\numberwithin{equation}{section}
\theoremstyle{plain}
\newtheorem{thm}{Theorem}[section]
\newtheorem{prop}[thm]{Proposition}
\newtheorem{lemma}[thm]{Lemma}
\theoremstyle{definition}
\newtheorem{dfn}[thm]{Definition}
\newtheorem{ex}[thm]{Example}
\theoremstyle{remark}
\newtheorem{rem}[thm]{Remark}
\newcommand{\C}{\mathbb C}
\newcommand{\N}{\mathbb N}
\newcommand{\R}{\mathbb R}
\newcommand{\Z}{\mathbb Z}
\def\rk{\operatorname{rank}}
\def\dim{\operatorname{dim}}
\def\grad{\operatorname{grad}}
\begin{document}
\title[On linear deformations of Brieskorn singularities]{On linear deformations of Brieskorn singularities of two variables into generic maps}
\author[K.Inaba]{Kazumasa Inaba}
\address[K.Inaba]{Mathematical Institute, Tohoku University, Sendai, 980-8578, Japan}
\email{sb0d02@math.tohoku.ac.jp }
\author[M.Ishikawa]{Masaharu Ishikawa}
\address[M.Ishikawa]{Mathematical Institute, Tohoku University, Sendai, 980-8578, Japan}
\email{ishikawa@m.tohoku.ac.jp}
\author[M.Kawashima]{Masayuki Kawashima}
\address[M.Kawashima]{Department of Mathematics, Tokyo University of Science, 1-3 Kagurazaka, Shinjuku-ku, Tokyo 
162-8601, Japan}
\email{kawashima@ma.kagu.tus.ac.jp}
\author[T.T.Nguyen]{Tat Thang Nguyen}
\address[T.T.Nguyen]{Institute of Mathematics, Vietnam Academy of Science and Technology, 10307 Hanoi, Vietnam}
\curraddr{Mathematical Institute, Tohoku University, Sendai, 980-8578, Japan}
\email{ntthang@math.ac.vn}
\keywords{stable map, higher differential, mixed polynomial}
\subjclass[2010]{Primary 57R45; Secondary 58C27, 14B05}

\begin{abstract}
In this paper, we study deformations of Brieskorn polynomials of two variables 
obtained by adding linear terms consisting of the conjugates of complex variables
and prove that the deformed polynomial maps have only indefinite fold and cusp singularities in general.
We then estimate the number of cusps appearing in such a deformation.
As a corollary, we show that a deformation of a complex Morse singularity with real linear terms
has only indefinite folds and cusps in general and the number of cusps is $3$.
\end{abstract}

\maketitle

\section{Introduction}

Let $f:\C^2\to\C$ be a complex polynomial map of two variables $(z,w)$ with isolated singularity at the origin
and $f_t:\C^2\to\C$, $t\in [0,1]$, be a family of polynomial maps such that $f_0=f$, i.e., a deformation of $f$.
A deformation of $f$ decomposes the singularity of $f$ at the origin into several smaller singularities,
especially into complex Morse singularities for a generic choice of the deformation.
Such a deformation is useful to understand the topology of the singularity of $f$ at the origin.
For example, from the deformations in~\cite{acampo, acampo2, GZ, GZ2, GZ3} 
we can describe the Dynkin diagram of the singularity and obtain the monodromy matrix of its Milnor fibration explicitly.

We now regard $f$ as a real polynomial map. 
A smooth map is called a {\it generic map} if it is transversal, in the jet-space, to certain submanifolds 
called Thom-Boardman singularities~\cite{boardman}. 
Usually, a normal crossing condition (Condition NC in~\cite[p. 157]{golubitsky}) is required in addition.
It is known that the set of generic maps is a dense subset in the space of smooth maps if the source manifold is compact,
see~\cite{mather2}.
In the case of smooth maps from $\R^4$ to $\R^2$, a smooth map is generic
with ignoring the normal crossing condition
 if and only if it has only fold and cusp singularities.
In this paper, we are interested in deforming a smooth map into a generic one in this sense.
To avoid confusion, we call such a map an {\it excellent map}, which is a terminology used in~\cite{whitney}
for maps between 2-planes, and also used for maps from higher dimensional manifolds later, e.g. in~\cite{saeki}.

We say that a deformation of $f$ is {\it linear} if it is given in the form
\begin{equation}\label{eq_intro}
   f_t(z,w)=f(z,w)+a_1z+b_1w+a_2\bar z+b_2\bar w,
\end{equation}
where $a_1,b_1,a_2,b_2$ are analytic functions with variable $t\in\R$ which vanish at $t=0$.
The variables $\bar z$ and $\bar w$ are the complex conjugates of $z$ and $w$ respectively, which are needed since we
are studying real deformations.
In this paper, we study linear deformations of Brieskorn polynomials of form~\eqref{eq_intro} with $a_1\equiv b_1\equiv 0$.

%

\begin{thm}\label{thm1}
Let $f(z,w)=z^p+w^q$ be a Brieskorn polynomial with $p,q\geq2$.
If $a,b\in\C$ are generic then there exists a linear deformation $f_t(z,w)$ of $f$
which consists of excellent maps for $t\in (0,1]$
and satisfies $f_1(z,w)=f(z,w)+a\bar z+b\bar w$.
Moreover, such an excellent map has only indefinite fold and cusp singularities.
\end{thm}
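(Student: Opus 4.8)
The plan is to regard $f_1$ as a real map $\C^2=\R^4\to\R^2=\C$ and to classify its singular points directly; by the characterisation of excellent maps $\R^4\to\R^2$ recalled in the introduction it then suffices to show that every singular point is a fold or a cusp. Using the Wirtinger derivatives $f_z=pz^{p-1}$, $f_w=qw^{q-1}$, $f_{\bar z}=a$, $f_{\bar w}=b$, a point is a (real) singular point of $f_1$ exactly when $df_1$ has real rank $1$, i.e.\ when $f_z=\lambda\overline{f_{\bar z}}$ and $f_w=\lambda\overline{f_{\bar w}}$ for some $\lambda$ with $|\lambda|=1$; in particular $f_{\bar z}=a\ne0$ rules out corank-$2$ points. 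Eliminating $\lambda$, the singular set $S$ is cut out by $|z|^{p-1}=|a|/p$, $|w|^{q-1}=|b|/q$ and $p\bar b\,z^{p-1}=q\bar a\,w^{q-1}$, which for $a,b\ne0$ is a disjoint union of smoothly embedded circles avoiding the coordinate axes; the tangent line to $S$ at a singular point is $L:=\R\cdot(iz/(p-1),\,iw/(q-1))$.

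Next, at a singular point with parameter $\lambda$ I fix a square root $\mu$ of $\lambda$ and set $V:=\operatorname{Im}(\bar\mu f_1)$. Near that point $f_1$ is $\mathcal A$-equivalent to $\bigl(\operatorname{Re}(\bar\mu f_1),V\bigr)$, whose first component is a submersion with kernel $K=\ker df_1$ (a $3$-plane) and whose second has a critical point, so the singularity type is governed by $\Hess V$ on $K$. The main observation is that $a\bar z+b\bar w$ is $\R$-linear and so contributes nothing to $\Hess V$; hence $\Hess V=\Hess\operatorname{Im}\!\bigl(\bar\mu(z^p+w^q)\bigr)$, which is block diagonal in the $z$- and $w$-planes, each block being the Hessian of the imaginary part of a holomorphic function of one variable with nonvanishing second derivative on $S$, hence of signature $(1,1)$. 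Thus $\Hess V$ is nondegenerate of signature $(2,2)$ on $\R^4$. One checks that $L$ is exactly the $\Hess V$-orthogonal complement of $K$; hence $\Hess V|_K$ is nondegenerate (being then a form on a $3$-plane inside a $(2,2)$-form, necessarily \emph{indefinite}) unless $L\subset K$, in which case $\Hess V|_K$ has radical $L$ and $\Hess V$ on $K/L$ is still of signature $(1,1)$. The condition $L\subset K$ rewrites, using the explicit form of $L$, as the real-analytic equation $\operatorname{Im}\!\bigl(\bar\mu(\tfrac{p}{p-1}z^{p}+\tfrac{q}{q-1}w^{q})\bigr)=0$ on $S$, whose zero set is finite for generic $(a,b)$. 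So away from finitely many points every singular point is an indefinite fold.

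It remains to show that at each of those finitely many exceptional points $f_1$ has a cusp, and I expect this to be \textbf{the main obstacle}. The idea is to show that the plane curve $f_1|_S\colon S\to\C$ has an ordinary cusp there: for a local parametrisation $\gamma$ of $S$ one needs $\tfrac{d^2}{ds^2}(f_1\circ\gamma)\ne0$, linearly independent from $\tfrac{d^3}{ds^3}(f_1\circ\gamma)$, together with $dh\ne0$ for the defining function $h$ of the exceptional set; the transverse $(1,1)$-form found above then makes the cusp indefinite, and these nondegeneracies identify the point as an (indefinite) cusp rather than a more degenerate singularity. The point requiring care is that $\tfrac{d^3}{ds^3}(f_1\circ\gamma)$ picks up the curvature of $S$ through the term $3\,d^2f_1(\gamma',\gamma'')$ (this is why cusps already appear for $p=q=2$, even though $d^3f_1\equiv0$ there: $f_1|_S$ is then, up to reparametrisation, $\theta\mapsto c\,e^{3i\theta}(\bar a^{2}+\bar b^{2})+\cdots$, with three cusps). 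So one must organise the second- and third-order computation along $S$, obtain finitely many explicit conditions polynomial in $a,b,\bar a,\bar b$, and check that they are not identically satisfied, whereby the admissible set of $(a,b)$ becomes the complement of a proper real-analytic subset. Finally, for admissible $(a,b)$ one builds the deformation: the singular set of $f+ca\bar z+cb\bar w$ is a fixed radial rescaling of $S$, and the parameter $\lambda$ at corresponding points does not depend on $c$, so feeding this into the conditions above shows that only finitely many scalings $c\in(0,\infty)$ make $f+ca\bar z+cb\bar w$ fail to be excellent; hence for $(a,b)$ in a suitable open set the straight line $f_t(z,w)=f(z,w)+ta\bar z+tb\bar w$ consists, for all $t\in(0,1]$, of excellent maps (necessarily with only indefinite folds and cusps, by the above), and in general one joins $(0,0)$ to $(a,b)$ by an analytic path avoiding the non-excellence locus for $t\in(0,1]$. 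This last step is routine compared with the cusp analysis.
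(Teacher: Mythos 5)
Your fold analysis is a legitimate and in places more elegant alternative to the paper's route: the paper works entirely through Levine's higher differentials after normalizing $f_1$ to $P(u,v;\mu)=\mu(u^p+\bar u)+v^q+\bar v$, computing the Hessian $H$ of $\hat R=R-sQ$ in adapted coordinates and showing $\det H\ne 0$ off the locus $\phi(z_0)=0$ (Propositions~\ref{prop38}--\ref{prop39}), whereas your observation that the antiholomorphic linear terms do not contribute to $\Hess\operatorname{Im}(\bar\mu f_1)$, which is therefore block diagonal of signature $(2,2)$, gives both the fold condition and its indefiniteness in one stroke (the paper instead derives indefiniteness in Section~4.3 from Levine's absolute-index lemma). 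However, the proposal has a genuine gap exactly where you flag ``the main obstacle'': you never verify that the finitely many exceptional points where $L\subset K$ are cusps rather than worse degeneracies. This is not a routine nondegeneracy check; it is the core of the paper's proof. One must compute the third-order invariant along $S$ (the paper's $\psi(z_0)$ in Proposition~\ref{prop311}, a trigonometric polynomial in the $\Theta_i$), and then prove that the two conditions ``degenerate Hessian'' ($\phi(z_0)=0$) and ``degenerate third differential'' ($\psi(z_0)=0$) have no common solution for generic $\mu$. The paper does this via Chebyshev polynomials and a resultant argument (Proposition~\ref{prop312}), and it must also separately rule out degeneracies in the boundary cases $k_1=0$ and $\hat R_{\theta_1'\theta_1'}(z_0)=0$ (Lemmas~\ref{lemma313}--\ref{lemma314}), which your coordinate-free setup does not automatically avoid. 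Until the analogue of the $\phi=\psi=0$ incompatibility is actually established, the statement that generic $(a,b)$ yields an excellent map is unproved.

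A secondary but real problem is the final deformation step. The singular set of $f+ca\bar z+cb\bar w$ is \emph{not} a uniform rescaling of $S$: $|z|^{p-1}=c|a|/p$ and $|w|^{q-1}=c|b|/q$ scale with different exponents, and consequently the normalized modulus $|\mu|$ (equivalently the relative size of the two blocks in your Hessian, and the coefficient in the cusp equation) genuinely depends on $c$ unless $p=q$. So excellence at $c=1$ does not propagate along the straight line $t\mapsto(ta,tb)$, and your claim that ``the parameter $\lambda$ at corresponding points does not depend on $c$'' does not rescue this. The paper's fix is to take $a_2(t)=at^{(p-1)q}$ and $b_2(t)=bt^{p(q-1)}$ (Lemma~\ref{lemma300}), which makes every $f_t$, $t\in(0,1]$, coordinate-equivalent to the \emph{same} normalized map $P(u,v;\mu)$; your fallback of choosing an analytic path avoiding the non-excellence locus could be made to work, but as written it is asserted rather than proved.
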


In the assertion, ``$a,b\in\C$ are generic'' means that $a,b\in\C$ lie outside the union of 
zero-sets of a finite number of equations of the polar coordinates of $a$ and $b$.
See Remark~\ref{generic_ab} for more precise explanation.

Next, we observe the number of cusps of an excellent map obtained as a linear deformation of $f(z,w)=z^p+w^q$
in Theorem~\ref{thm1}, whose existence is confirmed in that theorem.

\begin{thm}\label{thm2}
Let $f(z,w)=z^p+w^q$ be a Brieskorn polynomial with $p\geq q \geq 2$.
Suppose that $f(z,w)+a\bar z+b\bar w$ is an excellent map. Then the number of cusps of this map belongs to
$[(p+1)(q-1),\;(p-1)(q+1)]$.
In particular, if $p=q$ then the number of cusps is $p^2-1$.
\end{thm}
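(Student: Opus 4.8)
\emph{Proof plan.} The strategy is to describe the singular set $\Sigma$ of $g:=f+a\bar z+b\bar w$ explicitly, reduce the cusp condition along $\Sigma$ to a single scalar equation, and count its solutions by a winding‑number argument on each component of $\Sigma$. With $\partial g=(pz^{p-1},qw^{q-1})$ and $\bar\partial g=(a,b)$, a point of $\C^2$ is a singular point of the mixed map $g\colon\R^4\to\R^2$ exactly when $\overline{\bar\partial g}=\lambda\,\partial g$ for some $\lambda$ with $|\lambda|=1$, so $\Sigma=\{\,pz^{p-1}=\bar a\bar\lambda,\ qw^{q-1}=\bar b\bar\lambda,\ |\lambda|=1\,\}$. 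For nonzero $a,b$ this is the intersection of the torus $\{|z|=r_a,\ |w|=r_b\}$, $r_a=(|a|/p)^{1/(p-1)}$, $r_b=(|b|/q)^{1/(q-1)}$, with $\{(p-1)\arg z-(q-1)\arg w=\text{const}\}$; it is a smooth $1$‑manifold, it equals $\Sigma^1$ (there are no corank‑$2$ points since $\bar\partial g\ne0$), and it has $d:=\gcd(p-1,q-1)$ circle components, each parametrizable by an angle $\tau\in\R/2\pi\Z$ along which $\arg z$ winds $\tfrac{q-1}{d}$ times and $\arg w$ winds $\tfrac{p-1}{d}$ times.

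Since $\ker dg_P$ is $3$‑dimensional along $\Sigma$, a point $P\in\Sigma$ is a cusp iff $T_P\Sigma\subset\ker dg_P$, i.e. $dg_P(V)=0$ for $V$ tangent to $\Sigma$. Taking $V=(iz/(p-1),\,iw/(q-1))$ from the parametrization and substituting the defining relations, one finds $dg_P(V)=i(\bar\lambda t-\bar t)$ with $t:=\tfrac{\bar a z}{p-1}+\tfrac{\bar b w}{q-1}$, so $P$ is a cusp iff $\lambda\bar t=t$, equivalently $\arg\lambda\equiv2\arg t\pmod{2\pi}$; for an excellent map all such $P$ are nondegenerate cusps. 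On one component write $\lambda=e^{i\theta}$ and $t=\alpha e^{i\mu}+\beta e^{i\nu}$, where $\alpha=\tfrac{|a|r_a}{p-1}$, $\beta=\tfrac{|b|r_b}{q-1}$ and $\mu,\nu$ are affine in $\tau$; the cusps there are the zeros modulo $2\pi$ of $H(\tau):=\widetilde{\arg\lambda}(\tau)-2\widetilde{\arg t}(\tau)$, which is a lift of a circle map of degree $\delta=-\tfrac{(p-1)(q-1)}{d}-2\,w(t)$, with $w(t)$ the winding number of $t$ around the component.

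For generic $a,b$ one has $\alpha\ne\beta$, so the larger summand of $t$ dominates and $w(t)$ equals $\tfrac{q-1}{d}$ if $\alpha>\beta$ and $\tfrac{p-1}{d}$ if $\beta>\alpha$; thus $\sum|\delta|$ over the $d$ components equals $(p+1)(q-1)$ in the first case and $(p-1)(q+1)$ in the second, and since a circle map has at least $|\delta|$ zeros and $(p+1)(q-1)\le(p-1)(q+1)$ (as $p\ge q$), the number of cusps is at least $(p+1)(q-1)$. For the upper bound, differentiating gives, when $\alpha>\beta$, $H'(\tau)=-\tfrac{(p+1)(q-1)}{d}-\tfrac{2(p-q)}{d}\,\rho(\tau)$ with $\rho(\tau)=\operatorname{Re}(1+\tfrac{\alpha}{\beta}e^{i(\mu-\nu)})^{-1}$, where $\mu-\nu$ makes $\tfrac{p-q}{d}$ turns per component; hence $H'$ vanishes only where $\rho$ attains a fixed level, at most twice per turn of $\mu-\nu$, giving $H$ at most $\tfrac{p-q}{d}$ local maxima on each component. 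Because $t=\alpha e^{i\mu}(1+(\beta/\alpha)e^{i(\nu-\mu)})$ keeps $\arg t-\arg(\alpha e^{i\mu})\in(-\tfrac\pi2,\tfrac\pi2)$, each such ``bump'' of $H$ has height $<2\pi$ and produces at most one extra zero, so the number of cusps is at most $(p+1)(q-1)+2(p-q)=(p-1)(q+1)$. If instead $\beta>\alpha$, the analogue $H'(\tau)=-\tfrac{(p-1)(q+1)}{d}+\tfrac{2(p-q)}{d}\,\rho'(\tau)$ with $\rho'=\operatorname{Re}(\alpha e^{i\mu}/t)\le\tfrac{\alpha}{\alpha+\beta}<\tfrac12$ forces $H'<0$ everywhere (since $(p-1)(q+1)>p-q$), so $H$ is monotone and the count is exactly $(p-1)(q+1)$; and when $p=q$ the two summands of $t$ rotate at equal speed, $|t|$ is constant, $H$ is monotone, and one gets exactly $p^2-1$ cusps.

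The step I expect to be the real obstacle is the upper bound: one must control both the number and the height of the ``bumps'' by which the circle map $H$ fails to be monotone, and it is precisely the arithmetic identity $(p+1)(q-1)+2(p-q)=(p-1)(q+1)$ that makes this crude count of extra cusps land exactly on the claimed endpoint. Putting the cusp condition into the clean form $\lambda\bar t=t$ also needs some care with the mixed differential, and along the way one should verify that the hypothesis of excellence rules out the degenerate possibilities ($a$ or $b$ zero, $\alpha=\beta$, or $t$ vanishing somewhere on $\Sigma$).
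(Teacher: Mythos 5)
Your core reduction is sound: identifying cusps of an excellent map with the critical points of $g|_{\Sigma}$, and the identity $dg_P(V)=i(\bar\lambda t-\bar t)$ with $t=\tfrac{\bar a z}{p-1}+\tfrac{\bar b w}{q-1}$, is correct (expanding $\bar\lambda t-\bar t=2ie^{-i\arg\lambda/2}\bigl(\alpha\sin(\mu-\tfrac{\arg\lambda}{2})+\beta\sin(\nu-\tfrac{\arg\lambda}{2})\bigr)$ recovers exactly the paper's two-term sine equation $T(\theta;|\mu|)=0$ from Section~4). Where you genuinely diverge is the counting. The paper fixes the equation $T=(-1)^k|\mu|\sin(n\theta+c_k')+C\sin(m\theta)=0$ and proves (Lemma~\ref{lemma41}) that its number of roots is monotone decreasing in $|\mu|$, so the count is pinched between the two limiting values $2mr=(p-1)(q+1)$ and $2nr=(p+1)(q-1)$; you instead get the lower bound from the degree of the circle map $e^{iH}$ and the upper bound from a total-variation/bump count. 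Your route is more self-contained at each fixed $(a,b)$, and as a bonus gives the mod~$2$ constancy of the cusp number mentioned in the introduction; the paper's monotonicity argument has the advantage of treating all amplitude ratios uniformly in one stroke.

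That uniformity is exactly where your proposal has genuine gaps. First, the balanced case $\alpha=\beta$ (for $p>q$) cannot be dismissed by excellence: it is a single real condition on $|a|,|b|$ (in the paper's normalization, $|\mu|=C$), independent of the genericity that makes the map excellent. For $p=3$, $q=2$ the balanced value is $|\mu|=\sqrt{3}$, while the paper's Example~1 indicates the map remains excellent up to the beak-to-beak transition at $|\mu|=\tfrac{3\sqrt{3}}{2}$; so excellent maps with $\alpha=\beta$ do occur. There $t$ has zeros on $\Sigma$, its winding number $w(t)$ is undefined, the reformulation $\arg\lambda\equiv 2\arg t$ misses the cusps at $t=0$, and both your degree computation and your bound $\rho'<\tfrac12$ break down; this case needs its own argument (e.g.\ writing $t=2\alpha\cos(\tfrac{\mu-\nu}{2})e^{i(\mu+\nu)/2}$ and redoing the count, or a limiting argument from $\alpha\ne\beta$). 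Second, $ab=0$: for $q\ge 3$ excellence does fail (the paper shows every singular point would have to be a cusp), but for $q=2$ the paper's Section~4.2 does not exclude excellence and instead computes $p+1=(p+1)(q-1)$ cusps directly; moreover your description of $\Sigma$ as $\{|z|=r_a,\ |w|=r_b\}$ degenerates to $\{w=0\}$ there, so "rule it out by excellence" is not an available fix and a separate computation is required. The main case $ab\ne 0$, $\alpha\ne\beta$ of your argument appears correct (including the arithmetic $(p+1)(q-1)+2(p-q)=(p-1)(q+1)$ and the fact that nondegeneracy of the cusps excludes tangential solutions, which your crossing count implicitly needs), but as stated the proof does not cover all excellent maps of the given form.
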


There are related results about the number of cusps appearing in the versal deformations of
complex isolated singularities, see~\cite{greuel1, greuel2}.

If $p=q=2$ then equation~\eqref{eq_intro} can be modified as
\[
\left(z+\frac{a_1}{2}\right)^2+\left(w+\frac{b_1}{2}\right)^2+a_2\bar z+b_2\bar w-\frac{(a_1^2+b_1^2)}{4}.
\]
This is in the form $z^2+w^2+a\bar z+b\bar w$ up to translation of the coordinates and
addition of a constant term. 
Therefore, the results in Theorem~\ref{thm1} and~\ref{thm2} hold for ``any'' linear deformations. 

%
From Theorem~\ref{thm2}, we can conclude that
the number of cusps in a  linear deformation of $f(z,w)=z^2+w^2$ into an excellent map is $3$.
Furthermore, in this case, we can explicitly describe the image of the set of singularities of the deformed map as follows.

\begin{thm}\label{cor4}
Let $f_t$ be a linear deformation of $f(z,w)=z^2+w^2$ into excellent maps. 
Then 
the set of singular values of $f_t$, $t\in (0,1]$, in $\R^2$ is a scaling and rotation of the curve 
given by the map $h:S^1\to\R^2$ defined by
\[
   h(\theta)=e^{2i\theta}+2e^{-i\theta}.
\]
This curve is a simple closed $3$-cuspidal curve as shown in Figure~\ref{fig0}.
The map $f_t$, $t\in (0,1]$, restricted to the set of singularities is injective and hence the number of cusps is $3$.
\end{thm}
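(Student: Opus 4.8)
The plan is to reduce to an explicit model and then compute everything by hand. As observed just after Theorem~\ref{thm2}, every linear deformation of $z^2+w^2$ agrees, after translating the source coordinates and adding a constant to the target, with one of the form $f_t(z,w)=z^2+w^2+a\bar z+b\bar w$ for some $(a,b)=(a(t),b(t))\in\C^2$; since neither operation changes the shape of the singular value set or the number of cusps, I may assume $f_t$ has this form. Excellence forces $(a,b)\neq(0,0)$ (otherwise $f_t=f$, which is not excellent) and, as the computation below reveals, also $a^2+b^2\neq0$ (otherwise the singular value set collapses to a single point).

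First I would identify the singular set. Recall that $(z,w)$ is a critical point of $f_t$, regarded as a real map $\R^4\to\R^2$, exactly when $\overline{\bar\partial f_t}=\lambda\,\partial f_t$ for some $\lambda\in\C$ with $|\lambda|=1$. Since $\partial f_t=(2z,2w)$ and $\bar\partial f_t=(a,b)$, writing $\lambda=e^{i\theta}$ this gives $z=\tfrac{\bar a}{2}e^{-i\theta}$ and $w=\tfrac{\bar b}{2}e^{-i\theta}$, so the singular set $S$ is the circle $\theta\mapsto\bigl(\tfrac{\bar a}{2}e^{-i\theta},\tfrac{\bar b}{2}e^{-i\theta}\bigr)$, embedded in $\C^2$ because $(a,b)\neq(0,0)$. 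Restricting $f_t$ to this circle and putting $c=a^2+b^2=re^{i\phi}$ with $r>0$ gives
\[
   f_t\Bigl(\tfrac{\bar a}{2}e^{-i\theta},\tfrac{\bar b}{2}e^{-i\theta}\Bigr)=\frac{\bar c}{4}e^{-2i\theta}+\frac{c}{2}e^{i\theta},
\]
and an affine reparametrisation of $S^1$ rewrites the right-hand side as $\tfrac r4 e^{i\phi/3}\bigl(e^{2i\theta}+2e^{-i\theta}\bigr)$. Hence the singular value set of $f_t$ is precisely the scaling by $r/4$ and rotation by $\phi/3$ of the image of $h$, and $f_t|_S$ is $h$ precomposed with a bijection of $S^1$ and postcomposed with that similarity.

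Next I would analyse $h(\theta)=e^{2i\theta}+2e^{-i\theta}$, which after the substitution $\theta\mapsto-\theta$ is the classical three--cusped hypocycloid. Its velocity $h'(\theta)=2i(e^{2i\theta}-e^{-i\theta})$ vanishes precisely when $e^{3i\theta}=1$, i.e. at $\theta=0,\tfrac{2\pi}{3},\tfrac{4\pi}{3}$, and the Taylor expansion $h(\theta)=3-3\theta^2-i\theta^3+O(\theta^4)$ at $\theta=0$ shows that $h''(0)$ and $h'''(0)$ are $\R$--linearly independent, so the image has an ordinary semicubical cusp there; the other two cusps follow from the symmetry $h(\theta+\tfrac{2\pi}{3})=e^{-2\pi i/3}h(\theta)$. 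For injectivity, if $h(\theta_1)=h(\theta_2)$ with $u=e^{i\theta_1}$ and $v=e^{i\theta_2}$ distinct on the unit circle, then cancelling $u-v$ leaves $uv(u+v)=2$, which is impossible since $|uv(u+v)|=|u+v|<2$ whenever $u\neq v$. Thus $h$ is a continuous injection of $S^1$, so its image, and hence the singular value set of $f_t$, is a simple closed curve smooth away from exactly three cusps, and $f_t|_S$ is injective.

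Finally I would count cusps. By excellence the singular set of $f_t$ is a $1$--manifold carrying only indefinite fold points, whose images sweep out immersed smooth arcs of the singular value set, together with isolated cusp points, each mapping to an ordinary cusp of that set; since $f_t|_S$ is injective, the cusps of the singular value curve correspond bijectively to the cusp points of $f_t$. As that curve is a similarity copy of the three--cusped hypocycloid it has exactly three cusps, so $f_t$ has exactly three, in agreement with Theorem~\ref{thm2} for $p=q=2$. The step requiring the most care is this last bijection: it uses both the injectivity of $f_t|_S$ (reduced above to that of $h$) and the local normal forms of indefinite folds and cusps provided by excellence; the remaining steps are routine computations.
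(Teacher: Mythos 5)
Your proposal is correct and follows essentially the same route as the paper: parametrize the singular circle via the criterion $\overline{\bar\partial f_t}=\lambda\,\partial f_t$, restrict $f_t$ to it, and rewrite the result as a similarity applied to $h(\theta)=e^{2i\theta}+2e^{-i\theta}$, then count the zeros of $h'$. Your explicit injectivity argument (reducing $h(\theta_1)=h(\theta_2)$ to $uv(u+v)=2$ with $|u|=|v|=1$) and the Taylor-expansion check that the cusps are ordinary are welcome elaborations of steps the paper leaves as ``obvious,'' but they do not change the method.
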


\begin{figure}[htbp]
\begin{center}
  \includegraphics[scale=1.0]{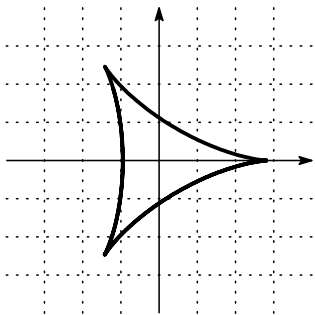}
  \caption{The curve given by $h(\theta)=e^{2i\theta}+2e^{-i\theta}$.\label{fig0}}
\end{center}
\end{figure}



We will use the higher differentials of H.~Levine in~\cite{levine} to prove Theorem~\ref{thm1},
which give a criterion to determine if a smooth map is an excellent map or not.
%
It is important to remark that the mappings studied in Theorem~\ref{thm1} are improper and
we do not know what happens near the infinity. Therefore the ordinary stability analysis of differentiable mappings 
cannot be applied directly. 
It is very hard to apply Levine's criterion to polynomial maps which are explicitly given.
We could do this because we know, in advance, that their cusps are determined by the equation $T=0$ which will be explained 
in Section~4. (Compare this equation with $\phi(z_0)$ in Proposition~\ref{prop38}.)
It may be possible to show the same claim using other techniques, but, since the equation $T=0$ 
contains trigonometric functions, we guess that the difficulty of the calculation 
will be more or less the same.
In this sense, our direct calculation performed to prove the theorem is indispensable.
The assumption that the polynomial is of Brieskorn type is necessary to perform our calculation,
otherwise the defining equation of cusps will become more complicated so that we cannot apply Levine's criterion practically.

Remark also that the topological studies of broken Lefschetz fibrations, initiated by 
the works of Auroux-Donaldson-Katzarkov~\cite{adk} and O.Saeki~\cite{saeki},
is related to our interest.
For example, the linear deformation in Theorem~\ref{cor4} appears in~\cite[Move 4 in p.292]{lekili}
as a typical move which produces a broken Lefschetz fibration.
The observation in this paper shows concretely that such a linear deformation is really an excellent map 
for a generic choice of the parameters.

In the recent work~\cite{DF} of N.~Dutertre and T.~Fukui,
they gave several formulas of Euler characteristics for singularities of stable maps 
under the assumption that the maps are locally trivial at infinity.
Especially, it follows from their result that 
the number of cusps of stable and locally trivial at infinity maps appearing in a small deformation in our setting
is constant modulo $2$.
This phenomenon can be seen in examples given in the last section in this paper.

The paper is organized as follows. Section~2 is for preliminaries.
We briefly introduce the higher differentials of H.~Levine in~\cite{levine} in Section~2.1, 
and then explain how to use it in Section~2.2. This is the main technique in this paper,
and the rest is full of hard calculation.
We then prove the first assertion in Theorem~\ref{thm1} in Section 3,
Theorem~\ref{thm2} and the second assertion in Theorem~\ref{thm1} in Section 4, 
and finally prove Theorem~\ref{cor4} in Section~5.
In Section 6, we close the paper with presenting two examples of linear deformations.

The second and last authors are supported by the Japan Society for the Promotion of Science (JSPS) 
Postdoctoral Fellowship for Foreign Researchers' Grant-in-Aid 25/03014.
The last author is also partially supported by Vietnam National Foundation for Science and Technology Development (NAFOSTED) 
grant 101.04-2014.23 and the Vietnam Academy of Science and Technology (VAST).


\section{Preliminaries}

\subsection{Higher differentials}

Let $f: X\to Y$ be a smooth map between an $n_X$-dimensional manifold $X$ and 
an $n_Y$-dimensional manifold $Y$.
Let $j^rf(p)$ denote the $r$-jet of $f$ at $p\in X$ and let $J^r(f):X\to J^r(X,Y)$ be the map
which maps $p\in X$ to $j^rf(p)$, where $J^r(X,Y)$ is the set of $r$-jets.
Denote by $df$ the induced map from $TX$ to $f^{-1}(TY)$ and $df_p=df|{T_pX}$ for $p\in X$,
where $TX$ is the tangent bundle of $X$,
$T_pX$ is the tangent space of $X$ at $p$ and
$f^{-1}(TY)$ is the vector bundle over $X$ whose fiber at $p\in X$
is $T_{f(p)}Y$. 

In this section, we will introduce higher differentials of $f$ according to~\cite{levine},
by which we interpret the transversality of $J^1(f)$ with $S_i(X, Y)$ and of $J^2(f)$ with $S_{i,h}(X, Y)$.
Here $S_i(X,Y)$ is defined as
\[
   S_i(X,Y)=\{j^1f(p)\in J^1(X,Y)\mid \rk df_p=\min\{n_X, n_Y\}-i\}.
\]
For the definition of $S_{i,h}(X, Y)$, see \cite{levine2} and also \cite{golubitsky}.
We define
\[
   S_i(f)=\{p\in X \mid \rk df_p= \min(n_X, n_Y)- i\}.
\]
Note that $S_0(f)$ is the set of regular points of $f$ and $S(f)= \cup _{i\geq 1}S_i(f)$ is the set of singular points of $f$,
i.e., the points at which the differential of $f$ is not surjective.
Note also that the sets $S_{i_1,i_2,\cdots,i_k}(f)$ play important role in the study of stable/generic maps.
The sequence $\{i_1,i_2,\cdots,i_k\}$ is called the {\it Boardman symbol of order $k$}.
In this paper, since we are interested in determining if a singularity is a fold or a cusp, 
we only need the Boardman symbols up to order $3$.
The set $S_{i,h}(f)$ will be introduced after Proposition~\ref{prop22}
and the set $S_{i,h,k}(f)$ will be introduced in the end of this subsection.


For $p\in X$ and $q\in Y$, let $U$ and $V$ be coordinate neighborhoods of $p$ and $q$, respectively, such that 
$f(U)\subset V$.
Since $TX|U$ and $TY|V$ are trivial we can choose bases $\{u_i\}$ and $\{v_j\}$ of 
the sections of these restricted bundles. Let $\{u_i^{*}\}$ and $\{v_j^{*}\}$ be the associated dual bases
\[
\langle u_i, u_{i'}^*\rangle= \delta_{ii'},\quad \langle v_j, v_{j'}^*\rangle= \delta_{jj'},
\]
where $\langle\;,\;\rangle$ is the pairing of a vector space with its dual.

Let $w_j= v_j\circ f$ and $w^*_j= v^*_j\circ f$. 
Since $df$ is linear on each fiber, there are smooth functions $a_{ij}, i=1,\ldots, n_X$, $j=1,\ldots,n_Y$, 
on $U$ such that
\[
df=\sum_{i,j}a_{ij}u^*_i\otimes w_j,
\]
where
\[
(df(u_i))_p=\sum_{j=1}^{n_Y}a_{ij}(p)w_j(p).
\]

Set $E=TX|S_i(f)$ and $F= f^{-1}(TY)|S_i(f)$. 
Define $L$ and $G$ by the exactness of the sequence
\[
0 \longrightarrow L\longrightarrow E \overset{df}{\longrightarrow} F \overset{\pi_1}{\longrightarrow} G \longrightarrow 0.
\]
Note that $L$ (resp. $G$) is an $(n_X-k)$-dimensional (resp. $(n_Y-k)$-dimensional)
vector bundle over $S_i(f)$, where $k=\min\{n_X, n_Y\}-i$.
We denote the fibers of $L$ and $G$ at $p\in S_i(f)$ by $L_p$ and $G_p$ respectively.
Define the map $\varphi^1: E\to L^* \otimes F$, for each $p\in S_i(f)$, by
\begin{equation}\label{formula20}
\varphi^1_p(v)(l)=\sum_{i,j}\langle v, da_{ij}(p)\rangle\langle l, u^*_i(p)\rangle w_j(p),
\end{equation}
with $v\in T_pX$ and $l\in L_p$.
Then the second differential $d^2f: E\to L^* \otimes G$ of $f$ is defined as 
$d^2f_p(v)(l)= \pi_1(\varphi^1_p(v)(l))$.

\begin{prop}[\cite{levine}]
The map $d^2f$ is a well-defined bundle map and $d^2f|L$ is a section in $(L^* \odot L^*)\otimes G$, 
where $\odot $ represents the symmetric product.
\end{prop}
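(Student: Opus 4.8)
The plan is to establish the three assertions in turn. The essential content is the independence of the construction from the chosen local frames $\{u_i\}$ of $TX|U$ and $\{v_j\}$ of $TY|V$; once $d^2f$ is known to be well defined, smoothness (the ``bundle map'' claim) is formal, and the symmetry of $d^2f|L$ follows by computing $\varphi^1_p$ in a conveniently chosen frame.

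\medskip
\noindent\emph{Well-definedness.} First I would change the $TX$-frame, replacing $\{u_i\}$ by $u'_i=\sum_k c_{ik}u_k$ for an invertible matrix $c=(c_{ik})$ of smooth functions on $U$. One checks that the coefficients transform by $a'_{ij}=\sum_k c_{ik}a_{kj}$ and the dual coframe by $(u'_i)^{*}=\sum_m (c^{-1})_{mi}u^{*}_m$. Expanding $\langle v, da'_{ij}(p)\rangle$ by the Leibniz rule and substituting into~\eqref{formula20}, the term coming from $\langle v, c_{ik}\,da_{kj}(p)\rangle$ reproduces exactly $\varphi^1_p(v)(l)$ (the change of dual coframe cancels against $c$), while the term coming from $\langle v,dc_{ik}(p)\rangle\,a_{kj}(p)$ assembles into
\[
\sum_k \mu_k\, df_p\bigl(u_k(p)\bigr),\qquad
\mu_k=\sum_{i,m}\langle v,dc_{ik}(p)\rangle\,(c^{-1})_{mi}(p)\,l_m ,
\]
where $l_m=\langle l,u^{*}_m(p)\rangle$. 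Each $df_p(u_k(p))$ lies in $\operatorname{im}df_p=\ker\pi_1$, so applying $\pi_1$ kills this term and $d^2f_p=\pi_1\circ\varphi^1_p$ is unchanged. Changing instead the $TY$-frame to $v'_j=\sum_l d_{jl}v_l$, the analogous computation shows the extra terms produced by the Leibniz rule carry the factor $\sum_i a_{im}(p)\,l_i$, which is the $m$-th component of $df_p(l)=0$ since $l\in L_p=\ker df_p$; hence in that case even $\varphi^1_p$ itself is unchanged. Thus $d^2f$ does not depend on the chosen frames. Since $df\colon E\to F$ has locally constant rank along $S_i(f)$, the bundles $L=\ker df$ and $G=\operatorname{coker}df$ are smooth there, $\pi_1$ is a smooth bundle projection, and $\varphi^1$ is a universal polynomial expression in the smooth functions $a_{ij}$ and their first derivatives; therefore $d^2f$ is a smooth bundle map $E\to L^{*}\otimes G$.

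\medskip
\noindent\emph{Symmetry of $d^2f|L$.} Now I would exploit the independence just proved: fix $p\in S_i(f)$, choose coordinate charts $(x_1,\dots,x_{n_X})$ around $p$ and $(y_1,\dots,y_{n_Y})$ around $f(p)$, and take $u_i=\partial/\partial x_i$, $v_j=\partial/\partial y_j$. Writing $f=(f_1,\dots,f_{n_Y})$, one has $a_{ij}=\partial f_j/\partial x_i$, so for $v=\sum_k v_k\,\partial/\partial x_k$ and $l=\sum_i l_i\,\partial/\partial x_i$ in $T_pX$,
\[
\varphi^1_p(v)(l)=\sum_j\Bigl(\sum_{i,k}v_k\,l_i\,\frac{\partial^2 f_j}{\partial x_k\,\partial x_i}(p)\Bigr)w_j(p).
\]
By equality of mixed second partials this is symmetric under interchanging $v$ and $l$, and applying $\pi_1$ preserves the symmetry. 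Restricting the first argument to $L_p$, the $G_p$-valued bilinear form $(v,l)\mapsto d^2f_p(v)(l)$ on $L_p\times L_p$ is therefore symmetric, i.e.\ $d^2f|L$ is a section of $(L^{*}\odot L^{*})\otimes G$.

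\medskip
\noindent The only step requiring genuine care is the cancellation in the well-definedness argument: one must track precisely how $a_{ij}$ and the dual coframe transform and recognize that every term which fails to reproduce $\varphi^1_p(v)(l)$ is either a value of $df_p$, hence annihilated by $\pi_1$, or carries the factor $df_p(l)=0$. This is bookkeeping rather than a conceptual difficulty; granting it, the bundle-map property is automatic and the symmetry drops out of the coordinate-frame computation.
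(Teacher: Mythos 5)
Your argument is correct. Note that the paper does not prove this proposition at all --- it is quoted from Levine's \emph{Elimination of cusps} --- so there is no in-paper proof to compare against; your route (frame-independence of $\pi_1\circ\varphi^1$ by tracking how $a_{ij}$ and the dual coframe transform, with the surplus terms killed either by $\pi_1$ because they lie in $\operatorname{im}df_p$ or by the factor $df_p(l)=0$ for $l\in L_p$, followed by the coordinate computation reducing symmetry on $L_p\times L_p$ to equality of mixed partials) is precisely the standard one and is essentially Levine's. The only point worth making explicit is that $L$ and $G$ are smooth vector bundles over $S_i(f)$ because the rank of $df$ is constant there by the very definition of $S_i(f)$; with that remark your proof is complete.
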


The transversality of $J^1(f)$ to $S_i(X, Y)$ is determined by $d^2f$ as follows.

\begin{prop}[\cite{levine2, levine}]\label{prop22}
The map $J^1(f)$ is transversal to $S_i(X, Y)$ at $p\in S_i(f)$ if and only if the map
\[
d^2f_p: T_pX\to L_p^*\otimes G_p
\]
has rank $(n_X-k)(n_Y-k)= i(|n_X-n_Y|+i)$.
\end{prop}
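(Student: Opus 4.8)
The plan is to identify the second differential $d^2f_p$ with the composition of $d(J^1f)_p$ with the projection onto the normal bundle of $S_i(X,Y)$ in $J^1(X,Y)$, so that transversality of $J^1(f)$ to $S_i(X,Y)$ at $p$ becomes exactly the surjectivity of $d^2f_p$ onto $L^*_p\otimes G_p$, and then read off the rank condition by counting dimensions.

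First I would recall the local structure of $S_i(X,Y)$ inside $J^1(X,Y)$. Using the trivializations and dual bases $\{u_i\},\{v_j\}$ from the setup, a $1$-jet near $j^1f(p)$ is recorded by the matrix $(a_{ij})$, and $S_i(X,Y)$ is locally the set of such matrices of rank exactly $k=\min\{n_X,n_Y\}-i$; it is a submanifold of codimension $(n_X-k)(n_Y-k)$. At $\sigma=j^1f(p)\in S_i(X,Y)$ one may, after a linear change of the frames $\{u_i\}$, $\{v_j\}$ at the single point $p$, assume that $(a_{ij}(p))$ is block-diagonal with invertible upper-left $k\times k$ block and all remaining entries zero; then $L_p$ is spanned by the last $n_X-k$ vectors $u_i(p)$ and $G_p$ by the last $n_Y-k$ vectors $w_j(p)$. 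In these coordinates the vanishing of the Schur complement gives local defining equations for $S_i(X,Y)$, and a direct computation shows that its differential at $\sigma$ sends a variation $(b_{ij})$ of the matrix to its lower-right $(n_X-k)\times(n_Y-k)$ block. Hence the normal bundle of $S_i(X,Y)$ at $\sigma$ is canonically $\mathrm{Hom}(L_p,G_p)\cong L^*_p\otimes G_p$.

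Next I would compute $d(J^1f)_p$ modulo $T_\sigma S_i(X,Y)$. Since $J^1f$ records $(p,f(p),(a_{ij}(p)))$, the matrix part of $d(J^1f)_p(v)$ is the variation with entries $\langle v,da_{ij}(p)\rangle$. Composing with the normal projection described above — that is, restricting the first index to the kernel directions by pairing with $l\in L_p$ through $u^*_i(p)$, and projecting the output to $G_p$ by $\pi_1$ — reproduces precisely formula~\eqref{formula20} followed by $\pi_1$, i.e. the map $d^2f_p$. Well-definedness of this identification, in particular its independence of the chosen frames and coordinates, is already contained in the preceding proposition.

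Finally, transversality of $J^1(f)$ to $S_i(X,Y)$ at $p$ is equivalent to surjectivity of the normal projection of $d(J^1f)_p$, hence to surjectivity of $d^2f_p\colon T_pX\to L^*_p\otimes G_p$; since $\dim(L^*_p\otimes G_p)=(n_X-k)(n_Y-k)$, this is exactly the rank condition in the statement. The identity $(n_X-k)(n_Y-k)=i(|n_X-n_Y|+i)$ then follows from $k=\min\{n_X,n_Y\}-i$, with the cases $n_X\ge n_Y$ and $n_X\le n_Y$ differing only in which of $L$, $G$ carries the larger rank. I expect the main obstacle to be the bookkeeping in the middle step: matching the intrinsic bundle map $d^2f$, defined through the functions $a_{ij}$ without reference to a connection, with the concrete coordinate description of the derivative of the jet map transverse to $S_i(X,Y)$, and verifying that the two resulting identifications of the normal bundle of $S_i(X,Y)$ with $L^*\otimes G$ coincide.
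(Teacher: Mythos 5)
The paper gives no proof of Proposition~\ref{prop22}; it is quoted directly from Levine's work, and the argument you outline --- identifying the normal bundle of the rank stratum $S_i(X,Y)$ with $\mathrm{Hom}(L_p,G_p)\cong L_p^*\otimes G_p$ via the Schur complement and recognizing $d^2f_p$ as the normal component of $d(J^1f)_p$ --- is exactly the standard proof found in the cited sources. Your argument is correct, including the dimension count $(n_X-k)(n_Y-k)=i(|n_X-n_Y|+i)$, so there is nothing to compare against beyond noting that you have supplied the proof the paper omits.
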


Let $\psi$ be the restriction of $d^2f_p$ to $L_p$, 
then $\psi$ is represented by an $(n_X-k)\times (n_X-k)(n_Y-k)$ matrix. 
Assume that $J^1(f)$ is transversal to $S_i(X,Y)$ at $p\in S_i(f)$.
Denote
\[
   S_{i,h}(f)=\{p\in X\mid \rk \psi=n_X-k-h\}.
\]
Note that if $p\in S_{i,0}(f)$ then $p$ is a regular point of $f$ restricted on $S_i(f)$.
Using local coordinates we can check that
in a neighborhood of $p$ the set $S_i(f)$ is a submanifold of codimension $(n_X-k)(n_Y-k)$.
Let $U$ denote the neighborhood of $p$. Define the vector bundle $T$ on $S_i(f)\cap U$
by the exactness of the following sequence of the vector bundles on $S_i(f)\cap U$:
\[
0\longrightarrow T\longrightarrow E \overset{d^2f}{\longrightarrow} L^*\otimes G\longrightarrow 0.
\]
We can prove that $T$ is the tangent bundle to $S_i(f)\cap U$.

The third differential is defined on the subset $S_{i,h}(f)$. 
Assume that $J^1(f)$ is transversal to $S_i(X, Y)$. 
Define the vector bundles $R$ and $K$ on $S_{i, h}(f)$ by the exact sequence
\[
0\longrightarrow R\longrightarrow L 
\overset{d^2f}{\longrightarrow} L^*\otimes G \overset{\pi_2}{\longrightarrow} K\longrightarrow 0,
\]
where $\dim R_p=h$ and $\dim K_p= h(n_Y-k)+ (n_X-k-h)(n_Y-k-1)$.
Let $R^{\bot}$ be the annihilator of $R$ in $L^*$. 
Since $R$ is a subbundle of $L$ we have the exact sequence
\[
0\longrightarrow R^{\bot}\longrightarrow L^*\longrightarrow R^*\longrightarrow 0.
\]
Hence
\[
0\longrightarrow R^{\bot}\otimes G\longrightarrow L^*\otimes G\longrightarrow R^*\otimes G\longrightarrow 0
\]
is also exact. It is easy to check that $d^2f(L)\subset R^{\bot}\otimes G$. Then we have the commutative exact diagram
\[
\begin{CD}
L @>{d^2f}>> L^*\otimes G @>{\pi_2}>> K @>>> 0 \\
@VV{d^2f}V @VV{=}V @VV{\pi}V \\
R^{\bot}\otimes G @>>> L^*\otimes G @>>> R^*\otimes G @>>> 0.
\end{CD}
\]
The maps in the second square are surjective. That implies that
\[
R^*\otimes K\longrightarrow R^*\otimes R^*\otimes G\longrightarrow 0
\]
is exact. Consequently, we can define the bundle $R^*\odot K$ such that the sequence
\[
0\longrightarrow R^*\odot K\longrightarrow R^*\otimes K \overset{\gamma}{\longrightarrow} 
R^*\wedge R^*\otimes G\longrightarrow 0
\]
is exact, where $\gamma$ is defined by the composition
\[
R^*\otimes K\longrightarrow R^*\otimes R^*\otimes G\longrightarrow R^*\wedge R^*\otimes G.
\]

Now, recall that we have the projection maps
\[
   \pi_1: F\to G,\quad \pi_2: L^*\otimes G\to K,
\]
which induce
\[
\begin{split}
\pi_1'&: R^*\otimes L^*\otimes F\to R^*\otimes L^*\otimes G,\\
\pi_2'&: R^*\otimes L^*\otimes G\to R^*\otimes K.
\end{split}
\]
Let $\{z_1, \ldots, z_{n_X-k}\}$ be a part of a basis of sections in $E$ over the neighborhood $U$ of $p$ 
such that $\{z_j\mid S_i(f)\cap U\}$ is a basis for $L|U\cap S_i(f)$. We define $\varphi^2: T\to R^*\otimes L^*\otimes F$ by
\begin{equation}\label{formula21}
\varphi^2_x(t)(r\otimes l)= \sum_{i,j,m}\langle t, d(\langle z_i, da_{mj}\rangle)(x)\rangle
\langle l, z^*_m(x)\rangle\langle r, z_i^*(x)\rangle w_j(x)
\end{equation}
where $x\in U\cap S_i(f)$, $r\in R$ and $l\in L$.
Then the third differential $d^3f: T\to R^*\otimes K$ of $f$ is defined as $d^3f_x= \pi_2'\circ \pi_1'\circ \varphi^2_x$.

\begin{prop}[\cite{levine}]
The map $d^3f$ is well-defined over $S_{i,h}(f)$ and satisfies $d^3f(T)\subset R^*\odot K$.
\end{prop}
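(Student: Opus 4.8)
The plan is to run, one order higher, the local-coordinate computation that establishes the corresponding facts for $\varphi^1$ and $d^2f$. Fix $p\in S_{i,h}(f)$ together with coordinate neighborhoods $U$ of $p$ and $V\supset f(U)$ of $f(p)$, and choose the frames $\{u_a\}$ of $TX|U$ and $\{v_b\}$ of $TY|V$ adapted to $f$ along $S_i(f)\cap U$, so that on $S_i(f)\cap U$ the matrix $(a_{ab})$ is the identity on its first $k\times k$ block and zero otherwise. Then $df(E)=\ker\pi_1$ is spanned by $\{w_b\mid b\le k\}$, $G$ by $\{w_b\mid b>k\}$, and $L$ by $\{u_a\mid a>k\}$ restricted to $S_i(f)\cap U$, so the sections $\{z_1,\dots,z_{n_X-k}\}$ of the statement may be taken among the $u_a$ with $a>k$; moreover $d^2f(L)=\ker\pi_2$, and the tensored projections $\pi_1',\pi_2'$ become explicit. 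Since $\varphi^2_x(t)$ in \eqref{formula21} is $\R$-linear in $t\in T_x$, the composite $d^3f_x=\pi_2'\circ\pi_1'\circ\varphi^2_x$ is in any case a bundle homomorphism on $T$; what must be shown is (i) independence of all the above choices and (ii) $d^3f(T)\subset R^*\odot K$.

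For (i) I would check each admissible change separately, exactly as for $d^2f$ but carrying one extra derivative. A change of $\{v_b\}$ alters $\varphi^2_x(t)$ by a term valued in $R^*\otimes L^*\otimes df(E)$, which is killed by $\pi_1'$. A change of the $z_m$ by sections of $E$ vanishing identically on $S_i(f)\cap U$ alters each function $\langle z_m,da_{ab}\rangle$ only by a function vanishing on $S_i(f)\cap U$, whose differential contracted with $t\in T_x=T_x(S_i(f)\cap U)$ vanishes, so it has no effect at all. A change of $\{z_m\}$ by a linear substitution that still restricts to a basis of $L$ on $S_i(f)\cap U$, and a change of the remaining part of $\{u_a\}$, alter $\pi_1'\circ\varphi^2_x(t)$ only by terms valued in $R^*\otimes d^2f(L)$ — second-differential corrections produced by differentiating the substitution matrices and then contracting against $z^*_i$ and $z^*_m$ — and these are killed by $\pi_2'$. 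The three exact sequences defining $T$, $R$ and $K$ are precisely what is needed to absorb these three kinds of discrepancy.

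For (ii), unwinding the definition of $\gamma$ and of the map $\pi:K\to R^*\otimes G$ in the commutative diagram, the containment $d^3f(T)\subset R^*\odot K=\ker\gamma$ is equivalent to
\[
\pi_1\bigl(\varphi^2_x(t)(r_1\otimes r_2)\bigr)=\pi_1\bigl(\varphi^2_x(t)(r_2\otimes r_1)\bigr)\qquad\text{in }G_x
\]
for all $t\in T_x$ and $r_1,r_2\in R_x$. From \eqref{formula21}, after relabeling indices, the difference of the two sides equals $\sum_{i,j,m}\langle t,dg_{imj}(x)\rangle\,\langle r_1,z^*_i\rangle\langle r_2,z^*_m\rangle\,w_j$ with $g_{imj}=\langle z_i,da_{mj}\rangle-\langle z_m,da_{ij}\rangle$. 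For $j\le k$ the vector $w_j$ lies in $df(E)$ and is killed by $\pi_1$. For $j>k$ the symmetry of $d^2f|L$ — part of the already-established statement about $d^2f$, which with our choices reads $\langle z_i,da_{mj}\rangle=\langle z_m,da_{ij}\rangle$ on $S_i(f)\cap U$ — forces $g_{imj}$ to vanish on $S_i(f)\cap U$, so that its differential contracted with $t\in T_x=T_x(S_i(f)\cap U)$ vanishes. Hence the difference is zero and $\gamma\circ d^3f=0$, i.e. $d^3f(T)\subset R^*\odot K$.

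I expect the delicate step of (i) — the change of the $L$-frame extension $\{z_m\}$ — to be the main obstacle, because such a change simultaneously affects the section $z_i$ that is differentiated in \eqref{formula21}, the dual coframe $\{z^*_m\}$, and the structure functions $a_{mj}$, and one must see that after applying $\pi_1'$ the net discrepancy really is a $d^2f(L)$-valued term killed by $\pi_2'$, and not merely a lower-order error. Keeping the adapted normalization of $(a_{ab})$ consistent along the whole of $S_i(f)\cap U$, rather than only at $p$, is what makes this bookkeeping subtle; once it is carried out, both (i) and (ii) follow, in exact parallel with Levine's treatment of $d^2f$.
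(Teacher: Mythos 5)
The paper does not prove this proposition itself --- it is quoted from Levine's \emph{Elimination of cusps} --- and your outline reproduces essentially Levine's own argument: adapted frames along $S_i(f)\cap U$, absorption of the frame-change discrepancies by $\pi_1'$, by $\pi_2'$ (via $\ker\pi_2=d^2f(L)$), and by the vanishing of $T$-derivatives of functions that vanish on $S_i(f)\cap U$, together with the reduction of $d^3f(T)\subset R^*\odot K$ to the symmetry of $d^2f|L$. The outline is correct as far as it goes; the only part left as a plan rather than a completed proof is the bookkeeping for the change of the extension $\{z_m\}$, which you rightly flag as the delicate step.
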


The following fact can be proved by calculating the differential of the jet map 
$J^2f: X\to J^2(X, Y)$ and checking the transversal condition of $J^2(f)$ with $S_{i,h}(X, Y)$ 
(see \cite{levine} and \cite{golubitsky}).

\begin{prop}
The map $J^2(f)$ is transversal to $S_{i,h}(X, Y)$ at $p$ if and only if $d^3f_p:T_p\to R^*_p\odot K_p$ is surjective.
\end{prop}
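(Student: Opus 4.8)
The plan is to carry out the computation indicated just before the statement: pass to adapted local coordinates, write $S_{i,h}(X,Y)$ and its normal directions explicitly inside $J^2(X,Y)$, differentiate the jet map $J^2f$ at $p$, and match the resulting transversality condition with surjectivity of $d^3f_p$.

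First I would fix coordinate neighborhoods $U\ni p$, $V\ni f(p)$ with $f(U)\subset V$ and trivializing bases $\{u_i\}$, $\{v_j\}$ of $TX|U$, $f^{-1}(TY)|U$ as above, arranged so that along $S_i(f)\cap U$ the first $n_X-k$ sections $u_i$ span $L$, among them the last $h$ span $R$, and the first $n_Y-k$ sections $v_j$ descend to a basis of $G$, where $k=\min\{n_X,n_Y\}-i$. Then a $2$-jet at $q\in U$ is recorded by $(q,f(q),(a_{ij}),(b_{ii'j}))$ with $b_{ii'j}=b_{i'ij}$ the second-derivative slots, and $J^2f$ sends $q$ to $(q,f(q),(a_{ij}(q)),(\partial a_{i'j}/\partial x_i\,(q)))$. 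Since transversality of $J^1f$ to $S_i(X,Y)$ is assumed, by Proposition~\ref{prop22} the stratum $S_i(X,Y)$ is a submanifold near $j^1f(p)$, cut out by the entries of the lower block of $(a_{ij})$ after a Gauss-type normalization, and $S_i(f)\cap U$ is already a submanifold of $X$ with tangent bundle $T$, as recalled above; hence it suffices to restrict attention to $S_i(f)$ and to the directions $T_p\subset T_pX$, the complementary directions being automatically transverse by the order-$1$ hypothesis.

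Next I would present $S_{i,h}(X,Y)$ as the sublocus of $S_i(X,Y)$ on which the $(n_X-k)\times(n_X-k)(n_Y-k)$ matrix representing the intrinsic second differential — formed from the $b_{ii'j}$ with $L$-indices and projected to $G$ — has rank $n_X-k-h$, and identify its normal space at $j^2f(p)$, modulo the directions already controlled by transversality of $J^1f$ to $S_i(X,Y)$, with $R^*_p\odot K_p$; this uses only the linear algebra of the exact sequences defining $R$, $K$ and $R^*\odot K$ above, together with the observation that the pure second-derivative slots $b_{ii'j}$ of the jet space span exactly the non-symmetric complement $(R^*\otimes K)/(R^*\odot K)$ and therefore contribute nothing to the surjectivity question. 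For $t\in T_p$ I would then compute the normal component of $d(J^2f)_p(t)$ to $S_{i,h}(X,Y)$: by the chain rule, differentiating the rank-defining entries along $S_i(f)$ brings down third partial derivatives of the components of $f$ contracted with $t$, which is precisely the expression $\sum_{i,j,m}\langle t,d(\langle z_i,da_{mj}\rangle)(p)\rangle\langle l,z_m^*(p)\rangle\langle r,z_i^*(p)\rangle w_j(p)$ of formula~\eqref{formula21}; applying $\pi_1'$, then $\pi_2'$, and landing in the symmetric part $R^*_p\odot K_p$ yields exactly $d^3f_p$. Hence $J^2f$ is transversal to $S_{i,h}(X,Y)$ at $p$ if and only if $d^3f_p:T_p\to R^*_p\odot K_p$ is surjective.

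The main obstacle is this second step: producing a coordinate-free identification of the relative normal space of $S_{i,h}(X,Y)$ in $J^2(X,Y)$ with $R^*_p\odot K_p$ under which $d(J^2f)_p$, reduced modulo the $T_p S_i(f)$-directions, becomes literally $d^3f_p$. This forces one to track how the Gauss normalization used to cut out the rank conditions interacts with differentiation, to check that the symmetric/non-symmetric splitting of the jet-space directions is the one dictated by $R^*\odot K\hookrightarrow R^*\otimes K$, and to verify along the way that $d^3f=\pi_2'\circ\pi_1'\circ\varphi^2$ is independent of the chosen lifts $z_i$ of the basis of $L$ and of the trivializations, and that its image is symmetric — the last point reflecting the equality of mixed third partials, in parallel with the earlier statement that $d^2f|L$ is a section of $(L^*\odot L^*)\otimes G$. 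Once the coordinates and the exact sequences are in place, the remainder is the jet bookkeeping of \cite{levine} and \cite[Ch.~VI]{golubitsky}.
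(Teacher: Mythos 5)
The paper does not prove this proposition itself; it states only that the fact follows ``by calculating the differential of the jet map $J^2f$ and checking the transversal condition of $J^2(f)$ with $S_{i,h}(X,Y)$,'' deferring to Levine and Golubitsky--Guillemin, and your outline is exactly that computation: reduce via the first-order transversality hypothesis to the directions $T_p=T_pS_i(f)$, identify the relative normal space of $S_{i,h}(X,Y)$ inside the preimage of $S_i(X,Y)$ with $R_p^*\odot K_p$, and recognize the induced map as $d^3f_p$ through formula~\eqref{formula21}. So you are on the same route as the paper and its sources; the one inaccuracy is your parenthetical claim that the second-derivative slots $b_{ii'j}$ span the non-symmetric complement of $R^*\odot K$ in $R^*\otimes K$ --- the symmetric product enters rather because the corank-$h$ locus of the \emph{symmetric} map $d^2f|L$ has relative normal space equal to the symmetric part of $R^*\otimes K$ --- but this does not affect the viability of the plan.
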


If $J^2(f)$ is transversal to $S_{i,h}(X,Y)$ then $S_{i,h,k}(f)$ is defined as
\[
   S_{i,h,k}(f)=\{p\in S_{i,h}(f)\mid \text{$d(f|_{S_{i,h}(f)})$ drops rank by $k$}\},
\]
see~\cite[p.156]{golubitsky}.

\subsection{Excellent maps}


Suppose that $\dim X=4$ and $\dim Y=2$.
In this setting, an excellent map is defined as follows.

\begin{dfn}
A smooth map $f:X\to Y$ is called an {\it excellent map} if
for each point $p\in X$, there exist a system of local coordinates $x_1,x_2,x_3,x_4$ centered at $p$ 
on $X$ and a system of local coordinates $y_1, y_2$ centered at $f(p)$ on $Y$ such that
$f$ is locally described in one of the following form:
\begin{itemize}
\item[(1)] $(x_1,x_2,x_3,x_4)\mapsto (x_1,x_2)$,
\item[(2)] $(x_1,x_2,x_3,x_4)\mapsto (x_1,x_2^2+x_3^2+x_4^2)$,
\item[(3)] $(x_1,x_2,x_3,x_4)\mapsto (x_1,x_2^2+x_3^2-x_4^2)$,
\item[(4)] $(x_1,x_2,x_3,x_4)\mapsto (x_1,x_2^2\pm x_3^2+x_1x_4+x_4^3)$.
\end{itemize}
The point in case (1) is a regular point. The point in case (2), (3) and (4) is
called a {\it definite fold}, an {\it indefinite fold} and a {\it cusp}, respectively.
\end{dfn}


Excellent maps actually exist ``generically'' in $C^\infty(X,Y)$
because of the following interpretation in terms of transversality.
Let $S_1^2(X,Y)$ be the set of $j^2f(p)\in J^2(X,Y)$ such that $j^1(f)(p)\in S_1(X,Y)$,
$j^1(f)$ is transversal to $S_1(X,Y)$ at $p$, and 
$\rk d(f|_{S_1(f)})=\min\{\dim S_1(f), \dim Y\}-1$, which is the notation in~\cite{levine}.
This set is nothing but $S_{1,1}(X,Y)$ in Section~2.1.
Since we are studying the case where $\dim X=4$ and $\dim Y=2$, we have $\rk d(f|_{S_1(f)})=0$.

\begin{prop}[\cite{levine3, levine}]
A smooth map $f:X\to Y$ is an excellent map if and only if
\begin{itemize}
\item[(a)] $J^1(f)$ is transversal to $S_1(X, Y)$ and $S_2(X, Y)$, and
\item[(b)] $J^2(f)$ is transversal to $S_1^2(X, Y)$.
\end{itemize}
\end{prop}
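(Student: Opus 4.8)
We sketch the argument; the two implications are handled separately, and all but one step is routine bookkeeping.

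\emph{The models satisfy~(a) and~(b).} Let $p\in X$. If $f$ has the form~(1) near $p$, then $j^1f(p)$ lies in neither $S_1(X,Y)$ nor $S_2(X,Y)$ and $j^2f(p)\notin S_1^2(X,Y)$, so~(a) and~(b) hold trivially at $p$. If $f$ has the form~(2) or~(3) near $p$, write $f=(x_1,\,x_2^2+x_3^2\pm x_4^2)$: the first component keeps $\rk df\ge1$, so $S_2(f)=\emptyset$; $S_1(f)$ is the $x_1$-axis; the second differential $d^2f_p$ has the maximal rank $(n_X-k)(n_Y-k)=3$ demanded by Proposition~\ref{prop22}, so $J^1(f)$ is transversal to $S_1(X,Y)$; and the restriction $\psi$ of $d^2f$ to $L$ is the constant nondegenerate Hessian $\operatorname{diag}(2,2,\pm2)$, so $S_{1,1}(f)=\emptyset$ and~(b) is vacuous. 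If $f$ has the form~(4) near $p$, write $f=(x_1,\,x_2^2\pm x_3^2+x_1x_4+x_4^3)$: again $S_2(f)=\emptyset$, while $S_1(f)=\{x_2=x_3=0,\ x_1=-3x_4^2\}$ is a smooth curve on which $\psi=\operatorname{diag}(2,\pm2,6x_4)$; hence $d^2f$ has rank $3$ along $S_1(f)$, $S_{1,1}(f)=\{p\}$ with $R_p$ the $x_4$-line, and a short computation using~\eqref{formula21} gives $d^3f_p\ne0$; since $R_p^*\odot K_p$ is one-dimensional here, $d^3f_p$ is onto, which by the criterion relating transversality of $J^2(f)$ to surjectivity of $d^3f$ is precisely~(b).

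\emph{Conditions~(a) and~(b) force the models.} Assume~(a) and~(b). The codimension $2(|n_X-n_Y|+2)=8$ of $S_2(X,Y)$ in the $1$-jet space exceeds $\dim X=4$, so transversality of $J^1(f)$ to $S_2(X,Y)$ means $J^1(f)$ misses it: $S_2(f)=\emptyset$, and every singular point has corank $1$. Transversality of $J^1(f)$ to $S_1(X,Y)$, of codimension $|n_X-n_Y|+1=3$, makes $S_1(f)$ a smooth curve in $X$, along which the intrinsic Hessian $\psi$---a symmetric form in the $n_X-k=3$ corank directions, valued in the line $G$---is defined. In the space of symmetric $3\times3$ matrices those of corank at least $2$ form a subvariety of codimension $3$; since $\dim S_1(f)=1$, the strata $S_{1,2}(f)$ and $S_{1,3}(f)$ are empty and $S_1(f)=S_{1,0}(f)\sqcup S_{1,1}(f)$ with $S_{1,1}(f)$ discrete; and surjectivity of $d^3f$ on $S_{1,1}(f)$, which~(b) provides, also rules out $S_{1,1,1}(f)$. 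Thus the only Boardman symbols occurring are the empty one, $(1,0)$ and $(1,1,0)$.

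It remains to exhibit the coordinate models on each stratum. Off $S_1(f)$ the submersion theorem gives~(1). At a point of $S_{1,0}(f)$, corank $1$ lets one take the first component of $f$ to be a coordinate $x_1$ and arrange the second component to have vanishing differential there; transversality of $J^1(f)$ to $S_1(X,Y)$ says precisely that the Hessian of this component in the remaining variables is nondegenerate along $S_1(f)$, so the Morse lemma with the parameter $x_1$, followed by a translation in the target, brings $f$ to $(x_1,\,\pm x_2^2\pm x_3^2\pm x_4^2)$, which is~(2) or~(3) according to the locally constant signature of $\psi$ (changing the sign of the second target coordinate if needed). At a point of $S_{1,1}(f)$ the same setup applies with $\psi$ of corank $1$ and kernel line $R_p$; the parametrized splitting lemma peels off a nondegenerate form $\pm x_2^2\pm x_3^2$ in the directions complementary to $R_p$ and leaves a one-parameter family of functions of a single variable with vanishing $2$-jet, and the surjectivity of $d^3f_p$ from~(b) amounts to the nonvanishing of the cubic coefficient together with the first-order dependence on the parameter, so a last coordinate change normalizes the tail to $x_1x_4+x_4^3$ and gives~(4). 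This last reduction---the Morin normal form for $\Sigma^1$ and $\Sigma^{1,1}$ singularities in the case $(n_X,n_Y)=(4,2)$---is the main technical point: one must run the splitting lemma with parameters while preserving $f_1=x_1$, bring the residual cuspidal family to $x_1x_4+x_4^3$, and verify that~(a) and~(b) supply exactly the nondegeneracies needed and nothing more.
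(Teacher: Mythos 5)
The paper offers no proof of this proposition---it is imported from Levine's work with a citation---so there is no argument in the paper to measure yours against; what you have written is an outline of the standard proof from the cited sources, and its overall architecture (direct verification of (a) and (b) on the four local models; conversely, using (a) to kill $S_2(f)$ and make $S_1(f)$ a curve, confining singular points to $S_{1,0}(f)\sqcup S_{1,1}(f)$, and then extracting the normal forms via the parametrized Morse and splitting lemmas plus the Morin normalization, with (b) supplying the cubic nondegeneracy) is the right one. The computations you quote for the models agree with the quantities $M$, $H$ and $\frac{\partial}{\partial x_4}(\xi_4(\xi_4(R)))$ that the paper assembles in Lemma~\ref{lemma28}.

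One step, however, is justified by an argument that does not prove it. You dispose of $S_{1,2}(f)$ and $S_{1,3}(f)$ by observing that corank~$\ge 2$ symmetric $3\times 3$ matrices form a codimension-$3$ subvariety while $\dim S_1(f)=1$. That dimension count rules out a stratum only for a map transverse to it (or for a generic map); your $f$ is a fixed map assumed to satisfy just (a) and (b), and (b) is transversality to $S_1^2(X,Y)=S_{1,1}(X,Y)$, which is vacuous at a point whose $2$-jet lands in $S_{1,2}(X,Y)$ instead. The conclusion is still correct, but for a different reason, and it comes from (a): by Proposition~\ref{prop22}, transversality of $J^1(f)$ to $S_1(X,Y)$ at $p$ says the $4\times 3$ matrix $M$ representing $d^2f_p$ has rank $3$; deleting its first row leaves the matrix $H$ of $\psi=d^2f_p|_{L_p}$, so $\rk M\le\rk H+1$ and hence $\rk H\ge 2$, i.e.\ $\psi$ has corank at most $1$ along all of $S_1(f)$. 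Two smaller misattributions of the same kind are worth fixing: nondegeneracy of the kernel Hessian at a fold is the definition of $S_{1,0}(f)$, not a restatement of transversality of $J^1(f)$ to $S_1(X,Y)$ (which only gives $\rk M=3$); and in the cusp normalization the first-order dependence on the parameter, $\frac{\partial^2 R}{\partial x_1\partial x_4}(p)\ne 0$, comes from $\rk M=3$ (condition (a)), while surjectivity of $d^3f_p$ (condition (b)) accounts only for the nonvanishing cubic coefficient. With these repairs your sketch is a correct proof outline.
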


The condition (a) is equivalent to that 
$d^2f: E\to L^*\otimes G$ is surjective and $S_2(f)= \emptyset$.
The condition (b) is equivalent to that, for any $p\in S(f)$, either (i) $p\in S_{1,0}(f)$ or (ii) $p\in S_{1,1}(f)$ and 
$d^3f_p: T_p\to R^*_p\odot K_p$ is surjective.
Note that the condition (ii) is equivalent to $p\in S_{1,1,0}(f)$.
The following proposition gives a criterion to determine if a singular point is 
a fold, a cusp, or none of them.

\begin{prop}
Let $f:X\to Y$ be a smooth map. Suppose that $p\in S_1(f)$ and $d^2f_p: T_pX\to L_p^*\otimes G_p$ is surjective.
Then $p$ is a fold of $f$ if and only if $p\in S_{1,0}(f)$,
and $p$ is a cusp of $f$ if and only if $p\in S_{1,1}(f)$ and $d^3f_p: T_p\to R^*_p\odot K_p$ is surjective.
\end{prop}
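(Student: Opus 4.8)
The plan is to pass to local coordinates adapted to $\rk df_p=1$, to translate the hypotheses and the conclusions into conditions on the low-order Taylor coefficients of the second component of $f$, and then to read off the normal forms (2)--(4) by a parametrized Morse (splitting) lemma. Since $p\in S_1(f)$, there are coordinates $x_1,\dots,x_4$ centred at $p$ and $y_1,y_2$ centred at $f(p)$ with $f=(x_1,g(x_1,x_2,x_3,x_4))$, $g(0)=0$ and $dg_p=0$; in these coordinates $L_p=\ker df_p=\langle\partial_{x_2},\partial_{x_3},\partial_{x_4}\rangle$ and $G_p$ is the line spanned by (the image of) $\partial_{y_2}$. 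Unwinding~\eqref{formula20} in these coordinates, $d^2f_p:T_pX\to L_p^*\otimes G_p$ is represented by the $3\times4$ matrix $M=\bigl(\partial^2 g/\partial x_k\partial x_j(p)\bigr)_{2\le j\le4,\ 1\le k\le4}$, whose last three columns form the Hessian $H=\bigl(\partial^2 g/\partial x_i\partial x_j(p)\bigr)_{2\le i,j\le4}$; and the restriction $\psi$ of $d^2f_p$ to $L_p$ introduced after Proposition~\ref{prop22} is exactly $H$, viewed as a symmetric $G_p$-valued form on $L_p$. Hence $d^2f_p$ is surjective iff $\rk M=3$, and $p\in S_{1,0}(f)$ (resp. $S_{1,1}(f)$) iff $\rk H=3$ (resp. $\rk H=2$). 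A linear change in $x_2,x_3,x_4$, which does not change $\rk M$, together with a possible sign change of $y_2$, normalizes $H$ to $\mathrm{diag}(2,\pm2,2)$ in the first case and to $\mathrm{diag}(2,\pm2,0)$ in the second, with $R_p=\langle\partial_{x_4}\rangle$ in the second. A similar unwinding of~\eqref{formula21} identifies $d^3f_p:T_p\to R_p^*\odot K_p$, a linear map between $1$-dimensional spaces, with a nonzero multiple of $\partial^3 g/\partial x_4^3(p)$, so $d^3f_p$ is surjective iff this derivative is nonzero.

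With this dictionary the fold case is short. If $p\in S_{1,0}(f)$ then $H$ is nondegenerate and $d^2f_p$ is automatically surjective. The Morse lemma with parameter $x_1$, applied to $x\mapsto g(x_1,x_2,x_3,x_4)$, together with the change $y_2\mapsto y_2-c(y_1)$ absorbing the resulting function of $x_1$, brings $f$ to $(x_1,\varepsilon_2 x_2^2+\varepsilon_3 x_3^2+\varepsilon_4 x_4^2)$ with $\varepsilon_i=\pm1$; a final sign change of $y_2$ and a permutation of $x_2,x_3,x_4$ produce normal form (2) or (3), so $p$ is a definite or indefinite fold. Conversely, in normal forms (2) and (3) the Hessian is nondegenerate, so $p\in S_{1,0}(f)$. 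This is the first equivalence.

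For the cusp case, suppose $p\in S_{1,1}(f)$ and $d^2f_p$ is surjective. Then $\rk H=2$ and, with $H$ normalized to $\mathrm{diag}(2,\pm2,0)$, the condition $\rk M=3$ forces $\partial^2 g/\partial x_1\partial x_4(p)\ne0$. The splitting lemma with parameter $x_1$ now gives coordinates in which $g=\varepsilon_2 x_2^2+\varepsilon_3 x_3^2+\gamma(x_1,x_4)$, where $\gamma$ has no linear term, no $x_4^2$-term, and $\partial^2\gamma/\partial x_1\partial x_4(0)\ne0$; its pure $x_1^2$-term is absorbed into a change $y_2\mapsto y_2-(\text{function of }y_1)$, so $\gamma=\mu x_1x_4+\nu x_4^3+(\text{higher order})$ with $\mu\ne0$. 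If moreover $d^3f_p$ is surjective, then $\nu\ne0$, and rescaling $x_1,x_4,y_2$ brings $f$ to $(x_1,x_2^2\pm x_3^2+x_1x_4+x_4^3)$, which is normal form (4); so $p$ is a cusp. Conversely, for $f$ of the form (4) one computes $H=\mathrm{diag}(2,\pm2,0)$ and $\rk M=3$, so $p\in S_{1,1}(f)$ with $d^2f_p$ surjective, and $\partial^3 g/\partial x_4^3(p)=6\ne0$, so $d^3f_p$ is surjective. This is the second equivalence.

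The one step that demands genuine care is \emph{the translation in the first paragraph}: one must verify that Levine's coordinate-free bundle maps $d^2f$ and $d^3f$, assembled from~\eqref{formula20}, \eqref{formula21} and the projections $\pi_1,\pi_2,\pi_1',\pi_2'$, really reduce in the prenormal coordinates to the asserted matrices of second and third $x$-derivatives of $g$, and that $\rk M$ is unaffected by the linear normalization of $H$. Everything after that is the classical parametrized Morse/splitting lemma together with elementary rescaling of coefficients; the relevant statements can be found in, or adapted from, \cite{levine} and \cite{golubitsky}.
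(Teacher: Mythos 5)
The paper does not prove this proposition at all: it is quoted as part of Levine's criterion (cf.\ the citations \cite{levine3, levine} attached to the surrounding propositions), and the text only records the coordinate translation of $d^2f_p$, $d^3f_p$ into the matrices $M$, $H$ and the quantity $\frac{\partial}{\partial x_4}(\xi_4(\xi_4(R)))(p)$ that is then used in Sections~3--5. Your proposal therefore necessarily takes a different route: you actually reprove the criterion by passing to adapted coordinates $f=(x_1,g)$, identifying $d^2f_p$ with $M$, $\psi$ with $H$, and $d^3f_p$ with $\partial^3g/\partial x_4^3(p)$, and then invoking the parametrized Morse/splitting lemma to reach normal forms (2)--(4). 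This translation is correct and is in fact exactly the dictionary the paper itself sets up after this proposition (your $M$ and $H$ coincide with the paper's, and in your coordinates $Q=x_1$ gives $\xi_4=\partial_{x_4}$, so your third-derivative criterion agrees with the paper's Lemma~\ref{lemma28}); your approach buys a self-contained justification of the criterion, whereas the paper buys brevity by outsourcing it to \cite{levine} and \cite{golubitsky}. The one place where you are too quick is the last step of the cusp case: after the splitting lemma you have $g=x_2^2\pm x_3^2+\mu x_1x_4+\nu x_4^3+(\text{higher order})$, and ``rescaling $x_1,x_4,y_2$'' does not dispose of the higher-order remainder --- eliminating it is the genuinely nontrivial content of the Whitney--Levine cusp normal form theorem (\cite{whitney}, \cite{levine}) and requires either Whitney's original functional-equation argument or the preparation-theorem machinery of \cite{golubitsky}. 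Likewise, the identification of ``$d^3f_p$ surjective'' with $\nu\ne 0$ after the splitting-lemma change of coordinates rests on the invariance of the third intrinsic derivative along $\ker H$, which you correctly flag as the step demanding care but do not carry out. Neither point is a wrong turn --- both are standard --- but as written they are appeals to the literature rather than proofs, so your argument is best read as a correct and well-organized reduction of the proposition to the classical normal form theorems rather than a complete independent proof.
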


In the proofs of the main theorems, we will explicitly calculate the higher differentials
by choosing suitable basis.
Let $P= (Q,R):\R^4\to \R^2$ be a smooth map and $p\in S_1(P)$. 
We may choose local coordinates $(x_1, x_2, x_3, x_4)$ of $\R^4$ at $p$ 
and those of $\R^2$ at $P(p)$ such that $\grad Q(p)= (1, 0, 0, 0)$ and $\grad R(p)= (0, 0, 0, 0)$. 
We see that $\left\{\frac{\partial }{\partial x_1},\frac{\partial }{\partial x_2},
\frac{\partial }{\partial x_3},\frac{\partial }{\partial x_4}\right\}$
is a basis of sections of $T\R^4$ in 
the neighborhood of $p$.
In this setting, $\dim L_p=3, \dim G_p=1$ and $d^2P_p$ is represented by the matrix
\[
M= \left(\frac{\partial^2 R}{\partial x_i\partial x_j}\right)_{i=1,2,3,4,\;\,j=2,3,4.}
\]
Hence $d^2P_p$ is surjective if and only if $\rk M=3$. We can check that the restriction $d^2P_p|L_p$ is represented by
\[
H= \left(\frac{\partial^2 R}{\partial x_i\partial x_j}\right)_{i=2,3,4, j=2,3,4,}
\]
which is exactly the Hessian of $R$ with variables $(x_2, x_3, x_4)$. 
In particular, $p\in S_{1,1}(P)$ if and only if $\rk H=2.$

If $p\in S_{1,1}(P)$ and $d^2P_p$ is surjective, 
by choosing suitable coordinates $(x_1,x_2,x_3,x_4)$, we may further assume that 
$\frac{\partial^2 R}{\partial x_4\partial x_j}(p)=0$ 
for all $j=2, 3, 4$. Then $\dim L^*_p=3$, $\rk (d^2P_p|L_p)=2$, $\dim R^*_p=1$ and $\dim K_p=1$ imply
$R^*_p\odot K_p= R^*_p\otimes K_p$. Let 
\[
   \xi_j= -\frac{\partial Q}{\partial x_j}\frac{\partial }{\partial x_1}
+\frac{\partial Q}{\partial x_1}\frac{\partial }{\partial x_j}
\]
for $j=2,3,4$.

\begin{lemma}\label{lemma28a}
$\{\xi_2,\xi_3,\xi_4\}$ is a basis of $L|S_1(P)\cap U$ for some neighborhood $U$ of $p$. 
\end{lemma}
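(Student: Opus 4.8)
\textbf{Proof plan for Lemma~\ref{lemma28a}.}
The claim is that the three vector fields $\xi_j = -\frac{\partial Q}{\partial x_j}\frac{\partial}{\partial x_1} + \frac{\partial Q}{\partial x_1}\frac{\partial}{\partial x_j}$, $j=2,3,4$, form a frame of the rank-$3$ bundle $L$ over a neighborhood $U\cap S_1(P)$ of $p$. The plan is to argue in two stages: first that each $\xi_j$ is a section of $L$ over $S_1(P)$, and then that the three are pointwise linearly independent near $p$.

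First I would recall that, by definition of $L$ from the exact sequence $0\to L\to E\overset{dP}{\to} F\to G\to 0$, the fiber $L_x$ at $x\in S_1(P)$ is precisely $\ker dP_x$. So I need to check $dP_x(\xi_j)=0$ for $x\in S_1(P)\cap U$. Writing $P=(Q,R)$, this amounts to showing $dQ_x(\xi_j)=0$ and $dR_x(\xi_j)=0$. The first is immediate and holds everywhere, not just on $S_1(P)$: by the very construction of $\xi_j$ as a ``symplectic-type'' combination of $\partial/\partial x_1$ and $\partial/\partial x_j$, one computes $dQ(\xi_j) = -\frac{\partial Q}{\partial x_j}\frac{\partial Q}{\partial x_1} + \frac{\partial Q}{\partial x_1}\frac{\partial Q}{\partial x_j} = 0$. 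For the second, on $S_1(P)$ the differential $dP_x$ has rank $1$; since $\grad Q(p)=(1,0,0,0)\neq 0$, shrinking $U$ we may assume $\grad Q$ is nonvanishing on $U$, so $dP_x$ has rank exactly $1$ there and its image is spanned by $dQ_x$. Hence on $S_1(P)\cap U$ we have $dR_x = \lambda(x)\, dQ_x$ for a smooth function $\lambda$, and therefore $dR_x(\xi_j) = \lambda(x)\, dQ_x(\xi_j) = 0$. This shows $\xi_j|_{S_1(P)\cap U}$ is a section of $L$.

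Next I would verify linear independence. At the point $p$ we have $\grad Q(p)=(1,0,0,0)$, so $\frac{\partial Q}{\partial x_1}(p)=1$ and $\frac{\partial Q}{\partial x_j}(p)=0$ for $j=2,3,4$; thus $\xi_j(p) = \frac{\partial}{\partial x_j}$, and $\{\xi_2(p),\xi_3(p),\xi_4(p)\} = \{\partial/\partial x_2,\partial/\partial x_3,\partial/\partial x_4\}$ is obviously a linearly independent set. Linear independence is an open condition (the relevant $3\times 3$ minor of the coefficient matrix is continuous and nonzero at $p$), so after shrinking $U$ once more the vectors $\xi_2(x),\xi_3(x),\xi_4(x)$ remain linearly independent for every $x\in U\cap S_1(P)$. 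Since $\dim L_x = n_X - k = 4-1 = 3$ and we have produced three independent sections lying in $L$, they form a basis of $L|_{S_1(P)\cap U}$.

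There is no serious obstacle here; the only point requiring a little care is the reduction ``$dR_x$ is proportional to $dQ_x$ on $S_1(P)$,'' which uses that $\grad Q(p)\neq 0$ to guarantee the rank-$1$ image is spanned by $dQ_x$ rather than by some other combination — this is why we shrink $U$ so that $\grad Q$ stays nonzero. Everything else is the observation that $\xi_j$ is by design annihilated by $dQ$ and that at $p$ the $\xi_j$ specialize to the standard coordinate vector fields, combined with openness of linear independence.
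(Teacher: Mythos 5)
Your proposal is correct and follows essentially the same route as the paper: $dQ(\xi_j)=0$ identically by the antisymmetric construction, $dR(\xi_j)=0$ on $S_1(P)$ because there $dR$ is proportional to $dQ$ (the rank of $dP$ drops to $1$ while $\grad Q\neq 0$ near $p$), and linear independence follows from $\frac{\partial Q}{\partial x_1}(p)=1$ together with openness. The paper's proof is just a terser version of the same argument, so no further comment is needed.
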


\begin{proof}
For $j=2,3,4$, we have $dQ(\xi_j)=0$. Then $dR(\xi_j)=0$ on $S_1(P)$ in this coordinate neighborhood.
Thus $\xi_j$ belongs to $L$.
Since $\frac{\partial Q}{\partial x_1}(p)=1$, they are independent in a small neighborhood of $p$.
\end{proof}

Recall that $\left\{\frac{\partial }{\partial x_2},\frac{\partial }{\partial x_3},\frac{\partial }{\partial x_4}\right\}$ 
is a basis of $L_p$. Since $\rk M=3$ we have $\frac{\partial^2 R}{\partial x_1\partial x_4}(p)\neq 0$. 
Hence $T_p$ is a one-dimensional space generated by $\frac{\partial }{\partial x_4}$. 

In the following, the pairing $\langle\;\;,\;\;\rangle$ is defined with respect to the basis
$\left\{\frac{\partial}{\partial x_1}, \xi_2, \xi_3, \xi_4\right\}$.

\begin{lemma}
The map $d^3P_p: T_p\to R_p^*\otimes K_p$ is given by 
\[
t\frac{\partial}{\partial x_4}
\mapsto t\frac{\partial}{\partial x_4}(\xi_4(\xi_4(R)))(p)\langle \;\;\;, dx_4(p)\rangle w_2(p).
\]
\end{lemma}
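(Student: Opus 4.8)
The plan is to unravel the definition $d^3P_p=\pi_2'\circ\pi_1'\circ\varphi^2_p$ directly from~\eqref{formula21}, using the frames already fixed in the discussion preceding the lemma, and to exploit the special coordinates so that only one term of the sum survives at $p$. First I would record the fibres at $p$: one has $L_p=\langle\partial/\partial x_2,\partial/\partial x_3,\partial/\partial x_4\rangle$; the kernel $R_p=\ker(d^2P_p|L_p)$ equals $\langle\partial/\partial x_4\rangle=\langle\xi_4(p)\rangle$, since the Hessian $H$ kills $\partial/\partial x_4$ and is nondegenerate on the $(x_2,x_3)$-block; $T_p=\langle\partial/\partial x_4\rangle$, as already noted; $G_p=F_p/dP(E_p)=F_p/\langle w_1(p)\rangle$ is generated by $\pi_1(w_2(p))$; and $K_p$ is the cokernel of $d^2P_p\colon L_p\to L_p^*\otimes G_p$, whose image is spanned by $dx_2(p)\otimes\pi_1(w_2(p))$ and $dx_3(p)\otimes\pi_1(w_2(p))$, so that $K_p$ is generated by $\pi_2(dx_4(p)\otimes\pi_1(w_2(p)))$. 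As the frame $\{z_1,z_2,z_3\}$ of $L$ appearing in~\eqref{formula21} I take $\{\xi_2,\xi_3,\xi_4\}$ from Lemma~\ref{lemma28a}, with the dual pairing the one with respect to $\{\partial/\partial x_1,\xi_2,\xi_3,\xi_4\}$ stipulated in the text; at $p$ this is the standard frame, so $z_m^*(p)=dx_{m+1}(p)$.

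Next, writing $a_{mj}$ for the coefficients in $dP(z_m)=a_{m1}w_1+a_{m2}w_2$, the key point is that $a_{m1}=dQ(\xi_{m+1})\equiv 0$ by the very definition of the $\xi_j$, while $a_{m2}=dR(\xi_{m+1})=\xi_{m+1}(R)$. I then evaluate $\varphi^2_p$ on $t\,\partial/\partial x_4\in T_p$. Taking $r=\xi_4(p)$, the generator of $R_p$, the factor $\langle r,z_i^*(p)\rangle$ forces the index $i$ to be $3$, so $z_i$ becomes $\xi_4$; letting $l$ run over $\{z_1(p),z_2(p),z_3(p)\}$, the factor $\langle l,z_m^*(p)\rangle$ forces $m=n$ when $l=z_n(p)$; and the $j=1$ contributions vanish since $a_{m1}\equiv 0$ (and in any case $\pi_1'$ sends $w_1(p)$ to $0$ in $G_p$). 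Hence $\pi_1'\circ\varphi^2_p(t\,\partial/\partial x_4)$ sends $\xi_4(p)\otimes z_n(p)$ to $t\,\frac{\partial}{\partial x_4}\bigl(\xi_4(\xi_{n+1}(R))\bigr)(p)\,\pi_1(w_2(p))$; equivalently, it equals $\sum_{n=1}^{3}t\,\frac{\partial}{\partial x_4}\bigl(\xi_4(\xi_{n+1}(R))\bigr)(p)\,\langle\;\;\;,dx_4(p)\rangle\otimes dx_{n+1}(p)\otimes\pi_1(w_2(p))$.

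Finally I apply $\pi_2'$. Since $\pi_2$ annihilates $dx_2(p)\otimes\pi_1(w_2(p))$ and $dx_3(p)\otimes\pi_1(w_2(p))$ while sending $dx_4(p)\otimes\pi_1(w_2(p))$ to the generator of $K_p$, only the $n=3$ summand survives, giving
\[
d^3P_p\!\left(t\frac{\partial}{\partial x_4}\right)
=t\,\frac{\partial}{\partial x_4}\bigl(\xi_4(\xi_4(R))\bigr)(p)\,\langle\;\;\;,dx_4(p)\rangle\,w_2(p),
\]
the asserted formula, with $w_2(p)$ read as the generator $\pi_2(dx_4(p)\otimes\pi_1(w_2(p)))$ of $K_p$. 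The argument is essentially bookkeeping once the frames are fixed; the one place that genuinely needs care — and the main obstacle — is keeping straight which dual frame each pairing entering~\eqref{formula20} and~\eqref{formula21} refers to, in particular that $R_p^*$ is realised inside $L_p^*$ via the frame $\{\partial/\partial x_1,\xi_2,\xi_3,\xi_4\}$ rather than the standard one, and checking that the two independent reasons for the vanishing of the $j=1$ and $n\neq 3$ terms are each valid, so that no hidden contribution is dropped.
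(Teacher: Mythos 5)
Your argument is correct and follows essentially the same route as the paper's proof: identify the dual frame of $\{\partial/\partial x_1,\xi_2,\xi_3,\xi_4\}$ with $\{dx_1,\dots,dx_4\}$ at $p$, kill the $w_1$-terms via $\pi_1'$ (your extra observation that $a_{m1}=dQ(\xi_{m+1})\equiv 0$ is a valid second reason), kill the $dx_2,dx_3$ components via $\pi_2'$ because they lie in the image of $d^2P_p|L_p$, and reduce the surviving term to $\xi_4(\xi_4(R))$ using $a_{42}=\xi_4(R)$. The only difference is presentational: you spell out the generators of $R_p$, $T_p$, $G_p$ and $K_p$ explicitly, whereas the paper leaves that bookkeeping implicit.
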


\begin{proof}
Since $\frac{\partial Q}{\partial x_1}(p)=1$ and $\frac{\partial Q}{\partial x_j}(p)=0$ for $j=2,3,4$, we have
\[
\langle \;\;\;, \xi_j^*(p)\rangle=
-\frac{\partial Q}{\partial x_j}(p)\langle\;\;\;, dx_1(p)\rangle
+\frac{\partial Q}{\partial x_1}(p)\langle\;\;\;, dx_j(p)\rangle
=\langle\;\;\;, dx_j(p)\rangle
\]
for $j=2,3,4$.
The terms in~\eqref{formula21} with $w_1(p)$ vanish by $\pi_1'$. 
For $j=2,3$, $\langle \;\;\;, \xi_j^*(p)\rangle$ belongs to the image of $d^2P_p$
and therefore their images by the map $\pi_2'$ vanish. Hence we have 
\[
  d^3P_p(t)=\langle t\frac{\partial}{\partial x_4}, d(\langle \xi_4, da_{42}\rangle)(p)\rangle\langle\;\;\;, dx_4(p)\rangle w_2(p).
\]
To prove the assertion, it is enough to show that
$\langle \xi_4, da_{42}\rangle=\xi_4(\xi_4(R))$.
According to the definition of $a_{42}$ we have $a_{42}=\xi_4(R)$.
Thus
$\langle \xi_4, da_{42}\rangle=\langle \xi_4, d(\xi_4(R))\rangle$. 
Since $da_{42}=\frac{\partial a_{42}}{\partial x_1}dx_1+\sum_{j=2,3,4}\xi_j(a_{42})\xi_j^*$, 
we have $\langle \xi_4, da_{42}\rangle=\xi_4(a_{42})=\xi_4(\xi_4(R))$.
This is the equality which we want to have.
\end{proof}

The map $d^3P_p$ is surjective if and only if 
\[
   \frac{\partial}{\partial x_4}(\xi_4(\xi_4(R)))(p)\neq 0.
\]
Thus, in summary, we have the following criterion to check if a 
singularity is a fold, cusp or none of them.

\begin{lemma}\label{lemma28}
In the above setting, $p\in S_1(P)$ is a fold if and only if $\rk H=3$.
It is a cusp if and only if $\rk M=3$, $\rk H=2$ and
$\frac{\partial}{\partial x_4}(\xi_4(\xi_4(R)))(p)\ne 0$.
\end{lemma}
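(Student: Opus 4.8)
The plan is to obtain Lemma~\ref{lemma28} by reading the criterion for folds and cusps proved just above — a singular point $p\in S_1(f)$ with $d^2f_p$ surjective is a fold if and only if $p\in S_{1,0}(f)$, and a cusp if and only if $p\in S_{1,1}(f)$ and $d^3f_p$ is surjective — through the concrete descriptions already set up in this subsection: $d^2P_p$ is the matrix $M$, its restriction $\psi=d^2P_p|L_p$ is the Hessian $H$, and, after the auxiliary normalization $\frac{\partial^2 R}{\partial x_4\partial x_j}(p)=0$, the third differential $d^3P_p$ is the rank-at-most-one map $t\frac{\partial}{\partial x_4}\mapsto t\,\frac{\partial}{\partial x_4}(\xi_4(\xi_4(R)))(p)\,\langle\,\cdot\,,dx_4(p)\rangle\,w_2(p)$ computed in the preceding lemma. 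For each of the two singularity types I would establish the two implications separately, getting the ``rank conditions $\Rightarrow$ singularity type'' direction from the cited criterion and the converse from the normal forms (2)--(4) in the definition of an excellent map.

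For the fold case I would argue as follows. If $\rk H=3$ then, since $H$ is the submatrix of $M$ on the rows $i=2,3,4$, also $\rk M=3$, so $d^2P_p$ is surjective and, by Proposition~\ref{prop22}, $J^1(P)$ is transversal to $S_1(\R^4,\R^2)$ at $p$, so that $S_{1,0}(P)$ makes sense near $p$; moreover $\rk\psi=\rk H=3=n_X-k$ means the corank drop $h$ is zero, i.e.\ $p\in S_{1,0}(P)$, hence $p$ is a fold. For the converse I would put $P$ in the normal form $(x_1,\,x_2^2+x_3^2\pm x_4^2)$; these coordinates already satisfy $\grad Q(p)=(1,0,0,0)$ and $\grad R(p)=(0,0,0,0)$, and there $H=\mathrm{diag}(2,2,\pm2)$ has rank $3$, so $\rk H=3$ in any system of coordinates adapted as in the lemma, since $\rk H=\rk\psi$ and $\psi$ is an intrinsically defined bundle map.

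For the cusp case, assuming $\rk M=3$ and $\rk H=2$ gives that $d^2P_p$ is surjective and $\rk\psi=2=n_X-k-1$, so $p\in S_{1,1}(P)$; after the further change of coordinates making $\frac{\partial^2 R}{\partial x_4\partial x_j}(p)=0$ for $j=2,3,4$, the preceding lemma exhibits $d^3P_p$ as a linear map between the one-dimensional spaces $T_p$ and $R^*_p\odot K_p=R^*_p\otimes K_p$ which is surjective exactly when $\frac{\partial}{\partial x_4}(\xi_4(\xi_4(R)))(p)\neq 0$, so $p$ is a cusp. Conversely, putting $P$ in the normal form $(x_1,\,x_2^2\pm x_3^2+x_1x_4+x_4^3)$ — again with $\grad Q(p)=(1,0,0,0)$, $\grad R(p)=(0,0,0,0)$ — and computing $M$, $H$ and the scalar $\frac{\partial}{\partial x_4}(\xi_4(\xi_4(R)))(p)$ directly gives $\rk M=3$, $\rk H=2$ and $\frac{\partial}{\partial x_4}(\xi_4(\xi_4(R)))(p)\neq 0$. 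The only point that needs genuine care, everything else being assembly of equivalences already in hand, is the coordinate-independence used in the two converse directions — that $\rk\psi$ (hence $\rk H$) and the surjectivity of $d^2P_p$ and $d^3P_p$ depend only on $p$ and not on the chosen adapted chart — which follows immediately from their definitions as ranks of intrinsic bundle maps; I expect this to be the main, and rather minor, obstacle.
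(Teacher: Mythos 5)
Your argument is correct and is essentially the paper's own: the paper states this lemma as a direct summary of the fold/cusp criterion (the proposition saying a singular point with $d^2f_p$ surjective is a fold iff it lies in $S_{1,0}(f)$ and a cusp iff it lies in $S_{1,1}(f)$ with $d^3f_p$ surjective) combined with the concrete representations of $d^2P_p$ by $M$, of $\psi=d^2P_p|L_p$ by $H$, and of $d^3P_p$ by the scalar $\frac{\partial}{\partial x_4}(\xi_4(\xi_4(R)))(p)$ computed in the preceding lemma. Your explicit verification of the converse implications via the normal forms, together with the observation that $\rk\psi$ and the surjectivity of $d^2P_p$ and $d^3P_p$ are intrinsic, only spells out what the paper leaves implicit.
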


\section{Proof of the first assertion in Theorem~\ref{thm1}}

Throughout this section, for a function $h:\R^n\to\R$ with variables $x_1,\cdots,x_n$,
we denote the partial differentials $\frac{\partial h}{\partial x_i}$,
$\frac{\partial^2 h}{\partial x_i\partial x_j}$ and
$\frac{\partial^3 h}{\partial x_i\partial x_j\partial x_k}$
by $h_{x_i}$, $h_{x_ix_j}$ and $h_{x_ix_jx_k}$ respectively.

We are studying polynomial maps
of the form $f(z,w)=z^p+w^q+a\bar z+b\bar w$.
This polynomial has complex and complex-conjugate variables.
Such a polynomial is called a {\it mixed polynomial} in~\cite{oka}.
Since the assertion in Theorem~\ref{thm1} is for generic $a$ and $b$,
we may assume that $ab\ne 0$.
Let $c_1$ and $c_2$ be non-zero complex numbers satisfying
$c_1^p=a\bar c_1$ and $c_2^q=b\bar c_2$, respectively.
By changing the coordinates as $z=c_1u$ and $w=c_2v$
and setting $\mu=a\bar c_1/(b\bar c_2)$, we have 
\[
\begin{split}
f(z,w)&=(c_1u)^p+a\overline{c_1u}+(c_2v)^q+b\overline{c_2v}\\
&=a\bar c_1(u^p+\bar u)+b\bar c_2(v^q+\bar v) \\
&=b\bar c_2( \mu (u^p+\bar u)+v^q+\bar v).
\end{split}
\]
Now we set
\[
P(u,v;\mu)=\mu(u^p+\bar u)+v^q+\bar v,
\]
with $p,q\geq 2$ and $\mu\in \C\setminus\{0\}$.

\begin{lemma}\label{lemma300}
Suppose that $P$ is an excellent map. Then there exists
a linear deformation $f_t$ of $f$ which consists of excellent maps for $t\in (0,1]$
and satisfies $f_1(z,w)=f(z,w)+a\bar z+b\bar w$.
Moreover, if $P$ has no definite fold then $f_t$ does also for $t\in (0,1]$.
\end{lemma}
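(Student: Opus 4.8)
The plan is to use the invariance of the excellence property under diffeomorphisms of source and target, combined with a $t$-dependent rescaling of the linear terms chosen so that the whole family reduces to $P(\cdot\,;\mu)$ with \emph{one and the same} parameter $\mu$.

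First I would record the elementary fact that, because an excellent map is characterized by the existence of local coordinates bringing it to one of the normal forms (1)--(4) of the definition, pre-composing with a diffeomorphism of the source or post-composing with a diffeomorphism of the target again yields an excellent map, and moreover does not change the type (regular point, definite fold, indefinite fold, or cusp) of any point. In particular this applies to an invertible complex-linear change of the source variables $(z,w)$ and to multiplication of the target $\C=\R^2$ by a fixed nonzero complex number, the latter being an $\R$-linear automorphism of $\R^2$. Together with the reduction carried out just before the lemma, this already gives the endpoint $t=1$: the map $z^p+w^q+a\bar z+b\bar w$ is excellent (with no definite fold, when $P$ has none) if and only if $P$ is.

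Next I would take the deformation
\[
   f_t(z,w)=z^p+w^q+t^{\,q(p-1)}a\bar z+t^{\,p(q-1)}b\bar w,\qquad t\in[0,1],
\]
whose coefficients are polynomials in $t$ vanishing at $t=0$, with $f_0=z^p+w^q$ and $f_1=z^p+w^q+a\bar z+b\bar w$. For $t\in(0,1]$ I would then apply the complex-linear substitution $z=t^{q}c_1u$, $w=t^{p}c_2v$: using $c_1^p=a\bar c_1$ one gets $z^p=t^{pq}c_1^pu^p$ and $t^{q(p-1)}a\bar z=t^{pq}a\bar c_1\bar u=t^{pq}c_1^p\bar u$, and similarly the $w$-block becomes $t^{pq}c_2^q(v^q+\bar v)$ via $c_2^q=b\bar c_2$; the two blocks therefore carry the common factor $t^{pq}$ and recombine into $t^{pq}c_2^q\bigl(\mu(u^p+\bar u)+v^q+\bar v\bigr)=t^{pq}c_2^q\,P(u,v;\mu)$, with exactly the $\mu=a\bar c_1/(b\bar c_2)$ used at $t=1$. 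The point of the exponents $q(p-1)$ and $p(q-1)$ is precisely to make these two powers of $t$ agree so that the reduced parameter is constant along the family.

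Finally, since for each $t\in(0,1]$ the map $f_t$ differs from $P(\cdot\,;\mu)$ only by an invertible complex-linear change of the source coordinates and by multiplication by the nonzero constant $t^{pq}c_2^q$ in the target, the first paragraph lets me conclude that every $f_t$, $t\in(0,1]$, is an excellent map, with singular points of the same types as those of $P$; in particular $f_t$ has no definite fold whenever $P$ has none. The only real obstacle is the bookkeeping needed to pin down the $t$-powers in the linear terms so that $\mu$ is literally independent of $t$; once that is settled the statement follows at once from diffeomorphism-invariance of excellence.
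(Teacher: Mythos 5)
Your proposal is correct and follows essentially the same route as the paper: the same deformation $f_t(z,w)=z^p+w^q+t^{q(p-1)}a\bar z+t^{p(q-1)}b\bar w$, the same rescaled substitution $z=t^qc_1u$, $w=t^pc_2v$ making the reduced parameter $\mu$ independent of $t$, and the same appeal to invariance of excellence and of singularity type under source/target coordinate changes (which you spell out more explicitly than the paper does).
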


\begin{proof}
Set $A(t)=at^{(p-1)q}$ and $B(t)=bt^{p(q-1)}$
and define the linear deformation $f_t(z,w)$ by $f_t(z,w)=f(z,w)+A(t)\bar z+B(t)\bar w$.
Note that this satisfies $f_1(z,w)=f(z,w)+a\bar z+b\bar w$ as required.
Since $c_1(t)^p=A(t)\bar c_1(t)$ and $c_2(t)^q=B(t)\bar c_2(t)$, 
we may choose $c_1(t)=c_{1,0}t^q$ and $c_2(t)=c_{2,0}t^p$,
where $c_{1,0}$ and $c_{2,0}$ are non-zero complex numbers 
satisfying $c_{1,0}^p=a\bar c_{1,0}$ and $c_{2,0}^q=b\bar c_{2,0}$, such that
\[
   \mu=\frac{A(t)\bar c_1(t)}{B(t)\bar c_2(t)}=\frac{at^{(p-1)q}\cdot \bar c_{1,0}t^q}{bt^{p(q-1)}\cdot \bar c_{1,0}t^p}
   =\frac{a\bar c_{1,0}}{b\bar c_{1,0}}.
\]
Therefore, for each $t\in (0,1]$, the smooth map $f_t$
can be replaced by $P$ using this change of coordinates.
\end{proof}


By Lemma~\ref{lemma300}, to prove Theorem~\ref{thm1}, it is enough to show that $P(u,v;\mu)$ is an excellent map 
for a generic choice of $\mu$.
Hereafter we study the map $P$ instead of $f$.

\begin{lemma}\label{lemma31}
A singular point $z_0=(u_0,v_0)$ of $P$ satisfies
\[
\left\{\begin{split}
& p|u_0|^{p-1}=q|v_0|^{q-1}=1,\\
& \frac{p-1}{2}\arg u_0+\arg\mu=\frac{q-1}{2}\arg v_0+\kappa \pi,
\end{split}
\right.
\]
where $\kappa$ is some integer.
\end{lemma}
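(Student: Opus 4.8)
The plan is to compute the real Jacobian of $P$ regarded as a map $\R^4\to\R^2$ and extract the condition for the differential to fail to be surjective. First I would write $u=r_1e^{i\theta_1}$, $v=r_2e^{i\theta_2}$ (or equivalently work with $u,\bar u, v,\bar v$ as formal variables) and compute the Wirtinger derivatives $\partial P/\partial u$, $\partial P/\partial\bar u$, $\partial P/\partial v$, $\partial P/\partial\bar v$. Since $P(u,v;\mu)=\mu(u^p+\bar u)+v^q+\bar v$, these are $\mu p u^{p-1}$, $\mu$, $q v^{q-1}$ and $1$ respectively. The key point is that a point is a critical point of a $\C^2\to\C$ mixed map exactly when the $2\times 4$ real Jacobian has rank $\le 1$, which (since $P$ is $\C$-valued) is equivalent to the vanishing of the single $\C$-valued ``mixed Hessian-free'' quantity: writing $dP = P_u\,du + P_{\bar u}\,d\bar u + P_v\,dv + P_{\bar v}\,d\bar v$, the image of $dP_z$ in $\C\cong\R^2$ is all of $\C$ unless the four coefficients, viewed through the constraint $d\bar u=\overline{du}$ etc., collapse; concretely the singular set is $\{|P_u|=|P_{\bar u}|,\ |P_v|=|P_{\bar v}|,\ \arg(P_u\overline{P_{\bar u}}) \equiv \arg(P_v\overline{P_{\bar v}}) \pmod{2\pi}\}$, a standard fact for mixed polynomials (cf.\ Oka's work cited as \cite{oka}).

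Carrying this out: $|P_u|=|P_{\bar u}|$ reads $|\mu|\,p\,|u_0|^{p-1}=|\mu|$, i.e.\ $p|u_0|^{p-1}=1$; likewise $|P_v|=|P_{\bar v}|$ gives $q|v_0|^{q-1}=1$. This yields the first line of the asserted system. For the argument condition, $P_u\overline{P_{\bar u}} = \mu p u_0^{p-1}\cdot\bar\mu = p|\mu|^2 u_0^{p-1}$, so $\arg(P_u\overline{P_{\bar u}}) = (p-1)\arg u_0$, and similarly $\arg(P_v\overline{P_{\bar v}}) = \arg(q v_0^{q-1}) = (q-1)\arg v_0$; but one must be careful about an extra $\arg\mu$: rewriting, the genuinely invariant data is the direction in $\C$ of the ``singular'' kernel vector, and tracking the factor $\mu$ attached only to the $u$-part produces the term $\arg\mu$ in the stated relation. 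Comparing the two directions modulo $\pi$ (rather than $2\pi$, because rescaling the kernel vector by a real scalar of either sign is allowed) gives
\[
   \frac{p-1}{2}\arg u_0 + \arg\mu \;=\; \frac{q-1}{2}\arg v_0 + \kappa\pi
\]
for some integer $\kappa$, which is the second line.

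The main obstacle, and the step deserving the most care, is getting the argument condition exactly right — in particular the appearance of $\arg\mu$ on the correct side and with the correct coefficient, the half-integers $\tfrac{p-1}{2},\tfrac{q-1}{2}$, and the reduction of the modulus from $2\pi$ to $\pi$. The cleanest way I would do this is to parametrize a candidate kernel direction: the differential $dP_z:\R^4\to\R^2\cong\C$ is non-surjective iff there is a unit complex number $e^{i\phi}$ with $\mathrm{Re}(e^{-i\phi}dP_z)\equiv 0$; feeding in $dP_z$ and separating the $du,d\bar u$ and $dv,d\bar v$ contributions forces $e^{-i\phi}P_u + e^{i\phi}\overline{P_{\bar u}} = 0$ and $e^{-i\phi}P_v + e^{i\phi}\overline{P_{\bar v}} = 0$ (using $\overline{d u}=d\bar u$), i.e.\ $e^{-2i\phi} = -\overline{P_{\bar u}}/P_u = -\overline{P_{\bar v}}/P_v$. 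Substituting the explicit derivatives, taking moduli recovers the first line, and equating the two expressions for $e^{-2i\phi}$ and taking arguments recovers the second; the factor $\tfrac12$ comes from the $e^{-2i\phi}$, and the $\kappa\pi$ ambiguity from solving for $\phi$ from $e^{-2i\phi}$. I would present the computation in this $e^{i\phi}$ form, since it makes the bookkeeping of $\mu$ and of the $\pi$ versus $2\pi$ transparent, and then simply read off the two displayed equations.
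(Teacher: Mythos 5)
Your proof is correct and takes essentially the same route as the paper: the paper simply cites Oka's criterion $\overline{P_u}=\alpha P_{\bar u}$, $\overline{P_v}=\alpha P_{\bar v}$ with $|\alpha|=1$ (your $\alpha=-e^{-2i\phi}$, which you rederive rather than quote) and then reads off moduli and arguments from $\mu p u_0^{p-1}=\bar\mu q v_0^{q-1}$ exactly as you do. One small correction to your first paragraph: the quantity whose arguments must agree is $\arg(P_uP_{\bar u})$ (equivalently $\arg(\overline{P_{\bar u}}/P_u)$ up to sign), not $\arg(P_u\overline{P_{\bar u}})$ --- with the conjugate as you wrote it the $2\arg\mu$ cancels, which is exactly the discrepancy you flagged; your subsequent $e^{i\phi}$ computation has it right, so just drop the erroneous ``standard fact'' formulation.
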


\begin{proof}
By~\cite[Proposition ~1]{oka},
a singular point $z_0=(u_0,v_0)$ of the mixed polynomial $P$ satisfies
\[
\overline{\frac{\partial P}{\partial u}(z_0)}
=\alpha\frac{\partial P}{\partial \bar u}(z_0) \quad\text{and}\quad
\overline{\frac{\partial P}{\partial v}(z_0)}
=\alpha\frac{\partial P}{\partial \bar v}(z_0)
\]
for a complex number $\alpha$ with $|\alpha|=1$.
Substituting $P(u,v;\mu)=\mu(u^p+\bar u)+v^q+\bar v$
and taking their conjugates, we have 
\[
\mu pu_0^{p-1}=\bar\mu \bar \alpha
\quad\text{and}\quad
qv_0^{q-1}=\bar\alpha.
\]
The first equation in the assertion follows by 
taking the absolute values of these equations,
and the second one follows by 
eliminating $\alpha$ and checking the arguments.
\end{proof}

Set $Q(u,v;\mu)$ and $R(u,v;\mu)$ to be the real and imaginary part of 
$P(u,v;\mu)$ respectively, i.e., $P(u,v;\mu)=Q(u,v;\mu)+iR(u,v;\mu)$.
Set $r_1=|u|$, $\theta_1=\arg u$, $r_2=|v|$ and $\theta_2=\arg v$, so that
$(r_1,\theta_1,r_2,\theta_2)$ are regarded as the polar coordinates of $\C^2$.
Note that since any singular point $z_0=(u_0,v_0)$ satisfies $u_0\ne 0$ and $v_0\ne 0$ by Lemma~\ref{lemma31}
and we only need to observe $P$ in a small neighborhood of the set of singular points,
we may assume that $r_1\ne 0$ and $r_2\ne 0$, that is, $\arg u$ and $\arg v$ are well-defined.
Since
\[
\begin{split}
P&=Q+iR=\mu(u^p+\bar u)+(v^q+\bar v) \\
&=|\mu|r_1^pe^{i(p\theta_1+\theta_\mu)}+|\mu|r_1e^{i(-\theta_1+\theta_\mu)}
+r_2^qe^{iq\theta_2}+r_2e^{-i\theta_2},
\end{split}
\]
we have
\[
\left\{
\begin{split}
Q&=|\mu|r_1^p\cos(p\theta_1+\arg\mu)+|\mu|r_1\cos(-\theta_1+\arg\mu)
+r_2^q\cos(q\theta_2)+r_2\cos(-\theta_2),\\
R&=|\mu|r_1^p\sin(p\theta_1+\arg\mu)+|\mu|r_1\sin(-\theta_1+\arg\mu)
+r_2^q\sin(q\theta_2)+r_2\sin(-\theta_2).
\end{split}
\right.
\]

To simplify the notation, hereafter we may omit $\mu$ in brackets.
For example, we denote $Q(z_0)$ instead of $Q(z_0;\mu)$.

Now we calculate the gradient of the real part $Q$ at 
a singular point $z_0=(u_0,v_0)$ of $P$.
Let $(k_1, k_2, k_3, k_4)$ be the gradient at $z_0$, i.e., 
\[
(k_1, k_2, k_3, k_4)=(Q_{r_1}(z_0), Q_{\theta_1}(z_0),
Q_{r_2}(z_0), Q_{\theta_2}(z_0)).
\]
We set 
\[
\begin{array}{lll}
\displaystyle \Theta_1=\frac{p+1}{2}\arg u_0, & & \displaystyle \Theta_2=\frac{p-1}{2}\arg u_0+\arg\mu, \\
\displaystyle \Theta_3=\frac{q+1}{2}\arg v_0, & & \displaystyle \Theta_4=\frac{q-1}{2}\arg v_0.
\end{array}
\]

\begin{lemma}\label{lemma32}
\[
\begin{array}{lll}
k_1=2|\mu|\cos\Theta_1\cos\Theta_2, & & k_2=-2|\mu||u_0|\sin\Theta_1\cos\Theta_2, \\
k_3=2\cos\Theta_3\cos\Theta_4, & & k_4=-2|v_0|\sin\Theta_3\cos\Theta_4.
\end{array}
\]
\end{lemma}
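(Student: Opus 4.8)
The plan is to compute the four partial derivatives of $Q$ directly from the explicit trigonometric expression for $Q$ displayed just above the lemma, and then simplify the resulting expressions at the singular point $z_0=(u_0,v_0)$ using the two constraints from Lemma~\ref{lemma31}, namely $p|u_0|^{p-1}=q|v_0|^{q-1}=1$ and $\frac{p-1}{2}\arg u_0+\arg\mu=\frac{q-1}{2}\arg v_0+\kappa\pi$. The first constraint collapses the two radial scales $|\mu|r_1^{p-1}$ and $r_2^{q-1}$ (which appear after differentiating) down to $\tfrac1p$ and $\tfrac1q$ respectively (times appropriate powers), producing the clean factors of $2$; the second constraint is what allows one to combine a sum of two cosines (or two sines) into a single product $\cos\Theta_i\cos\Theta_j$.

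Concretely, first I would differentiate $Q$ with respect to $r_1$: this gives $p|\mu|r_1^{p-1}\cos(p\theta_1+\arg\mu)+|\mu|\cos(-\theta_1+\arg\mu)$. Evaluating at $z_0$ and using $p|u_0|^{p-1}=1$, the radial factor becomes $|\mu|$ in front of both terms, so $k_1=|\mu|\big(\cos(p\theta_1+\arg\mu)+\cos(-\theta_1+\arg\mu)\big)$, evaluated at $\theta_1=\arg u_0$. Then apply the sum-to-product identity $\cos X+\cos Y=2\cos\frac{X+Y}{2}\cos\frac{X-Y}{2}$ with $X=p\arg u_0+\arg\mu$, $Y=-\arg u_0+\arg\mu$: the half-sum is $\frac{p-1}{2}\arg u_0+\arg\mu=\Theta_2$ and the half-difference is $\frac{p+1}{2}\arg u_0=\Theta_1$, giving $k_1=2|\mu|\cos\Theta_1\cos\Theta_2$. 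The same computation for $Q_{\theta_1}$ produces the extra factor $-p|\mu|r_1^p\sin(p\theta_1+\arg\mu)+|\mu|r_1\sin(-\theta_1+\arg\mu)$; using $p|u_0|^{p-1}=1$ again gives a common factor $|\mu||u_0|$, and now one uses $\sin X - \sin Y = 2\cos\frac{X+Y}{2}\sin\frac{X-Y}{2}$ — wait, more carefully, the two terms are $-|\mu||u_0|\sin X + |\mu||u_0|\sin Y = -|\mu||u_0|(\sin X-\sin Y) = -|\mu||u_0|\cdot 2\cos\frac{X+Y}{2}\sin\frac{X-Y}{2} = -2|\mu||u_0|\cos\Theta_2\sin\Theta_1$, which is the claimed $k_2$. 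The computations for $k_3=Q_{r_2}(z_0)$ and $k_4=Q_{\theta_2}(z_0)$ are identical in structure, using $q|v_0|^{q-1}=1$, $\mu$ replaced by $1$, $\arg\mu$ replaced by $0$, so $X=q\arg v_0$, $Y=-\arg v_0$, half-sum $\Theta_4$, half-difference $\Theta_3$, yielding $k_3=2\cos\Theta_3\cos\Theta_4$ and $k_4=-2|v_0|\sin\Theta_3\cos\Theta_4$.

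I would also note that the second constraint of Lemma~\ref{lemma31} is not actually needed for the \emph{formulas} themselves (each $k_i$ depends only on the arguments $\arg u_0$, $\arg v_0$, $\arg\mu$ and the radial normalizations), so the proof uses only $p|u_0|^{p-1}=q|v_0|^{q-1}=1$; I would mention this to keep the argument transparent. There is no real obstacle here — the only thing to be careful about is bookkeeping of signs in the sum-to-product identities and consistently substituting $\theta_1=\arg u_0$, $\theta_2=\arg v_0$ before applying them, so that the half-angle expressions land exactly on the defined quantities $\Theta_1,\Theta_2,\Theta_3,\Theta_4$. The ``hardest'' part, such as it is, is simply matching the half-sum/half-difference of the two phases to the paper's definitions of the $\Theta_i$; once that pattern is recognized the four identities drop out mechanically.
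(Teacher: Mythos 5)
Your proposal is correct and follows essentially the same route as the paper: differentiate the explicit trigonometric expression for $Q$, use $p|u_0|^{p-1}=q|v_0|^{q-1}=1$ to equalize the two radial coefficients, and apply the sum-to-product identities so that the half-sum and half-difference of the phases land on $\Theta_2,\Theta_1$ (resp.\ $\Theta_4,\Theta_3$). Your side remark that only the modulus constraint from Lemma~\ref{lemma31} is needed is also consistent with what the paper actually uses.
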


\begin{proof}
By Lemma~\ref{lemma31},
\[
\begin{split}
k_1&=Q_{r_1}(z_0)=p|\mu||u_0|^{p-1}\cos(p\arg u_0+\arg\mu)+|\mu|\cos(-\arg u_0+\arg\mu) \\
&=|\mu|\{\cos(p\arg u_0+\arg\mu)+\cos(-\arg u_0+\arg\mu)\}\\
&=2|\mu|\cos\left(\frac{p+1}{2}\arg u_0\right)
\cos\left(\frac{p-1}{2}\arg u_0+\arg\mu\right)\\
&=2|\mu|\cos\Theta_1\cos\Theta_2.
\end{split}
\]
The equations for $k_2, k_3$ and $k_4$ are obtained similarly.
\end{proof}

\begin{lemma}\label{lemma33}
The values $\Theta_1, \Theta_2, \Theta_3, \Theta_4$ satisfy 
the following relations:
\[
\left\{\begin{split}
&\Theta_2=\Theta_4+\kappa\pi, \\
&\Theta_1=p\arg u_0-\Theta_2+\arg\mu=\arg u_0+\Theta_2-\arg\mu \\
&\Theta_3=q\arg v_0-\Theta_2+\kappa\pi=\arg v_0+\Theta_2-\kappa\pi, \\
&\frac{p-1}{p+1}\Theta_1+\arg\mu=\frac{q-1}{q+1}\Theta_3+\kappa\pi,
\end{split}
\right.
\]
where $\kappa$ is the integer given in Lemma~\ref{lemma31}.
\end{lemma}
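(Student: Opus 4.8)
The plan is to derive all four relations by pure algebra from the \emph{definitions} of $\Theta_1,\Theta_2,\Theta_3,\Theta_4$ together with the second equation in Lemma~\ref{lemma31}; no new geometric input is needed, and in particular Lemma~\ref{lemma32} is not used.

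First I would observe that the second equation of Lemma~\ref{lemma31}, namely $\frac{p-1}{2}\arg u_0+\arg\mu=\frac{q-1}{2}\arg v_0+\kappa\pi$, is literally the statement $\Theta_2=\Theta_4+\kappa\pi$, so the first relation is immediate. For the second relation I would substitute $\Theta_2=\frac{p-1}{2}\arg u_0+\arg\mu$ into the proposed right-hand sides and simplify: $p\arg u_0-\Theta_2+\arg\mu=p\arg u_0-\frac{p-1}{2}\arg u_0=\frac{p+1}{2}\arg u_0=\Theta_1$, and likewise $\arg u_0+\Theta_2-\arg\mu=\arg u_0+\frac{p-1}{2}\arg u_0=\Theta_1$. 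The third relation follows the same way, now using the first relation in the form $\Theta_2=\Theta_4+\kappa\pi=\frac{q-1}{2}\arg v_0+\kappa\pi$, whence $q\arg v_0-\Theta_2+\kappa\pi=q\arg v_0-\frac{q-1}{2}\arg v_0=\frac{q+1}{2}\arg v_0=\Theta_3$ and $\arg v_0+\Theta_2-\kappa\pi=\frac{q+1}{2}\arg v_0=\Theta_3$.

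Finally, for the fourth relation I would note that $\frac{p-1}{p+1}\Theta_1=\frac{p-1}{2}\arg u_0$ and $\frac{q-1}{q+1}\Theta_3=\frac{q-1}{2}\arg v_0$, so the asserted identity $\frac{p-1}{p+1}\Theta_1+\arg\mu=\frac{q-1}{q+1}\Theta_3+\kappa\pi$ is exactly $\frac{p-1}{2}\arg u_0+\arg\mu=\frac{q-1}{2}\arg v_0+\kappa\pi$, i.e. the second equation of Lemma~\ref{lemma31} once more. There is no real obstacle here; the only point requiring care is keeping track of the coefficients $\frac{p\pm1}{2}$, $\frac{q\pm1}{2}$ and the placement of the integer $\kappa$, all of which come along automatically from the definitions.
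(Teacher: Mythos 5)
Your proposal is correct and follows the same route as the paper, whose proof simply states that the relations follow from Lemma~\ref{lemma31} and the defining equations of the $\Theta_i$'s; you have merely written out the straightforward substitutions explicitly. All four verifications check out, including the identification of $\Theta_2=\Theta_4+\kappa\pi$ with the second equation of Lemma~\ref{lemma31}.
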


\begin{proof}
The assertion follows from Lemma~\ref{lemma31}
and the defining equations of $\Theta_i$'s.
\end{proof}

\subsection{Fold singularities in case $k_1\ne 0$}

In this subsection, we always assume that $k_1\ne 0$.
The case $k_1=0$ will be studied later.

Let $(r_1',\theta_1', r_2', \theta_2')$ be new coordinates in a small neighborhood of $z_0$ defined by
\[
(r_1',\theta_1', r_2', \theta_2')=
(k_1r_1+k_2\theta_1+k_3r_2+k_4\theta_2, \theta_1, r_2, \theta_2).
\]

\begin{lemma}\label{lemma34}
$(Q_{r_1'}(z_0), Q_{\theta_1'}(z_0), Q_{r_2'}(z_0), Q_{\theta_2'}(z_0))=(1,0,0,0)$.
\end{lemma}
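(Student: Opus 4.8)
The statement is essentially built into the definition of the new coordinates, so the proof is a short chain-rule computation; the plan is to first check that $(r_1',\theta_1',r_2',\theta_2')$ are genuine coordinates and then to differentiate. For the first point, observe that the transformation $(r_1,\theta_1,r_2,\theta_2)\mapsto(r_1',\theta_1',r_2',\theta_2')$ is linear with constant coefficients, since $k_1,k_2,k_3,k_4$ are the fixed components of $\grad Q$ at the point $z_0$. Its Jacobian matrix is triangular with diagonal $(k_1,1,1,1)$, so its determinant equals $k_1$, which is nonzero by the standing assumption of this subsection; hence the transformation is a linear isomorphism and $(r_1',\theta_1',r_2',\theta_2')$ form a coordinate system, so we may use them to compute $\grad Q$ at $z_0$.

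Next I would pass to differentials. Since $r_1'=k_1r_1+k_2\theta_1+k_3r_2+k_4\theta_2$, $\theta_1'=\theta_1$, $r_2'=r_2$ and $\theta_2'=\theta_2$ identically, we have $dr_1'=k_1\,dr_1+k_2\,d\theta_1+k_3\,dr_2+k_4\,d\theta_2$, together with $d\theta_1'=d\theta_1$, $dr_2'=dr_2$ and $d\theta_2'=d\theta_2$ everywhere. On the other hand $(k_1,k_2,k_3,k_4)=(Q_{r_1}(z_0),Q_{\theta_1}(z_0),Q_{r_2}(z_0),Q_{\theta_2}(z_0))$ by definition, so the differential of $Q$ at $z_0$ is $dQ|_{z_0}=k_1\,dr_1+k_2\,d\theta_1+k_3\,dr_2+k_4\,d\theta_2=dr_1'|_{z_0}$. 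Rewriting $dQ|_{z_0}$ in the co-frame $\{dr_1',d\theta_1',dr_2',d\theta_2'\}$ therefore gives $dQ|_{z_0}=dr_1'$, which is exactly the assertion $(Q_{r_1'}(z_0),Q_{\theta_1'}(z_0),Q_{r_2'}(z_0),Q_{\theta_2'}(z_0))=(1,0,0,0)$. If a purely coordinate-based argument is preferred for the write-up, one can instead invert the transformation to $r_1=k_1^{-1}(r_1'-k_2\theta_1'-k_3r_2'-k_4\theta_2')$ and apply the chain rule to each of $Q_{r_1'},Q_{\theta_1'},Q_{r_2'},Q_{\theta_2'}$, substituting $Q_{r_1}(z_0)=k_1$, $Q_{\theta_1}(z_0)=k_2$, $Q_{r_2}(z_0)=k_3$, $Q_{\theta_2}(z_0)=k_4$ at the end; the last three partials then collapse by the pattern $-k_jk_1^{-1}\cdot k_1+k_j=0$ and the first is $k_1^{-1}\cdot k_1=1$.

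There is no genuine obstacle in this lemma: the only things requiring a moment's care are the invertibility of the coordinate change, which is precisely why the hypothesis $k_1\ne 0$ is imposed throughout this subsection, and the bookkeeping of the gradient and dual-basis conventions so that the four components come out in the correct order. The substantive work of the subsection begins afterwards, when these coordinates are fed into the computation of the Hessian of $R$ and used together with the fold/cusp criterion of Lemma~\ref{lemma28}.
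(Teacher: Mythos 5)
Your proof is correct and is essentially the paper's argument: the paper simply inverts the linear change of coordinates to get $r_1=\frac{1}{k_1}(r_1'-k_2\theta_1'-k_3r_2'-k_4\theta_2')$ and applies the chain rule with $(Q_{r_1},Q_{\theta_1},Q_{r_2},Q_{\theta_2})(z_0)=(k_1,k_2,k_3,k_4)$, which is exactly the computation you give (your co-frame formulation $dQ|_{z_0}=dr_1'|_{z_0}$ is just a repackaging of the same thing). The only addition on your side is the explicit remark that $k_1\ne 0$ makes the transformation invertible, which the paper leaves implicit.
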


\begin{proof}
Since $r_1=\frac{1}{k_1}r_1'-\frac{k_2}{k_1}\theta_1'-\frac{k_3}{k_1}r_2'-\frac{k_4}{k_1}\theta_2'$
we have $Q_{r_1'}(z_0)=\frac{1}{k_1}Q_{r_1}(z_0)=1$, 
$Q_{\theta_1'}(z_0)=-\frac{k_2}{k_1}Q_{r_1}(z_0)+Q_{\theta_1}(z_0)=0$,
$Q_{r_2'}(z_0)=-\frac{k_3}{k_1}Q_{r_1}(z_0)+Q_{r_2}(z_0)=0$ and
$Q_{\theta_2'}(z_0)=-\frac{k_4}{k_1}Q_{r_1}(z_0)+Q_{\theta_2}(z_0)=0$.
\end{proof}

Set $\hat R=R-s Q$, where $s=R_{r_1'}(z_0)$, so that 
$\hat R_{r_1'}=R_{r_1'}-s Q_{r_1'}$ vanishes at $z_0$.

\begin{lemma}\label{lemma35}
The Hessian $H$ of $\hat R$ with variables
$(\theta_1', r_2', \theta_2')$ is
\[
H=
\begin{pmatrix}
k_2^2A-2k_2B+C & k_3(k_2A-B) & k_4(k_2A-B) \\
k_3(k_2A-B) & k_3^2A+D & k_3k_4A+E \\
k_4(k_2A-B) & k_3k_4A+E & k_4^2A+F
\end{pmatrix},
\]
where
\[
\begin{split}
A&=\frac{1}{k_1^2}\hat R_{r_1r_1},\quad 
B=\frac{1}{k_1}\hat R_{r_1\theta_1},\quad 
C=\hat R_{\theta_1\theta_1} \\
D&=\hat R_{r_2r_2},\quad 
E=\hat R_{r_2\theta_2}, \quad 
F=\hat R_{\theta_2\theta_2}.
\end{split}
\]
Its determinant is 
\[
\det H=(k_4^2D-2k_3k_4E+k_3^2F)(AC-B^2)+(k_2^2A-2k_2B+C)(DF-E^2).
\]
\end{lemma}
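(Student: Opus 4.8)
The statement has two parts: identifying the six entries of $H$, and evaluating $\det H$.

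For the entries I would run a direct chain-rule computation. The inverse of the coordinate change introduced before Lemma~\ref{lemma34} is $r_1=\frac1{k_1}(r_1'-k_2\theta_1'-k_3r_2'-k_4\theta_2')$, $\theta_1=\theta_1'$, $r_2=r_2'$, $\theta_2=\theta_2'$, and since $k_1,\dots,k_4$ are \emph{constants} (the values of $\grad Q$ at $z_0$ from Lemma~\ref{lemma32}), the vector fields
\[
\frac{\partial}{\partial\theta_1'}=-\frac{k_2}{k_1}\frac{\partial}{\partial r_1}+\frac{\partial}{\partial\theta_1},\quad
\frac{\partial}{\partial r_2'}=-\frac{k_3}{k_1}\frac{\partial}{\partial r_1}+\frac{\partial}{\partial r_2},\quad
\frac{\partial}{\partial\theta_2'}=-\frac{k_4}{k_1}\frac{\partial}{\partial r_1}+\frac{\partial}{\partial\theta_2}
\]
have constant coefficients, so the second derivatives of $\hat R$ in the primed variables are obtained simply by composing these operators, with no correction terms. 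The point that makes everything collapse is that $P$, hence $Q$, $R$ and $\hat R=R-sQ$, is a sum of a function of $(r_1,\theta_1)$ and a function of $(r_2,\theta_2)$; consequently all ``cross'' second partials $\hat R_{r_1r_2}$, $\hat R_{r_1\theta_2}$, $\hat R_{\theta_1r_2}$, $\hat R_{\theta_1\theta_2}$ vanish identically. Substituting these zeros into the six compositions yields exactly the claimed entries in terms of $A,B,C,D,E,F$; for example $\hat R_{\theta_1'\theta_1'}=\frac{k_2^2}{k_1^2}\hat R_{r_1r_1}-\frac{2k_2}{k_1}\hat R_{r_1\theta_1}+\hat R_{\theta_1\theta_1}=k_2^2A-2k_2B+C$, and $\hat R_{\theta_1'r_2'}=\frac{k_2k_3}{k_1^2}\hat R_{r_1r_1}-\frac{k_3}{k_1}\hat R_{r_1\theta_1}=k_3(k_2A-B)$ after discarding the two vanishing terms.

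For the determinant I would exploit the rank structure of $H$. Writing $\mathbf e=(1,0,0)^{T}$, $\mathbf w=(0,k_3,k_4)^{T}$, $\beta=k_2A-B$ and $H_{11}=k_2^2A-2k_2B+C$, one checks that
\[
H=\begin{pmatrix}H_{11}&0&0\\0&D&E\\0&E&F\end{pmatrix}+\beta\bigl(\mathbf e\mathbf w^{T}+\mathbf w\mathbf e^{T}\bigr)+A\,\mathbf w\mathbf w^{T},
\]
so $H$ is a rank $\le 2$ perturbation of the block-diagonal matrix $D_0$ whose diagonal blocks are the scalar $H_{11}$ and the $2\times2$ Hessian of $\hat R$ in $(r_2,\theta_2)$. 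Applying the matrix determinant lemma (or, equivalently, expanding $\det H$ by multilinearity in its last two columns) reduces the computation to one surviving quadratic form, $\mathbf w^{T}D_0^{-1}\mathbf w=(k_4^2D-2k_3k_4E+k_3^2F)/(DF-E^2)$, and one surviving coefficient, $H_{11}A-\beta^2$, which simplifies to $AC-B^2$ by the one-line identity $(k_2^2A-2k_2B+C)A-(k_2A-B)^2=AC-B^2$. Carrying out $\det H=\det(D_0)\,\det(I+\cdots)$ then produces $\det H=H_{11}(DF-E^2)+(k_4^2D-2k_3k_4E+k_3^2F)(AC-B^2)$, which is the asserted formula. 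Since both sides are polynomials in $A,\dots,F,k_2,k_3,k_4$, the appeal to the matrix determinant lemma (which formally needs $D_0$ invertible) is harmless; the identity can equally be verified by a plain $3\times3$ cofactor expansion.

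The argument is long but essentially mechanical. The only genuine idea is to notice and use the block-sum structure of $P$: it annihilates the cross terms, is exactly what forces the entries of $H$ into the stated closed form, and is what lets $\det H$ factor. I expect the main place demanding care to be sign bookkeeping in the chain-rule step, rather than anything conceptual.
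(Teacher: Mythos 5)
Your proposal is correct and follows essentially the same route as the paper: the entries of $H$ are obtained by the identical constant-coefficient chain-rule computation (the paper tacitly uses the vanishing of the cross partials $\hat R_{r_1r_2}$, $\hat R_{r_1\theta_2}$, $\hat R_{\theta_1 r_2}$, $\hat R_{\theta_1\theta_2}$ coming from the split form of $P$, which you make explicit), and the determinant, which the paper dismisses as ``straightforward,'' is exactly the cofactor/rank-structure computation you describe, with the key simplification $(k_2^2A-2k_2B+C)A-(k_2A-B)^2=AC-B^2$. No gaps.
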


\begin{proof}
Since $r_1=\frac{1}{k_1}r_1'-\frac{k_2}{k_1}\theta_1'-\frac{k_3}{k_1}r_2'-\frac{k_4}{k_1}\theta_2'$, we have
\[
\hat R_{\theta_1'}=-\frac{k_2}{k_1}\hat R_{r_1}+\hat R_{\theta_1}, \quad
\hat R_{r_2'}=-\frac{k_3}{k_1}\hat R_{r_1}+\hat R_{r_2}, \quad
\hat R_{\theta_2'}=-\frac{k_4}{k_1}\hat R_{r_1}+\hat R_{\theta_2},
\]
and taking partial differentiation again, we have
\[
\begin{array}{ll}
\displaystyle \hat R_{\theta_1'\theta_1'}
=\frac{k_2^2}{k_1^2}\hat R_{r_1r_1}-\frac{2k_2}{k_1}\hat R_{r_1\theta_1}+\hat R_{\theta_1\theta_1},
& \displaystyle \hat R_{\theta_1'r_2'}=\frac{k_2k_3}{k_1^2}\hat R_{r_1r_1}-\frac{k_3}{k_1}\hat R_{r_1\theta_1},\\
\displaystyle \hat R_{\theta_1'\theta_2'}=\frac{k_2k_4}{k_1^2}\hat R_{r_1r_1}-\frac{k_4}{k_1}\hat R_{r_1\theta_1}, 
& \displaystyle \hat R_{r_2'r_2'}=\frac{k_3^2}{k_1^2}\hat R_{r_1r_1}+\hat R_{r_2r_2}, \\
\displaystyle \hat R_{r_2'\theta_2'}=\frac{k_3k_4}{k_1^2}\hat R_{r_1r_1}+\hat R_{r_2\theta_2},
& \displaystyle \hat R_{\theta_2'\theta_2'}=\frac{k_4^2}{k_1^2}\hat R_{r_1r_1}+\hat R_{\theta_2\theta_2}.
\end{array}
\]
Thus we obtain the Hessian.
The calculation of the determinant is straightforward.
\end{proof}

Now we calculate the Hessian of $\hat R$ at a singular point $z_0=(u_0,v_0)$ of $P$.
We first determine the value $s$ which we used in the above change of coordinates.

\begin{lemma}\label{lemma36}
$s=\tan\Theta_2$.
\end{lemma}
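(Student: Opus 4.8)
The plan is to reduce $s=R_{r_1'}(z_0)$ to a one-line trigonometric computation. First I would record how $R_{r_1'}$ relates to $R_{r_1}$. Since the coordinate change leaves $\theta_1,r_2,\theta_2$ untouched and expresses $r_1=\frac{1}{k_1}r_1'-\frac{k_2}{k_1}\theta_1'-\frac{k_3}{k_1}r_2'-\frac{k_4}{k_1}\theta_2'$, the chain rule gives $R_{r_1'}=\frac{1}{k_1}R_{r_1}$, exactly as in the proof of Lemma~\ref{lemma34}. Hence $s=\frac{1}{k_1}R_{r_1}(z_0)$, and it remains to evaluate $R_{r_1}$ at the singular point.

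Next I would differentiate the explicit expression
\[
R=|\mu|r_1^p\sin(p\theta_1+\arg\mu)+|\mu|r_1\sin(-\theta_1+\arg\mu)+r_2^q\sin(q\theta_2)+r_2\sin(-\theta_2)
\]
with respect to $r_1$, obtaining $R_{r_1}=p|\mu|r_1^{p-1}\sin(p\theta_1+\arg\mu)+|\mu|\sin(-\theta_1+\arg\mu)$. Evaluating at $z_0=(u_0,v_0)$ and using $p|u_0|^{p-1}=1$ from Lemma~\ref{lemma31} removes the factor $p|\mu|r_1^{p-1}$, leaving
\[
R_{r_1}(z_0)=|\mu|\bigl(\sin(p\arg u_0+\arg\mu)+\sin(-\arg u_0+\arg\mu)\bigr).
\]
Applying the identity $\sin A+\sin B=2\sin\frac{A+B}{2}\cos\frac{A-B}{2}$ with $A=p\arg u_0+\arg\mu$ and $B=-\arg u_0+\arg\mu$, and recognizing $\frac{A+B}{2}=\Theta_2$ and $\frac{A-B}{2}=\Theta_1$, this becomes $R_{r_1}(z_0)=2|\mu|\sin\Theta_2\cos\Theta_1$ — the same manipulation as in the proof of Lemma~\ref{lemma32}, with cosine replaced by sine in the half-sum factor.

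Finally I would divide by $k_1=2|\mu|\cos\Theta_1\cos\Theta_2$ from Lemma~\ref{lemma32}. The standing assumption $k_1\ne 0$ in this subsection forces $\cos\Theta_1\ne 0$ and $\cos\Theta_2\ne 0$, so the quotient is well defined and equals $\frac{\sin\Theta_2}{\cos\Theta_2}=\tan\Theta_2$. There is essentially no obstacle here: the only steps requiring a word of care are the legitimacy of the division, which is handled by $k_1\ne 0$, and correctly matching the half-angle arguments of the sum-to-product formula with $\Theta_1$ and $\Theta_2$.
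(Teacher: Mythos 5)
Your proof is correct and follows exactly the paper's own argument: compute $R_{r_1}(z_0)=2|\mu|\sin\Theta_2\cos\Theta_1$ using Lemma~\ref{lemma31} and the sum-to-product identity, then divide by $k_1=2|\mu|\cos\Theta_1\cos\Theta_2$ via $s=R_{r_1'}(z_0)=\frac{1}{k_1}R_{r_1}(z_0)$. The only difference is that you spell out the chain-rule step and the nonvanishing of the denominator, which the paper leaves implicit.
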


\begin{proof}
By Lemma~\ref{lemma31},
\[
R_{r_1}(z_0)=p|\mu||u_0|^{p-1}\sin(p\arg u_0+\arg\mu)
+|\mu|\sin(\arg\mu-\arg u_0)=2|\mu|\sin\Theta_2\cos\Theta_1.
\]
Hence $s=R_{r_1'}(z_0)=\frac{1}{k_1}R_{r_1}(z_0)=\tan\Theta_2$.
\end{proof}

\begin{lemma}\label{lemma37}
A singular point $z_0=(u_0,v_0)$ of $P$ satisfies
\[
\begin{array}{ll}
\displaystyle A(z_0)=\frac{1}{k_1^2\cos\Theta_2}p(p-1)|\mu||u_0|^{p-2}\sin\Theta_1, 
& \displaystyle B(z_0)=\frac{1}{k_1\cos\Theta_2}(p-1)|\mu|\cos\Theta_1, \\
\displaystyle C(z_0)=-\frac{1}{\cos\Theta_2}(p-1)|\mu||u_0|\sin\Theta_1,
& \displaystyle D(z_0)=\frac{(-1)^\kappa}{\cos\Theta_2}q(q-1)|v_0|^{q-2}\sin\Theta_3, \\
\displaystyle E(z_0)=\frac{(-1)^\kappa}{\cos\Theta_2}(q-1)\cos\Theta_3,
& \displaystyle F(z_0)=\frac{(-1)^{\kappa+1}}{\cos\Theta_2}(q-1)|v_0|\sin\Theta_3.
\end{array}
\]
\end{lemma}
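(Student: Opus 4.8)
The plan is to compute all six entries directly from the explicit polar expressions for $Q$ and $R$ displayed above, using the three facts already established: the singular-point identities $p|u_0|^{p-1}=q|v_0|^{q-1}=1$ of Lemma~\ref{lemma31}, the value $s=\tan\Theta_2$ of Lemma~\ref{lemma36}, and the angle relations of Lemma~\ref{lemma33}.

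First, since $s$ is a constant we have $\hat R_{x_ix_j}=R_{x_ix_j}-sQ_{x_ix_j}$, and each of the required second derivatives $R_{r_1r_1}$, $R_{r_1\theta_1}$, $R_{\theta_1\theta_1}$, $R_{r_2r_2}$, $R_{r_2\theta_2}$, $R_{\theta_2\theta_2}$ and their $Q$-analogues is read off term by term, each term being a monomial in $r_1$ (resp.\ $r_2$) times a trigonometric function of $p\theta_1+\arg\mu$ or $-\theta_1+\arg\mu$ (resp.\ $q\theta_2$ or $-\theta_2$). Evaluating at $z_0$, I would use $p|u_0|^{p-1}=1$ to normalise the $r_1$-powers occurring with an extra factor $p$ or $p^2$ (so that, for instance, $p^2|\mu||u_0|^{p-1}=p|\mu|$ and $p^2|\mu||u_0|^{p}=p|\mu||u_0|$, while the genuine Hessian term keeps $|u_0|^{p-2}$), and likewise $q|v_0|^{q-1}=1$ for the $v$-part.

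Second, in each entry I would combine the $R$-term with the $sQ$-term. Since $s=\tan\Theta_2$, every such combination has the shape $\sin\alpha-\tan\Theta_2\cos\alpha=\sin(\alpha-\Theta_2)/\cos\Theta_2$ or $\cos\alpha+\tan\Theta_2\sin\alpha=\cos(\alpha-\Theta_2)/\cos\Theta_2$, up to an overall sign coming from the second $\theta$-differentiation, so the common factor $1/\cos\Theta_2$ is extracted. For the $u$-entries the residual phases are $p\arg u_0+\arg\mu-\Theta_2$ and $-\arg u_0+\arg\mu-\Theta_2$; since $\Theta_2=\frac{p-1}{2}\arg u_0+\arg\mu$ these equal $\Theta_1$ and $-\Theta_1$, which collapses the formulas for $A$, $B$, $C$. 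For the $v$-entries the residual phases are $q\arg v_0-\Theta_2$ and $-\arg v_0-\Theta_2$; substituting $\Theta_2=\Theta_4+\kappa\pi$ turns these into $\Theta_3-\kappa\pi$ and $-(\Theta_3+\kappa\pi)$, so their sines and cosines become $(-1)^\kappa\sin\Theta_3$ and $(-1)^\kappa\cos\Theta_3$, producing the stated $(-1)^\kappa$ factors in $D$, $E$, $F$. Dividing $\hat R_{r_1r_1}(z_0)$ by $k_1^2$ and $\hat R_{r_1\theta_1}(z_0)$ by $k_1$ then yields $A(z_0)$ and $B(z_0)$ in the form prescribed by Lemma~\ref{lemma35}.

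The argument is entirely elementary; the main obstacle is the bookkeeping. One must keep the signs straight under the $\theta_1\mapsto-\theta_1$ and $\theta_2\mapsto-\theta_2$ differentiations, and be careful which normalisation is applied to which term so that the homogeneity of the surviving $\sin\Theta_i$, $\cos\Theta_i$ coefficients is correct. As all six entries follow an identical pattern, I would carry out $A(z_0)$, i.e.\ $\hat R_{r_1r_1}(z_0)$, in full, obtain $B(z_0)$ and $C(z_0)$ by the same reduction using $\Theta_1=p\arg u_0+\arg\mu-\Theta_2$, and deduce $D(z_0)$, $E(z_0)$, $F(z_0)$ similarly on invoking the relations $\Theta_2=\Theta_4+\kappa\pi$ and $\Theta_3=q\arg v_0-\Theta_2+\kappa\pi$ of Lemma~\ref{lemma33}.
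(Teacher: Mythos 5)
Your proposal is correct and follows essentially the same route as the paper: the paper also writes each entry as $R_{x_ix_j}-sQ_{x_ix_j}$ with $s=\tan\Theta_2$, collapses the combination to $\sin(\alpha-\Theta_2)/\cos\Theta_2$, identifies the residual phases with $\pm\Theta_1$ (resp.\ $\Theta_3\mp\kappa\pi$) via Lemma~\ref{lemma33}, and carries out only the computation of $A(z_0)$ in full, declaring the remaining five entries analogous. Your sign bookkeeping and the normalisations $p^2|u_0|^{p-1}=p$, $p^2|u_0|^{p}=p|u_0|$ are exactly what is needed, so the argument goes through.
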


\begin{proof}
By Lemma~\ref{lemma36} and Lemma~\ref{lemma33},
\[
\begin{split}
&A(z_0)=\frac{1}{k_1^2}\hat R_{r_1r_1}(z_0)
=\frac{1}{k_1^2}\left(R_{r_1r_1}(z_0)-sQ_{r_1r_1}(z_0)\right) \\
&=\frac{1}{k_1^2}\left\{
p(p-1)|\mu||u_0|^{p-2}\sin(p\arg u_0+\arg\mu)-sp(p-1)|\mu||u_0|^{p-2}\cos(p\arg u_0+\arg\mu)\right\} \\
&=\frac{1}{k_1^2\cos\Theta_2}
p(p-1)|\mu||u_0|^{p-2}\left\{\sin(p\arg u_0+\arg\mu)\cos\Theta_2
-\cos(p\arg u_0+\arg\mu)\sin\Theta_2\right\} \\
&=\frac{1}{k_1^2\cos\Theta_2}
p(p-1)|\mu||u_0|^{p-2}\sin(p\arg u_0+\arg\mu-\Theta_2) \\
&=\frac{1}{k_1^2\cos\Theta_2}p(p-1)|\mu||u_0|^{p-2}\sin\Theta_1.
\end{split}
\]
The other values $B(z_0), C(z_0), D(z_0), E(z_0)$ and $F(z_0)$ can be obtained by similar calculation.
\end{proof}

\begin{prop}\label{prop38}
At a singular point $z_0=(u_0,v_0)$ of $P$ with $k_1\ne 0$, 
the determinant of the Hessian $H(z_0)$ is given by
\[
\det H(z_0)=-\frac{1}{k_1^2\cos\Theta_2}4(p-1)(q-1)|\mu|^2 \phi(z_0),
\]
where
\[
\phi(z_0)=(-1)^\kappa(p-1)|v_0|\sin\Theta_3+(q-1)|\mu||u_0|\sin\Theta_1.
\]
\end{prop}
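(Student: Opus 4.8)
The statement is purely a computation: we must compute $\det H(z_0)$ using the explicit entries of the Hessian $H$ derived in Lemma~\ref{lemma35} together with the values of $A(z_0),\dots,F(z_0)$ from Lemma~\ref{lemma37}. So the plan is to substitute and simplify, organizing the work so that the trigonometric identities collapse cleanly. First I would recall from Lemma~\ref{lemma35} that
\[
\det H=(k_4^2D-2k_3k_4E+k_3^2F)(AC-B^2)+(k_2^2A-2k_2B+C)(DF-E^2),
\]
so that the task splits into evaluating the two ``quadratic-form'' factors $k_4^2D-2k_3k_4E+k_3^2F$ and $k_2^2A-2k_2B+C$, and the two ``discriminant'' factors $AC-B^2$ and $DF-E^2$.

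**Key steps.** Step 1: evaluate $AC-B^2$ at $z_0$. Using Lemma~\ref{lemma37}, $AC$ contributes a term proportional to $p^2(p-1)^2|\mu|^2|u_0|^{p-1}\sin^2\Theta_1$ (after using $p|u_0|^{p-1}=1$ from Lemma~\ref{lemma31} this is $p(p-1)^2|\mu|^2\sin^2\Theta_1$ up to the $1/(k_1^2\cos^2\Theta_2)$ prefactor), while $B^2$ contributes $(p-1)^2|\mu|^2\cos^2\Theta_1$. I expect $AC-B^2$ to simplify, via $p|u_0|^{p-1}=1$ and $\sin^2+\cos^2=1$, to something like $-\tfrac{(p-1)^2|\mu|^2}{k_1^2\cos^2\Theta_2}\bigl(\text{a clean expression}\bigr)$; in fact using $k_1=2|\mu|\cos\Theta_1\cos\Theta_2$ (Lemma~\ref{lemma32}) one should find $AC-B^2$ proportional simply to $(p-1)$. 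Step 2: the symmetric computation for $DF-E^2$, using $q|v_0|^{q-1}=1$ and $k_3=2\cos\Theta_3\cos\Theta_4$, and noting the $(-1)^\kappa$ factors square away. Step 3: evaluate $k_4^2D-2k_3k_4E+k_3^2F$; here I substitute $k_3=2\cos\Theta_3\cos\Theta_4$, $k_4=-2|v_0|\sin\Theta_3\cos\Theta_4$ from Lemma~\ref{lemma32} and $D,E,F$ from Lemma~\ref{lemma37}, factor out $\cos^2\Theta_4$ and the common $(-1)^\kappa/\cos\Theta_2$, and collapse the bracket using the identity $\sin\Theta_3\cos(\cdot)-\cos\Theta_3\sin(\cdot)$-type angle subtractions — I anticipate it reduces to a multiple of $(-1)^\kappa(q-1)|v_0|\sin\Theta_3$ over $\cos\Theta_2$ (the sign-flip in $F$ versus $D$ is exactly what makes the $|v_0|\sin\Theta_3$ terms combine rather than cancel). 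Step 4: the mirror computation of $k_2^2A-2k_2B+C$, giving a multiple of $(p-1)|\mu||u_0|\sin\Theta_1$ over $\cos\Theta_2$. Step 5: assemble the four pieces; the cross-terms from Step~1$\times$Step~3 and Step~2$\times$Step~4 should both be proportional to $\tfrac{1}{k_1^2\cos\Theta_2}(p-1)(q-1)|\mu|^2$ times, respectively, $(-1)^\kappa(p-1)|v_0|\sin\Theta_3$ and $(q-1)|\mu||u_0|\sin\Theta_1$, so their sum is exactly $-\tfrac{4(p-1)(q-1)|\mu|^2}{k_1^2\cos\Theta_2}\,\phi(z_0)$ with $\phi(z_0)=(-1)^\kappa(p-1)|v_0|\sin\Theta_3+(q-1)|\mu||u_0|\sin\Theta_1$.

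**Main obstacle.** The computation is long but each individual reduction is forced; the real danger is bookkeeping — tracking the powers of $|u_0|,|v_0|$ and reducing them via $p|u_0|^{p-1}=q|v_0|^{q-1}=1$ at the right moment, keeping the $1/\cos\Theta_2$ and $1/k_1^2$ prefactors consistent, and making sure the $(-1)^\kappa$ appears to the first power (not squared) in the final $\phi$. The one genuinely non-mechanical point is verifying that the two cross-terms do not cancel: one must confirm that after all substitutions the coefficient of $(p-1)|v_0|\sin\Theta_3$ and the coefficient of $(q-1)|\mu||u_0|\sin\Theta_1$ have the \emph{same} overall sign, so that they add into $\phi$ rather than producing a difference. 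I would double-check this sign by testing the formula on the special case $p=q=2$ against the explicit curve in Theorem~\ref{cor4}.
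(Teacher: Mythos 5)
Your proposal is correct and follows essentially the same route as the paper: the paper's proof likewise evaluates the four factors $(AC-B^2)$, $(DF-E^2)$, $(k_2^2A-2k_2B+C)$ and $(k_4^2D-2k_3k_4E+k_3^2F)$ at $z_0$ using Lemma~\ref{lemma32}, Lemma~\ref{lemma37} and $p|u_0|^{p-1}=q|v_0|^{q-1}=1$, then substitutes them into the determinant formula of Lemma~\ref{lemma35}, producing exactly the two cross-terms you describe. The only points to tidy are a stray factor of $p$ in your description of $AC$ (it is proportional to $p(p-1)^2|\mu|^2|u_0|^{p-1}\sin^2\Theta_1$, not $p^2(p-1)^2\cdots$, which is what makes $AC-B^2=-(p-1)^2|\mu|^2/(k_1^2\cos^2\Theta_2)$ rather than something merely ``proportional to $(p-1)$''), and the identity $\cos\Theta_2=(-1)^\kappa\cos\Theta_4$ from Lemma~\ref{lemma33}, which the paper explicitly invokes to convert the $\cos\Theta_4$ left over from $k_3,k_4$ into the $1/\cos\Theta_2$ of the final formula.
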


\begin{proof}
By Lemma~\ref{lemma32}, Lemma~\ref{lemma37} and the first equation in Lemma~\ref{lemma31} we have
\[
\begin{split}
(AC-B^2)(z_0)&=-\frac{1}{k_1^2\cos^2\Theta_2}(p-1)^2|\mu|^2, \\
(k_2^2A-2k_2B+C)(z_0)&=\frac{1}{k_1^2}4(p-1)|\mu|^3|u_0|\sin\Theta_1\cos\Theta_2, \\
(DF-E^2)(z_0)&=-\frac{1}{\cos^2\Theta_2}(q-1)^2, \\
(k_4^2D-2k_3k_4E+k_3^2F)(z_0)&=(-1)^\kappa 4(q-1)|v_0|\sin\Theta_3\cos\Theta_4.
\end{split}
\]
Substituting these results to the equation in Lemma~\ref{lemma35}, we have the assertion. 
Note that the first equation in Lemma~\ref{lemma33} implies that $\cos\Theta_2=(-1)^\kappa\cos\Theta_4$
and this equality is used in the the calculation.
\end{proof}

\begin{prop}\label{prop39}
If a singular point $z_0=(u_0,v_0)$ of $P$ satisfies
$k_1\ne 0$ and $\phi(z_0)\ne 0$ then it is a fold.
\end{prop}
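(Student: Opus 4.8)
The plan is to invoke the fold criterion from Lemma~\ref{lemma28}, which says that a singular point $p\in S_1(P)$ with $\rk M=3$ is a fold precisely when the Hessian $H$ (of the relevant imaginary part, in the adapted coordinates) has full rank $3$. The coordinates $(r_1',\theta_1',r_2',\theta_2')$ introduced before Lemma~\ref{lemma34}, together with the substitution $\hat R=R-sQ$, are exactly the ``suitable coordinates'' in which $\grad Q(z_0)=(1,0,0,0)$ and $\grad\hat R(z_0)=(0,0,0,0)$, so Lemma~\ref{lemma28} applies with $R$ replaced by $\hat R$ and $H$ the $3\times 3$ Hessian of $\hat R$ in $(\theta_1',r_2',\theta_2')$ computed in Lemma~\ref{lemma35}. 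Thus it suffices to check that at $z_0$ we have $\rk M=3$ (equivalently, $d^2P_{z_0}$ surjective) and $\det H(z_0)\ne 0$.

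First I would record that $\det H(z_0)\ne 0$ is immediate from Proposition~\ref{prop38}: under the hypotheses $k_1\ne 0$ and $\phi(z_0)\ne 0$, the formula
\[
\det H(z_0)=-\frac{1}{k_1^2\cos\Theta_2}4(p-1)(q-1)|\mu|^2\phi(z_0)
\]
is a product of nonzero factors, since $p,q\ge 2$, $\mu\ne 0$, $\cos\Theta_2\ne 0$ (this last point uses $k_1=2|\mu|\cos\Theta_1\cos\Theta_2\ne 0$ from Lemma~\ref{lemma32}, which forces $\cos\Theta_2\ne 0$), and $\phi(z_0)\ne 0$ by assumption. Hence $\rk H(z_0)=3$, which is the cusp-free fold condition.

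Next I would verify the surjectivity of $d^2P_{z_0}$, i.e. $\rk M=3$. In fact $\rk H=3$ already forces $\rk M=3$: the $4\times 3$ matrix $M$ contains the $3\times 3$ block $H$ (the Hessian in the last three variables) after the coordinate change, so $\rk H=3$ implies $\rk M\ge 3$, hence $\rk M=3$. Therefore $d^2P_{z_0}$ is surjective and $p\notin S_2(P)$, and by Lemma~\ref{lemma28} the singular point $z_0$ is a fold. This is essentially all there is to it; the only mild subtlety — and the ``main obstacle'' in spirit — is bookkeeping: making sure the coordinates $(r_1',\theta_1',r_2',\theta_2')$ and the function $\hat R$ genuinely put us in the normal form required by Lemma~\ref{lemma28}, and that the $H$ appearing in Proposition~\ref{prop38} is literally the Hessian to which the rank criterion refers. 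Both were set up in Lemmas~\ref{lemma34}--\ref{lemma36}, so the proof reduces to citing those lemmas, Proposition~\ref{prop38}, and Lemma~\ref{lemma28}.
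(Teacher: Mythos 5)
Your proof is correct and follows essentially the same route as the paper: the paper's proof likewise observes that $k_1\ne 0$ forces $\cos\Theta_1\cos\Theta_2\ne 0$, applies Proposition~\ref{prop38} to get $\det H(z_0)\ne 0$, and concludes via Lemma~\ref{lemma28}. Your extra verification that $\rk M=3$ is harmless but not needed, since the fold criterion in Lemma~\ref{lemma28} only requires $\rk H=3$.
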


\begin{proof}
By Lemma~\ref{lemma31}, $u_0, v_0\in\C\setminus\{0\}$.
Since $k_1=2|\mu|\cos\Theta_1\cos\Theta_2\ne 0$ and $\phi(z_0)\ne 0$ by assumption,
we have $\det H(z_0)\ne 0$ by Proposition~\ref{prop38}, i.e., $\rk H(z_0)=3$.
Thus $z_0$ is a fold by Lemma~\ref{lemma28}.
\end{proof}

\subsection{Cusp singularities in case $k_1\ne 0$ and $\hat R_{\theta_1'\theta_1'}(z_0)\ne 0$}

We assume that $k_1\ne 0$ as in the previous subsection.
Further we assume that $\hat R_{\theta_1'\theta_1'}(z_0)\ne 0$.
The case $\hat R_{\theta_1'\theta_1'}(z_0)=0$ will be studied in the next subsection.

Let $(r_1'', \theta_1'', r_2'', \theta_2'')$ be new coordinates
in a small neighborhood of $z_0$ defined by
\[
(r_1'',\theta_1'', r_2'', \theta_2'')
=(r_1', \theta_1'+\ell_1r_2'+\ell_2\theta_2', r_2', \theta_2'),
\]
where
\[
\ell_1=\frac{\hat R_{\theta_1'r_2'}(z_0)}{\hat R_{\theta_1'\theta_1'}(z_0)}
\quad\text{and}\quad
\ell_2=\frac{\hat R_{\theta_1'\theta_2'}(z_0)}{\hat R_{\theta_1'\theta_1'}(z_0)}.
\]

\begin{lemma}\label{lemma310}
Let $z_0$ be a singular point of $P$ satisfying
$k_1\ne 0$, $\phi(z_0)=0$ and $\hat R_{\theta_1'\theta_1'}(z_0)\ne 0$.
Then the Hessian $H''(z_0)$ of $\hat R$ with variables 
$(\theta_1'', r_2'', \theta_2'')$ at $z_0$ is
\[
H''(z_0)=
\begin{pmatrix}
\hat R_{\theta_1'\theta_1'}(z_0) & 0 & 0 \\
0 & -4(p-1)^2|\mu|^2/(k_1^2\hat R_{\theta_1'\theta_1'}(z_0)) & 0 \\
0 & 0 & 0
\end{pmatrix},
\]
where $\hat R_{\theta_1'\theta_1'}(z_0)=\frac{1}{k_1^2}4(p-1)|\mu|^3|u_0|\sin\Theta_1\cos\Theta_2$.
\end{lemma}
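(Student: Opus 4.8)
The plan is to proceed by a direct change-of-variables computation, mirroring exactly the strategy used for the determinant of $H$ in Proposition~\ref{prop38}, but now under the additional hypothesis $\phi(z_0)=0$. First I would observe that the coordinate change $(\theta_1'',r_2'',\theta_2'')=(\theta_1'+\ell_1 r_2'+\ell_2\theta_2',r_2',\theta_2')$ is precisely the linear substitution that kills the off-diagonal entries $\hat R_{\theta_1'r_2'}$ and $\hat R_{\theta_1'\theta_2'}$ in the first row/column of $H$: indeed $\hat R_{\theta_1''r_2''}(z_0)=\hat R_{\theta_1'r_2'}(z_0)-\ell_1\hat R_{\theta_1'\theta_1'}(z_0)=0$ and similarly $\hat R_{\theta_1''\theta_2''}(z_0)=0$ by the definitions of $\ell_1,\ell_2$, while $\hat R_{\theta_1''\theta_1''}(z_0)=\hat R_{\theta_1'\theta_1'}(z_0)$ is unchanged. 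So the $(1,1)$ entry of $H''(z_0)$ is immediate, and from Lemma~\ref{lemma35} together with the values $(k_2^2A-2k_2B+C)(z_0)=\frac{1}{k_1^2}4(p-1)|\mu|^3|u_0|\sin\Theta_1\cos\Theta_2$ computed in the proof of Proposition~\ref{prop38}, this entry equals $\frac{1}{k_1^2}4(p-1)|\mu|^3|u_0|\sin\Theta_1\cos\Theta_2$, as claimed.

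Next I would compute the remaining $2\times2$ block $\begin{pmatrix}\hat R_{r_2''r_2''}&\hat R_{r_2''\theta_2''}\\ \hat R_{r_2''\theta_2''}&\hat R_{\theta_2''\theta_2''}\end{pmatrix}(z_0)$. Since the block is obtained by Gaussian elimination of the first row/column, its determinant is $\det H''(z_0)/\hat R_{\theta_1''\theta_1''}(z_0)=\det H(z_0)/\hat R_{\theta_1'\theta_1'}(z_0)$, and by Proposition~\ref{prop38} the hypothesis $\phi(z_0)=0$ forces $\det H(z_0)=0$; hence this $2\times2$ block is singular. To pin down its actual entries I would use the elimination formulas $\hat R_{r_2''r_2''}(z_0)=\hat R_{r_2'r_2'}(z_0)-\ell_1^2\hat R_{\theta_1'\theta_1'}(z_0)$, etc., and substitute the values of $A,\dots,F$ from Lemma~\ref{lemma37}. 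Invoking $\phi(z_0)=0$, i.e. $(-1)^\kappa(p-1)|v_0|\sin\Theta_3=-(q-1)|\mu||u_0|\sin\Theta_1$, together with $\cos\Theta_2=(-1)^\kappa\cos\Theta_4$ from Lemma~\ref{lemma33} and the first equation of Lemma~\ref{lemma31}, one should find that after simplification the $\theta_2''$-row and column collapse: $\hat R_{r_2''\theta_2''}(z_0)=\hat R_{\theta_2''\theta_2''}(z_0)=0$, while $\hat R_{r_2''r_2''}(z_0)=-4(p-1)^2|\mu|^2/(k_1^2\hat R_{\theta_1'\theta_1'}(z_0))$. A quick consistency check is that the product of the two surviving diagonal entries is $\hat R_{\theta_1'\theta_1'}(z_0)\cdot\bigl(-4(p-1)^2|\mu|^2/(k_1^2\hat R_{\theta_1'\theta_1'}(z_0))\bigr)=-4(p-1)^2|\mu|^2/k_1^2$, which matches $(AC-B^2)(z_0)\cdot k_1^2\cos^2\Theta_2$ — a minor — from the proof of Proposition~\ref{prop38}, confirming the normalization.

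The main obstacle I anticipate is the bookkeeping in identifying the entry $\hat R_{r_2''r_2''}(z_0)$ and verifying that the two entries involving $\theta_2''$ genuinely vanish once $\phi(z_0)=0$ is imposed. This requires carefully expanding $\hat R_{r_2'r_2'}$, $\hat R_{r_2'\theta_2'}$, $\hat R_{\theta_2'\theta_2'}$, $\hat R_{\theta_1'r_2'}$, $\hat R_{\theta_1'\theta_2'}$ via Lemma~\ref{lemma35} in terms of $A,B,C,D,E,F$, then substituting Lemma~\ref{lemma37} and reducing trigonometric expressions using the relations among the $\Theta_i$ in Lemma~\ref{lemma33}; the vanishing of the $\theta_2''$ entries is where the hypothesis $\phi(z_0)=0$ is essential and must be used in exactly the right place. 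The $(1,1)$ entry, by contrast, is essentially free from the previous proof. I would organize the computation so that the $\phi(z_0)=0$ substitution is the final step, keeping every intermediate expression as a product of the factors $|\mu|$, $|u_0|$, $|v_0|$, $\cos\Theta_2$, $\sin\Theta_1$, $\sin\Theta_3$, $\cos\Theta_1$, $\cos\Theta_3$, so that the cancellation is transparent rather than buried in algebra.
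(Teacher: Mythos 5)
Your plan coincides with the paper's proof: the same linear change $(\theta_1'',r_2'',\theta_2'')$ diagonalizes the first row and column, the $(1,1)$ entry is read off from the value of $k_2^2A-2k_2B+C$ computed in Proposition~\ref{prop38}, and the remaining entries are obtained from the elimination formulas by substituting Lemmas~\ref{lemma32}, \ref{lemma35}, \ref{lemma37} and imposing $\phi(z_0)=0$ (the paper deduces $\hat R_{\theta_2''\theta_2''}(z_0)=0$ from $\det H''(z_0)=0$ and $\hat R_{r_2''r_2''}(z_0)\neq 0$ rather than computing it directly, but that is an inessential variant). One small caveat: your side ``consistency check'' is off --- the product of the two nonzero diagonal entries is $-4(p-1)^2|\mu|^2/k_1^2$, which equals the minor $k_3^2(AC-B^2)+D(k_2^2A-2k_2B+C)$ at $z_0$, not $(AC-B^2)(z_0)\cdot k_1^2\cos^2\Theta_2=-(p-1)^2|\mu|^2$; this does not affect the argument itself.
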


\begin{proof}
Using $\theta_1'=\theta_1''-\ell_1 r_2''-\ell_2\theta_2''$,
we can easily check that 
$\hat R_{\theta_1''\theta_1''}=\hat R_{\theta_1'\theta_1'}$ and 
$\hat R_{\theta_1''r_2''}=\hat R_{\theta_1''\theta_2''}=0$ at $z_0$.
By Lemma~\ref{lemma35}, $\hat R_{\theta_1'\theta_1'}=k_2^2A-2k_2B+C$,
and its value at $z_0$ had been calculated in the proof of Proposition~\ref{prop38}.

By the change of coordinates, 
$\hat R_{r_2''r_2''}=\ell_1^2\hat R_{\theta_1'\theta_1'}-2\ell_1\hat R_{\theta_1'r_2'}+\hat R_{r_2'r_2'}$.
Then the value at $z_0$ is calculated, by using Lemma~\ref{lemma35}, as
\[
\begin{split}
\hat R_{r_2''r_2''}(z_0)
&=\frac{-\hat R_{\theta_1'r_2'}(z_0)^2+\hat R_{r_2'r_2'}(z_0)\hat R_{\theta_1'\theta_1'}(z_0)}{\hat R_{\theta_1'\theta_1'}(z_0)}\\
&=\frac{k_3^2(A(z_0)C(z_0)-B(z_0)^2)+D(z_0)(k_2^2A(z_0)-2k_2B(z_0)+C(z_0))}{\hat R_{\theta_1'\theta_1'}(z_0)}.
\end{split}
\]
By substituting the values in Lemma~\ref{lemma32} and Lemma~\ref{lemma37} and
using the first equation in Lemma~\ref{lemma31} and $\phi(z_0)=0$, we can verify that the numerator 
is $-\frac{4}{k_1^2}(p-1)^2|\mu|^2$.

By the change of coordinates, $\hat R_{r_2''\theta_2''}=\ell_1\ell_2\hat R_{\theta_1'\theta_1'}
-\ell_1\hat R_{\theta_1'\theta_2'}
-\ell_2\hat R_{\theta_1'r_2'}+\hat R_{r_2'\theta_2'}$.
Then the value at $z_0$ is calculated, by using Lemma~\ref{lemma35}, as
\[
\begin{split}
\hat R_{r_2''\theta_2''}(z_0)
=&\frac{1}{\hat R_{\theta_1'\theta_1'}(z_0)}
\left(-\hat R_{\theta_1'\theta_2'}(z_0)\hat R_{\theta_1'r_2'}(z_0)
+\hat R_{\theta_1'\theta_1'}(z_0)\hat R_{r_2'\theta_2'}\right)(z_0)\\
=&\frac{1}{\hat R_{\theta_1'\theta_1'}(z_0)}\{-k_3k_4(k_2A(z_0)-B(z_0))^2\\
&+(k_3k_4A(z_0)+E(z_0))(k_2^2A(z_0)-2k_2B(z_0)+C(z_0))\}.
\end{split}
\]
We can verify that the numerator vanishes by applying lemmas and $\phi(z_0)=0$ as in the previous case.
Thus $\hat R_{r_2''\theta_2''}(z_0)=0$.
Finally we have $\hat R_{\theta_2''\theta_2''}(z_0)=0$ 
since $\hat R_{r_2''r_2''}\ne 0$ and $\det H''(z_0)=0$.
\end{proof}

Thus the new coordinates $(r_1'', \theta_1'', r_2'', \theta_2'')$ satisfies the setting of 
Lemma~\ref{lemma28}. Recall that in these coordinates
\[
\xi_4= -\frac{\partial Q}{\partial \theta_2^{''}}\frac{\partial }{\partial r_1^{''}}+
\frac{\partial Q}{\partial r_1^{''}}\frac{\partial }{\partial \theta_2^{''}},
\]
where the functions and derivatives are taken on the subset $S_1(P)\cap U$ of $S_1(P)$.

\begin{prop}\label{prop311}
Let $z_0$ be a singular point of $P$ satisfying
$k_1\ne 0$, $\phi(z_0)=0$ and $\hat R_{\theta_1'\theta_1'}(z_0)\ne 0$.
Then 
\[
\frac{\partial}{\partial \theta_2^{''}}(\xi_4(\xi_4(\hat R)))(z_0)
= \frac{1}{\sin^2\Theta_1\cos^2\Theta_1\cos^2\Theta_2}\psi(z_0),
\]
where $\psi(z_0)$ is a polynomial with variables $\sin\Theta_i$ and $\cos\Theta_i$, $i=1,2,3$, 
which is not constant zero and does not depend on the value $|\mu|$.
\end{prop}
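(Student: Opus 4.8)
The plan is to compute the quantity $\frac{\partial}{\partial\theta_2''}(\xi_4(\xi_4(\hat R)))(z_0)$ explicitly by following the change of coordinates backwards, exploiting the fact that at $z_0$ the normalization in Lemma~\ref{lemma28} holds. First I would note that by Lemma~\ref{lemma310}, in the coordinates $(r_1'',\theta_1'',r_2'',\theta_2'')$ we have $\grad Q(z_0)=(1,0,0,0)$ (inherited from Lemma~\ref{lemma34}, since the second change of coordinates fixes $r_1'$ and only mixes the last three coordinates among themselves) and the Hessian $H''(z_0)$ has the required diagonal form with zero in the $\theta_2''\theta_2''$-entry, so $\xi_4$ is well-defined and $T_{z_0}$ is spanned by $\partial/\partial\theta_2''$. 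The vector field $\xi_4=-Q_{\theta_2''}\,\partial_{r_1''}+Q_{r_1''}\,\partial_{\theta_2''}$ is tangent to the fibers of $Q$, so $\xi_4(\hat R)=\xi_4(R)$, and the third-order Levine invariant is $\frac{\partial}{\partial\theta_2''}(\xi_4(\xi_4(\hat R)))(z_0)$.

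The key computational step is to express $\xi_4(\xi_4(\hat R))$ in terms of the original polar derivatives of $R$ (equivalently of $Q$) and then evaluate at $z_0$ using the closed-form expressions already obtained: $k_1,k_2,k_3,k_4$ from Lemma~\ref{lemma32}, $s=\tan\Theta_2$ from Lemma~\ref{lemma36}, the second derivatives $A,\dots,F$ from Lemma~\ref{lemma37}, the relations among the $\Theta_i$ from Lemma~\ref{lemma33}, together with the constraints $p|u_0|^{p-1}=q|v_0|^{q-1}=1$ and $\phi(z_0)=0$. Since $\xi_4$ involves only first derivatives of $Q$, applying $\xi_4$ twice to $\hat R$ and then one more $\partial/\partial\theta_2''$ produces a combination of partial derivatives of $Q$ and $R$ up to order three, all of which are trigonometric polynomials in $p\theta_1+\arg\mu$, $-\theta_1+\arg\mu$, $q\theta_2$, $-\theta_2$ with coefficients that are monomials in $r_1,r_2$. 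After substituting the singular-point relations, every power of $r_1$ and $r_2$ is eliminated (each $r_1^{p-j}$ and $r_2^{q-j}$ collapses via $p r_1^{p-1}=1$, $q r_2^{q-1}=1$), and all angles collapse to the $\Theta_i$'s; the factor $k_1^2=4|\mu|^2\cos^2\Theta_1\cos^2\Theta_2$ and the $\sin\Theta_1$ in $\hat R_{\theta_1'\theta_1'}(z_0)$ (which appears in the denominators $\ell_1,\ell_2$ through $H''$) account for the prefactor $\frac{1}{\sin^2\Theta_1\cos^2\Theta_1\cos^2\Theta_2}$. What remains is a polynomial $\psi(z_0)$ in $\sin\Theta_i,\cos\Theta_i$ for $i=1,2,3$; the $|\mu|$-dependence cancels because $R$ and $Q$ are each homogeneous of degree $1$ in $|\mu|$ on the $u$-part and the overall normalization divides by $k_1^2\propto|\mu|^2$ while $\xi_4(\xi_4(\cdot))$ carries only one further factor of $|\mu|$ from the $v$-independent pieces — more carefully, one checks that after substitution all $|\mu|$ factors match and cancel.

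The remaining assertion is that $\psi(z_0)$ is \emph{not} identically zero as a trigonometric polynomial. I would prove this by exhibiting a single choice of $\arg u_0,\arg v_0,\arg\mu$ (compatible with Lemma~\ref{lemma31} and with $\phi(z_0)=0$, $\hat R_{\theta_1'\theta_1'}(z_0)\neq 0$) at which $\psi(z_0)\neq 0$, or more robustly by identifying a leading term in $\psi$ — say the coefficient of a specific monomial $\sin^a\Theta_1\cos^b\Theta_2\sin^c\Theta_3$ — and showing it is a nonzero polynomial in $p$ and $q$ (for all $p,q\geq 2$). Using Lemma~\ref{lemma33} one can reduce $\Theta_3$ and $\arg u_0,\arg v_0$ to functions of $\Theta_1,\Theta_2$, so $\psi$ becomes a genuine trigonometric polynomial in two angles whose nonvanishing can be read off from one monomial.

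The main obstacle is precisely the bookkeeping in the middle step: carrying $\xi_4(\xi_4(\hat R))$ — which is a third-order differential expression — through two successive changes of coordinates (the $k$-linear one and the $(\ell_1,\ell_2)$-shear), re-expressing everything in the original polar derivatives, and then substituting the seven or eight identities of Lemmas~\ref{lemma31}--\ref{lemma37} without error. The shear introduces terms where $\partial/\partial\theta_2''$ acts on $\ell_1,\ell_2$, which are themselves ratios of second derivatives of $\hat R$ restricted to $S_1(P)$, so one must also differentiate along $S_1(P)$; keeping track of which derivatives are ``free'' and which are constrained by the defining equations of $S_1(P)$ is where sign and factor errors are most likely. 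The cancellation of $|\mu|$ and the emergence of the clean prefactor are the sanity checks that the computation has been done correctly; the nonvanishing of $\psi$ is then comparatively routine.
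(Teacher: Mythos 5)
Your overall strategy---compute the Levine invariant explicitly from the closed-form data of Lemmas~\ref{lemma32}--\ref{lemma37}, read off the prefactor, argue $|\mu|$-independence, and prove $\psi\not\equiv 0$ by evaluating at a point---is the same as the paper's, and your non-vanishing step (exhibiting a single admissible configuration of the angles where $\psi\ne 0$) is exactly what the paper does. But the heart of the proposition is the computation itself, and there you have left a genuine gap: you describe the full ambient third-order bookkeeping as ``the main obstacle'' without resolving it, whereas the paper's proof turns on two specific reductions that you do not identify. First, one checks that $k_2(k_2A-B)-(k_2^2A-2k_2B+C)$ vanishes at $z_0$, which gives $k_2\ell_1-k_3=k_2\ell_2-k_4=0$; this makes the composite coordinate change transparent and, combined with $\phi(z_0)=0$, yields the constant value $\ell_2=-\frac{q-1}{p-1}$. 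Second---and this is the decisive step---all derivatives in $d^3P_{z_0}$ are taken along the singular set $S_1(P)$, which by Lemma~\ref{lemma31} is a curve on which $r_1$ is constant and $\theta_1$ is an affine function of $\theta_2$; hence $\frac{\partial}{\partial r_1''}=0$ and $\frac{\partial}{\partial \theta_2''}=\frac{2(q-1)}{p-1}\frac{\partial}{\partial\theta_1}$ there, and the entire expression collapses to
\[
\frac{32|\mu|^2}{k_1^2}\Bigl(\frac{q-1}{p-1}\Bigr)^3\,
\frac{\partial}{\partial\theta_1}\Bigl(c\,\frac{\partial}{\partial\theta_1}\Bigl(c\,\frac{\partial R}{\partial\theta_1}\Bigr)\Bigr),
\qquad
c(\theta_1;\arg\mu)=\cos\Bigl(\tfrac{p+1}{2}\theta_1\Bigr)\cos\Bigl(\tfrac{p-1}{2}\theta_1+\arg\mu\Bigr),
\]
a one-variable computation. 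Without this reduction your plan is a statement of intent rather than a proof.

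Two smaller inaccuracies. The terms you fear ``where $\partial/\partial\theta_2''$ acts on $\ell_1,\ell_2$'' do not exist: $\ell_1,\ell_2$ are the \emph{constants} $\hat R_{\theta_1'r_2'}(z_0)/\hat R_{\theta_1'\theta_1'}(z_0)$ and $\hat R_{\theta_1'\theta_2'}(z_0)/\hat R_{\theta_1'\theta_1'}(z_0)$, so the second coordinate change is linear. And the $\sin^2\Theta_1$ in the denominator of the prefactor does not come from $\hat R_{\theta_1'\theta_1'}(z_0)$ appearing in $\ell_1,\ell_2$; it is produced when $|\mu|$ is eliminated at the end using $\phi(z_0)=0$, i.e.\ $|\mu|=-(-1)^\kappa(p-1)|v_0|\sin\Theta_3/\bigl((q-1)|u_0|\sin\Theta_1\bigr)$, while $\cos^2\Theta_1\cos^2\Theta_2$ comes from $k_1^2$. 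Getting these mechanisms right is what makes the claimed form of the right-hand side, and the $|\mu|$-independence of $\psi$, actually verifiable.
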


\begin{proof}
It is easy to verify that the value of $k_2(k_2A-B)-(k_2^2A-2k_2B+C)$ at 
$z_0$ is $0$. Since this appears as a factor of the numerators of
\[
k_2\ell_1-k_3=\frac{k_2\hat R_{\theta_1'r_2'}(z_0)-k_3\hat R_{\theta_1'\theta_1'}(z_0)}{\hat R_{\theta_1'\theta_1'}(z_0)}
\]
and
\[
k_2\ell_2-k_4=\frac{k_2\hat R_{\theta_1'\theta_2'}(z_0)
-k_4\hat R_{\theta_1'\theta_1'}(z_0)}{\hat R_{\theta_1'\theta_1'}(z_0)},
\]
we have $k_2\ell_1-k_3=k_2\ell_2-k_4=0$.
Because of this property, 
the change of coordinates from $(r_1,\theta_1,r_2,\theta_2)$
to $(r_1'',\theta_1'',r_2'',\theta_2'')$ is given as
\[
(r_1,\theta_1,r_2,\theta_2)=\left(\frac{1}{k_1}r_1''-\frac{k_2}{k_1}\theta_1'',\;\;
\theta_1''-\ell_1r_2''-\ell_2\theta_2'',\;\; r_2'',\;\; \theta_2''\right).
\]
We can verify that
\[
\ell_2=\frac{(-1)^\kappa |v_0|\sin\Theta_3}{|\mu||u_0|\sin\Theta_1},
\]
and if furthermore $\phi(z_0)=0$ then $\ell_2= -\frac{q-1}{p-1}$.

By checking the change of coordinates, we see that
\[
\frac{\partial }{\partial \theta_2^{''}}= -\ell_2\frac{\partial }{\partial \theta_1}+\frac{\partial }{\partial \theta_2}
\quad\text{and}\quad
\frac{\partial }{\partial r_1^{''}}= \frac{1}{k_1}\frac{\partial }{\partial r_1}.
\]
By Lemma \ref{lemma31}, $r_1$ is a constant on $S_1(P)\cap U$ and 
\[
   \theta_1= \frac{q-1}{p-1}\theta_2+ \frac{2}{p-1}(\kappa\pi- \arg\mu).
\]
Hence we have
\[
   \frac{\partial }{\partial \theta_2^{''}}=\frac{2(q-1)}{p-1}\frac{\partial }{\partial \theta_1}
   \quad\text{and}\quad
   \frac{\partial }{\partial r_1^{''}}=0.
\]


Now we calculate the equation in the assertion.
We can replace $\hat R$ in the equation by $R$ since
\[
\begin{split}
\xi_4(\hat R)
&=-Q_{\theta_2''}\hat R_{r_1''}+Q_{r_1''}\hat R_{\theta_2''}
=-Q_{\theta_2''}(R_{r_1''}-sQ_{r_1''})+Q_{r_1''}(R_{\theta_2''}-sQ_{\theta_2''}) \\
&=-Q_{\theta_2''}R_{r_1''}+Q_{r_1''}R_{\theta_2''}
= \xi_4(R).
\end{split}
\]
We can easily check that 
\[
   \xi_4=\frac{\partial Q}{\partial r_1^{''}}\frac{\partial }{\partial \theta_2^{''}}
=\frac{4|\mu|(q-1)}{k_1(p-1)}c(\theta_1;\arg\mu)\frac{\partial }{\partial \theta_1},
\]
where
\[
c(\theta_1;\arg\mu)=\cos\left(\frac{p+1}{2}\theta_1\right)\cos\left(\frac{p-1}{2}\theta_1+\arg\mu\right).
\]
Thus we have 
\[
   \frac{\partial}{\partial \theta_2^{''}}(\xi_4(\xi_4(\hat R)))
=\frac{32|\mu|^2}{k_1^2}\left(\frac{q-1}{p-1}\right)^3
   \frac{\partial }{\partial \theta_1}\left(c(\theta_1;\arg\mu)\frac{\partial }{\partial \theta_1}\left(
c(\theta_1;\arg\mu)\frac{\partial R}{\partial \theta_1}\right)\right).
\]
We calculate the right-hand side and then take the value at $z_0=(u_0,v_0)$.
In particular, by this substitution, we have $c(\arg u_0;\arg\mu)=\cos\Theta_1\cos\Theta_2$. 
After these calculation and substitution, we eliminate $|\mu|$ by using $\phi(z_0)=0$. 
In this process, $\sin^2\Theta_1$ appears in the denominator. 
Since $k_1^2$ is also in the denominator, $\cos^2\Theta_1\cos^2\Theta_2$ also appears.
No other $\sin\Theta_i$'s and $\cos\Theta_i$'s appear in the denominator.
Thus we have the equation claimed in the assertion.

By direct calculation, we can see that $\phi(z_0)$ is not zero at the point defined by either
(i) $\sin\Theta_2=\sin\Theta_4=0$ and $\Theta_1=\Theta_3=\frac{\pi}{4}$ or
(ii) $\sin\Theta_2=\sin\Theta_4=0$ and $\Theta_1=-\Theta_3=\frac{\pi}{4}$.
Hence $\phi(z_0)$ is not constant zero.
\end{proof}

\begin{prop}\label{prop312}
For a generic choice of $\mu$, every singular point $z_0$ of $P$ with $k_1\ne 0$
and $\hat R_{\theta_1'\theta_1'}(z_0)\ne 0$ is either a fold or a cusp.
\end{prop}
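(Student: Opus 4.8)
The plan is to combine Proposition~\ref{prop39} with Lemma~\ref{lemma28} and Proposition~\ref{prop311}, once the genericity of $\mu$ has been used to rule out the degenerate cases. First I would recall what remains to be checked. By Proposition~\ref{prop39}, every singular point $z_0$ with $k_1\ne 0$ and $\phi(z_0)\ne 0$ is already a fold, so the only singular points we still have to analyze are those with $k_1\ne 0$, $\phi(z_0)=0$, and $\hat R_{\theta_1'\theta_1'}(z_0)\ne 0$. For such a point, Lemma~\ref{lemma310} shows that the Hessian $H''(z_0)$ has rank exactly $2$, so $z_0\in S_{1,1}(P)$ and the new coordinates $(r_1'',\theta_1'',r_2'',\theta_2'')$ satisfy the hypotheses of Lemma~\ref{lemma28}. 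By that lemma, $z_0$ is a cusp precisely when $\frac{\partial}{\partial\theta_2''}(\xi_4(\xi_4(\hat R)))(z_0)\ne 0$, and Proposition~\ref{prop311} expresses this quantity as $\frac{1}{\sin^2\Theta_1\cos^2\Theta_1\cos^2\Theta_2}\psi(z_0)$, where $\psi$ is a nonzero polynomial in $\sin\Theta_i,\cos\Theta_i$ that does not involve $|\mu|$.

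Next I would make the genericity argument precise. The relevant angles $\Theta_1,\Theta_2,\Theta_3$ at a singular point are governed, via Lemma~\ref{lemma31} and Lemma~\ref{lemma33}, by $\arg u_0$, $\arg v_0$, and $\arg\mu$, with the moduli fixed by $p|u_0|^{p-1}=q|v_0|^{q-1}=1$. Eliminating $\arg v_0$ using the relation in Lemma~\ref{lemma33}, the locus of singular points is cut out inside the torus of angular variables by finitely many trigonometric equations whose coefficients depend on $\arg\mu$; combining the conditions $\phi(z_0)=0$, $\hat R_{\theta_1'\theta_1'}(z_0)\ne 0$ and $\psi(z_0)=0$ gives a system of trigonometric equations in $\arg u_0$ with parameter $\arg\mu$. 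I would argue that, for all but finitely many values of $\arg\mu$ (equivalently, $a,b$ lying outside finitely many zero-sets of equations in the polar coordinates of $a,b$, in the sense of Remark~\ref{generic_ab}), this system has no solution: since $\psi$ is not identically zero and is independent of $|\mu|$, imposing $\psi=0$ together with the defining equations of the singular set is one extra equation, hence generically cuts the (already zero-dimensional, for generic $\mu$) singular set down to the empty set. Thus for generic $\mu$ no singular point with $k_1\ne 0$ and $\hat R_{\theta_1'\theta_1'}(z_0)\ne 0$ satisfies $\phi(z_0)=0$ and $\psi(z_0)=0$ simultaneously.

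With this in hand the proof of Proposition~\ref{prop312} is immediate: take $\mu$ generic in the above sense. Let $z_0$ be any singular point of $P$ with $k_1\ne 0$ and $\hat R_{\theta_1'\theta_1'}(z_0)\ne 0$. If $\phi(z_0)\ne 0$ then $z_0$ is a fold by Proposition~\ref{prop39}. If $\phi(z_0)=0$, then $z_0\in S_{1,1}(P)$ by Lemma~\ref{lemma310}, and by genericity $\psi(z_0)\ne 0$, so $\frac{\partial}{\partial\theta_2''}(\xi_4(\xi_4(\hat R)))(z_0)\ne 0$ by Proposition~\ref{prop311}; hence $z_0$ is a cusp by Lemma~\ref{lemma28}. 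In either case $z_0$ is a fold or a cusp, which is the assertion.

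I expect the main obstacle to be the bookkeeping in the genericity step: one must verify that the finitely many trigonometric equations arising from $\phi(z_0)=0$ and $\psi(z_0)=0$ genuinely determine only finitely many bad values of $\arg\mu$, rather than being satisfied identically in $\arg\mu$ on some component of the singular locus. This is where the explicit fact from Proposition~\ref{prop311} that $\psi$ is a nonzero polynomial independent of $|\mu|$ is essential, and where one has to be careful that the elimination of $\arg v_0$ and $|u_0|,|v_0|$ does not secretly reintroduce $|\mu|$-dependence or collapse the count. Once it is granted that $\psi$ provides one genuinely new equation, the conclusion follows from the standard fact that imposing one more independent algebraic (here, trigonometric) condition on a finite set of points, as the parameter $\arg\mu$ varies, excludes only finitely many parameter values.
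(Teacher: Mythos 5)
Your overall architecture matches the paper's: dispose of the $\phi(z_0)\ne 0$ case by Proposition~\ref{prop39}, and in the $\phi(z_0)=0$ case combine Lemma~\ref{lemma310}, Proposition~\ref{prop311} and Lemma~\ref{lemma28}, using genericity of $\mu$ to guarantee $\psi(z_0)\ne 0$. However, there are two genuine gaps. First, you misquote the cusp criterion: Lemma~\ref{lemma28} says $z_0$ is a cusp if and only if $\rk M=3$, $\rk H=2$ \emph{and} the third-order quantity is nonzero, where $M$ is the full $4\times 3$ matrix $\bigl(\hat R_{x_ix_j}\bigr)_{i=1,\dots,4,\;j=2,3,4}$ (equivalently, $d^2P_{z_0}$ is surjective). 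Knowing $\rk H''(z_0)=2$ from Lemma~\ref{lemma310} does not give $\rk M=3$, since the last row and column of $H''(z_0)$ vanish; one must produce a nonzero entry in the extra row, i.e., show $\hat R_{r_1''\theta_2''}(z_0)\ne 0$. The paper does this by an explicit computation reducing $\hat R_{r_1''\theta_2''}(z_0)$ to a multiple of $k_4=-2|v_0|\sin\Theta_3\cos\Theta_4$ and then invoking genericity once more ($\sin\Theta_3=0$ together with $\phi(z_0)=0$ would force $\sin\Theta_1=0$, which is excluded for generic $\mu$). Without this step the application of Lemma~\ref{lemma28} in the cusp case is unjustified.

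Second, the genericity claim --- that $\phi(z_0)=0$ and $\psi(z_0)=0$ have no common solution for generic $\mu$ --- is the technical heart of the proof, and you leave it at a dimension-count heuristic which you yourself flag as the main obstacle. The danger is exactly the one you name: a priori $\psi$ could vanish identically on the locus $\{\phi=0\}$ inside the singular set for a non-negligible set of $\mu$. The paper resolves this by parametrizing the singular circle by $\theta$, rewriting $\phi$ and $\psi$ as polynomials in $t=\cos\theta$ via Chebyshev polynomials, exploiting the special structure $\Phi(t;\mu)=|\mu|^2\Phi_1+|\mu|\cos m_\kappa\,\Phi_2+\cos^2 m_\kappa\,\Phi_3+\Phi_4$ with the $\Phi_i$ independent of $\mu$, and proving that the resultant $S(|\mu|,\arg\mu)$ of $\Phi$ and $\Psi$ with respect to $t$ is not identically zero (if it were, infinitely many values of $|\mu|$ would force a common factor of $\Phi_1,\Phi_2$ to divide $\cos^2 m_\kappa\,\Phi_3+\Phi_4$, a contradiction for a suitable choice of $\arg\mu$). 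Note also that the resulting bad set is the zero locus $\{S(|\mu|,\arg\mu)=0\}$ in the $\mu$-plane, not merely finitely many values of $\arg\mu$ as your sketch suggests. Supplying this resultant argument and the $\rk M=3$ computation would complete your proof along the paper's lines.
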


\begin{proof}
We first prove that $\phi(z_0)=0$ and $\psi(z_0)=0$ have no common solution for a generic choice of $\mu$. 
We will use Chebyshev polynomials to represent these two functions. 
For $n\in \N$, let $T_n(t)$ and $U_n(t)$ be the Chebyshev polynomials defined by
\[
   \cos(n\theta)= T_n(\cos \theta)
   \quad\text{and}\quad 
   \sin((n+1)\theta)= U_n(\cos\theta)\sin\theta.
\]
By the definitions of $\Theta_i$'s and Lemma~\ref{lemma33}, 
\[
\Theta_1=\frac{p+1}{2}\arg u_0
\quad\text{and}\quad
\Theta_3= \frac{q+1}{2}\arg v_0= \frac{q+1}{2}\left( \frac{p-1}{q-1}\arg u_0+ \frac{2(\arg \mu-k\pi)}{q-1}\right).
\]
Since $|u_0|$ and $|v_0|$ are non-zero constant along the singular set $S(P)$,
by setting $\theta=\frac{\arg u_0}{2(q-1)}$ we parametrize $S(P)$ by $\theta$.
Substituting this to the above $\Theta_1$ and $\Theta_3$ we have
\[
\Theta_1=(p+1)(q-1)\theta
\quad\text{and}\quad
\Theta_3=(p-1)(q+1)\theta+\frac{(q+1)(\arg \mu-\kappa\pi)}{q-1}.
\]
We set $n_1,n_3\in\Z$ and $m_\kappa\in\R$ such that $\Theta_1=n_1\theta$ and $\Theta_3=n_3\theta+m_{\kappa}$,
where $m_{\kappa}$ depends on $\arg\mu$.
Set $t=\cos\theta$. 
Then $\phi(z_0)$ is written as
\[
\begin{split}
\phi(z_0)=&(-1)^\kappa(p-1)|v_0|\left(\sin\theta \cos m_\kappa \;U_{n_3-1}(t)+\sin m_\kappa\;T_{n_3}(t) \right) \\
&+(q-1)|u_0||\mu|\sin \theta\;U_{n_1-1}(t).
\end{split}
\]
Similarly, $\psi(z_0)$ is written as
\[
   \psi(z_0)= \sin\theta \;P_1(t;\arg\mu)+ P_2(t;\arg\mu),
\]
where $P_i$ are some polynomials on $t$ whose coefficients depend on $\arg \mu$. 
Remark that $\phi(z_0)$ and $\psi(z_0)$ are not constant zero.
From the equations $\phi(z_0)=0$ of $\theta$, we obtain that
\[
   \Phi(t;\mu)=|\mu|^2\Phi_1(t)+ |\mu|\cos m_\kappa\;\Phi_2(t)+ \cos^2 m_\kappa\;\Phi_3(t)+ \Phi_4(t)=0
\]
and from $\psi(z_0)=0$ we get $\Psi(t;\arg\mu)=0,$
where $\Phi_i$ are non-zero polynomials not depending on $\mu$ and $\Psi$ is a non-zero polynomial on $t$ 
whose coefficients depend on $\arg\mu$. If $\phi(z_0)=0$ and $\psi(z_0)=0$ have a common solution then $\Phi(t;\mu)=0$ 
and $\Psi(t;\arg\mu)=0$ also.

Assume that the four polynomials $\Phi_i(t)$ have common factors
which provide solutions of $\Phi(t;\mu)=0$.
These solutions do not depend $\mu$ since $\Phi_i(t)$ do not depend.
However, since $\sin\Theta_1\ne 0$, any solution of $\phi(z_0)=0$ depends on $|\mu|$.
This means that the above solutions do not satisfy $\phi(z_0)=0$.

Dividing $\Phi(t;\mu)=0$ by these factors,
we may assume that  $\Phi_i(t)$ do not have a common factor. 
Let $S(|\mu|, \arg\mu)$ be the resultant of $\Phi(t;\mu)$ and $\Psi(t;\arg\mu)$ with respect to $t$. 
We will show that $S(|\mu|, \arg\mu)\not\equiv 0$. Assume $S(|\mu|, \arg\mu)\equiv 0$, 
that means the polynomials $\Phi(t;\mu)$ and $\Psi(t;\arg\mu)$ have common factor for any $(|\mu|,\arg\mu)$. 
Choose $\arg\mu$ such that $\cos^2 m_\kappa\;\Phi_3+ \Phi_4$ is not dividable by any non-constant common 
factor of $\Phi_1, \Phi_2$. 
Since $\Phi(t;\mu)$ is dividable by some non-constant factor of $\Psi(t;\arg\mu)$ for all $|\mu|$, 
there are infinitely many $|\mu|$ such that   
$\Phi(t;\mu)$ share same factor $\Delta(t)$. This implies that $\Delta(t)$ is a common factor of $\Phi_1, \Phi_2$ 
and hence of $\cos ^2m_\kappa\; \Phi_3+ \Phi_4$.  
This is a contradiction. Thus $\phi(z_0)=0$ and $\psi(z_0)=0$ have no common solution for generic $\mu$.

Let $z_0$ be a singular point of $P$ with $k_1\ne 0$ and $\hat R_{\theta_1'\theta_1'}(z_0)\ne 0$.
Let $\mu$ be a generic value in $\C$ chosen in the above paragraph.
Furthermore, we may assume that $\sin \Theta_1=0$ and $\sin \Theta_3=0$ 
have no common solution. By direct calculation we have
\[
\hat R_{r_1''\theta_2''}
=(k_2\ell_2-k_4)A-\ell_2B+\frac{1}{k_1}\hat R_{r_1\theta2}.
\]
As we mentioned in the proof of Proposition~\ref{prop311},
$k_2\ell_2-k_4=0$. The term $\hat R_{r_1\theta2}$ also vanishes.
Note that $k_1=2|\mu|\cos\Theta_1\cos\Theta_2\ne 0$ and 
$\hat R_{\theta_1'\theta_1'}\ne 0$ by assumption.
Therefore, $\hat R_{r_1''\theta_2''}=-\ell_2B$.
By substituting the results in Lemma~\ref{lemma32}, Lemma~\ref{lemma35} and Lemma~\ref{lemma37}, we have
\[
\begin{split}
\hat R_{r_1''\theta_2''}(z_0)
&=-\frac{\hat R_{\theta_1'\theta_2'}(z_0)}{\hat R_{\theta_1'\theta_1'}(z_0)}
\frac{(p-1)|\mu|\cos\Theta_1}{k_1\cos\Theta_2} 
=-\frac{k_4(k_2A(z_0)-B(z_0))}{\hat R_{\theta_1'\theta_1'}(z_0)}
\frac{(p-1)|\mu|\cos\Theta_1}{k_1\cos\Theta_2} \\
&=\frac{k_4}{\hat R_{\theta_1'\theta_1'}(z_0)}\frac{2(p-1)^2|\mu|^3\cos\Theta_1}{k_1^3\cos\Theta_2}.
\end{split}
\]
Recall that $k_4=-2|v_0|\sin\Theta_3\cos\Theta_4$.
If $\sin\Theta_3=0$ then $\phi(z_0)=0$ implies $\sin\Theta_1=0$. This cannot happen because we
had chosen $\mu$ such that they have no common solution.
Thus  $\hat R_{r_1''\theta_2''}\ne 0$.
With this non-vanishing and the entries of the matrix $H''(z_0)$
in Lemma~\ref{lemma310}, we can conclude that
the rank of the right-top $3\times 3$ minor of the Hessian of
$\hat R$ with variables $(r_1'',\theta_1'',r_2'',\theta_2'')$ is $3$.
In particular, the rank of the matrix $M$ in Lemma~\ref{lemma28} is $3$.

Now we apply Lemma~\ref{lemma28}.
If $\phi(z_0)\ne 0$ then $z_0$ satisfies $\det H(z_0)\ne 0$ by Proposition~\ref{prop38}. 
Hence it is a fold by Lemma~\ref{lemma28}. 
If $\phi(z_0)=0$ then $\psi(z_0)\ne 0$. This is equivalent to 
$\frac{\partial}{\partial \theta_2^{''}}(\xi_4(\xi_4(\hat R)))(z_0)
\ne 0$ under the assumption $k_1\ne 0$ and $\phi(z_0)=0$.
By Lemma~\ref{lemma310}, $\rk H''(z_0)=2$.
Moreover, as mentioned above, the rank of the matrix $M$ in Lemma~\ref{lemma28} is $3$.
Hence $z_0$ is a cusp by Lemma~\ref{lemma28}.
This completes the proof.
\end{proof}

\subsection{Non-existence of cusps with $\hat R_{\theta_1'\theta_1'}(z_0)=0$ for generic $\mu$}

\begin{lemma}\label{lemma313}
For a generic choice of $\mu$,
any singular point $z_0$ of $P$ with $k_1\ne 0$ and $\phi(z_0)=\hat R_{\theta_1'\theta_1'}(z_0)=0$ is a fold.
\end{lemma}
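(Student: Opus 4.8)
The plan is to show that the remaining exceptional locus, namely singular points where simultaneously $k_1\ne 0$, $\phi(z_0)=0$ and $\hat R_{\theta_1'\theta_1'}(z_0)=0$, is empty for a generic $\mu$, so that there is nothing to check; or, if it is non-empty, that at such points $\rk H(z_0)=3$ and hence $z_0$ is a fold by Lemma~\ref{lemma28}. The key observation is that by the formula in Lemma~\ref{lemma310} (more precisely, the expression computed in the proof of Proposition~\ref{prop38}), $\hat R_{\theta_1'\theta_1'}(z_0)=\frac{1}{k_1^2}4(p-1)|\mu|^3|u_0|\sin\Theta_1\cos\Theta_2$, so that under the standing assumption $k_1=2|\mu|\cos\Theta_1\cos\Theta_2\ne 0$ the condition $\hat R_{\theta_1'\theta_1'}(z_0)=0$ is equivalent to $\sin\Theta_1=0$. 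Thus this subsection is exactly the case $\sin\Theta_1=0$, which was explicitly excluded in the genericity hypothesis of Proposition~\ref{prop312} (where one assumed $\sin\Theta_1=0$ and $\sin\Theta_3=0$ have no common solution), and now must be handled directly.

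First I would substitute $\sin\Theta_1=0$ into the equation $\phi(z_0)=0$ from Proposition~\ref{prop38}, namely $(-1)^\kappa(p-1)|v_0|\sin\Theta_3+(q-1)|\mu||u_0|\sin\Theta_1=0$. With $\sin\Theta_1=0$ this forces $\sin\Theta_3=0$ as well (since $p-1\ne 0$ and $|v_0|\ne 0$). So a singular point in the locus under consideration satisfies $\sin\Theta_1=\sin\Theta_3=0$ simultaneously. Next, following the Chebyshev-polynomial parametrization set up in the proof of Proposition~\ref{prop312} — writing $\Theta_1=n_1\theta$ and $\Theta_3=n_3\theta+m_\kappa$ with $t=\cos\theta$ — the two conditions $\sin\Theta_1=0$ and $\sin\Theta_3=0$ become two polynomial equations in $t$ (and $\arg\mu$, through $m_\kappa$): $\sin\theta\,U_{n_1-1}(t)=0$ and $\sin\theta\cos m_\kappa\,U_{n_3-1}(t)+\sin m_\kappa\,T_{n_3}(t)=0$. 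I would then argue that, since the coefficients of the second equation depend non-trivially on $\arg\mu$ while those of the first do not, a resultant argument (identical in spirit to the one already used for $\Phi$ and $\Psi$ in Proposition~\ref{prop312}) shows that for all but finitely many values of $\arg\mu$ these two polynomials share no common root. Hence, for generic $\mu$, there is no singular point $z_0$ with $k_1\ne 0$, $\phi(z_0)=0$ and $\hat R_{\theta_1'\theta_1'}(z_0)=0$ — the locus is empty and the lemma holds vacuously.

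The main obstacle I anticipate is bookkeeping with the degenerate cases in the resultant argument: one must rule out that $U_{n_1-1}(t)$ divides the $\arg\mu$-dependent polynomial for \emph{all} $\arg\mu$, and separately that $\sin\theta=0$ (i.e.\ $t=\pm 1$) does not accidentally solve both equations for a positive-measure set of $\mu$ — but $\sin\theta=0$ would force $\arg u_0\equiv 0\pmod{2\pi/(p+1)}$ and correspondingly constrain $\arg v_0$, which again only happens for special $\arg\mu$. A secondary subtlety is that one should confirm $\cos\Theta_2\ne 0$ throughout (guaranteed by $k_1\ne 0$), so that the various denominators appearing in Lemmas~\ref{lemma35}--\ref{lemma37} are harmless. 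As a safety net, should one prefer not to invoke emptiness, at a point with $\sin\Theta_1=\sin\Theta_3=0$ one can instead compute $\det H(z_0)$ directly from Proposition~\ref{prop38}: there $\phi(z_0)=0$, so $\det H(z_0)=0$ and $\rk H(z_0)\le 2$, which would make $z_0$ a candidate cusp rather than a fold — confirming that the emptiness route is the correct one and that the genericity hypothesis on $\mu$ is genuinely needed here.
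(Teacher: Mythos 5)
Your proposal is correct and follows essentially the same route as the paper: under $k_1\ne 0$ the vanishing of $\hat R_{\theta_1'\theta_1'}(z_0)$ forces $\sin\Theta_1=0$, then $\phi(z_0)=0$ forces $\sin\Theta_3=0$, and these two conditions are incompatible for generic $\arg\mu$, so the locus in question is empty. The only cosmetic difference is that the paper dispenses with the final genericity step in one line by invoking the last relation in Lemma~\ref{lemma33} (which pins $\arg\mu$ to a finite set once $\Theta_1,\Theta_3\in\pi\Z$), rather than running a Chebyshev--resultant argument.
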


\begin{proof}
Since $\hat R_{\theta_1'\theta_1'}(z_0)=0$ and $k_1\ne 0$,
we have $\sin\Theta_1=0$. Then $\phi(z_0)=0$ implies $\sin\Theta_3=0$. 
Hence the last equation in Lemma~\ref{lemma33} has no solution if we choose $\arg\mu$ generic,
that is, any singular point $z_0$ satisfies $\phi(z_0)\ne 0$ for generic $\mu$.
By Proposition~\ref{prop38}, $\det H(z_0)\ne 0$. Hence $z_0$ is a fold by Lemma~\ref{lemma28}.
\end{proof}

\subsection{Non-existence of cusps with $k_1=0$ for generic $\mu$}

\begin{lemma}\label{lemma314}
For a generic choice of $\mu$,
any singular point $z_0$ of $P$ with $k_1=0$
is a fold.
\end{lemma}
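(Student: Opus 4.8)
The plan is to handle the remaining case $k_1=0$ by first extracting the constraints this vanishing imposes, and then running through Levine's criterion (Lemma~\ref{lemma28}) once more with a coordinate system adapted to the fact that $\grad Q(z_0)$ now has its nonzero entry in a different slot. Since $k_1=Q_{r_1}(z_0)=2|\mu|\cos\Theta_1\cos\Theta_2$ and $|\mu|\neq0$, the hypothesis forces $\cos\Theta_1=0$ or $\cos\Theta_2=0$. By Lemma~\ref{lemma33} we have $\cos\Theta_2=(-1)^\kappa\cos\Theta_4$, so $\cos\Theta_2=0$ forces $\cos\Theta_4=0$ as well, and then $k_3=2\cos\Theta_3\cos\Theta_4=0$ too; by Lemma~\ref{lemma32} this would make the whole gradient $(k_1,k_2,k_3,k_4)$ collapse except possibly for $k_2,k_4$, and one checks $\cos\Theta_2=\cos\Theta_4=0$ together with the singular-point relations is incompatible with $z_0$ being a genuine singular point (or, more simply, forces $\grad Q(z_0)=0$, contradicting $z_0\in S_1(P)$ where $P=Q+iR$ — at least one of $\grad Q,\grad R$ must be nonzero and an analogous computation handles $\grad R$). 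So the surviving subcase is $\cos\Theta_1=0$, i.e. $\sin\Theta_1=\pm1$, and then $k_1=k_2=0$ while $k_3,k_4$ are governed by $\Theta_3,\Theta_4$.

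Next I would set up coordinates paralleling Section~3.1 but with the roles adjusted: when $k_1=k_2=0$, the gradient $\grad Q(z_0)=(0,0,k_3,k_4)$ is supported on the $(r_2,\theta_2)$ block, so (after a linear change in $(r_2,\theta_2)$, using whichever of $k_3,k_4$ is nonzero as pivot, exactly as in Lemma~\ref{lemma34}) one normalizes $\grad Q(z_0)=(0,0,1,0)$ and the three transverse directions are spanned by $\partial_{r_1},\partial_{\theta_1},\partial_{\theta_2'}$. Then set $\hat R=R-sQ$ with $s$ chosen to kill $\hat R$'s gradient component in the pivot direction, and compute the relevant $3\times3$ Hessian $H$ of $\hat R$ in these variables at $z_0$. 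The point is to show $\det H(z_0)\neq0$ (generically in $\mu$), so that $\rk H=3$ and $z_0$ is a fold by Lemma~\ref{lemma28}. Because $\sin\Theta_1=\pm1$, the $u$-block of $\hat R$'s second derivatives — essentially the analogue of $A,B,C$ from Lemma~\ref{lemma37}, now with $|u_0|$-terms and $\cos\Theta_1=0$ — degenerates in a controlled way, and the $u$-part of the Hessian becomes a nondegenerate rank-$2$ piece (coming from $R_{r_1 r_1}$, $R_{r_1\theta_1}$, $R_{\theta_1\theta_1}$ with $\sin\Theta_1=\pm1$) while the remaining $\theta_2'$-direction contributes the third nonzero eigenvalue through the mixed term with $R_{r_1\theta_2}$ or $R_{\theta_1\theta_2}$. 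A short computation, analogous to but lighter than Proposition~\ref{prop38}, should give $\det H(z_0)$ as a nonzero multiple of an explicit trigonometric expression $\tilde\phi(z_0)$ in $\Theta_1,\Theta_3,\Theta_4$.

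Finally I would argue that $\tilde\phi(z_0)\neq0$ for generic $\mu$. Since $\sin\Theta_1=\pm1$ is now an equation cutting down $S(P)$, combined with the last relation in Lemma~\ref{lemma33} (which relates $\Theta_1$, $\Theta_3$ and $\arg\mu$ linearly) the singular points with $k_1=0$ form a finite set whose location depends on $\arg\mu$; one then observes that $\tilde\phi$ vanishes only on a proper subvariety, and choosing $\arg\mu$ outside the finitely many ``bad'' values (the ones for which a solution of $\sin\Theta_1=\pm1$ and the Lemma~\ref{lemma33} relation also solves $\tilde\phi=0$) gives the claim — this is exactly the same kind of genericity argument used in Lemma~\ref{lemma313} and in the resultant argument of Proposition~\ref{prop312}, only simpler because there is no free $|\mu|$-dependence to track (the constraint $\sin\Theta_1=\pm1$ already pins $\Theta_1$). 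I expect the main obstacle to be purely bookkeeping: choosing the right pivot among $k_3,k_4$ (these can independently vanish, so there may be two sub-subcases, e.g. $\cos\Theta_3=0$ or $\sin\Theta_3=0$, each needing its own normalization) and then pushing the Hessian determinant computation through cleanly enough to exhibit the nonvanishing factor $\tilde\phi$ explicitly. Once $\det H(z_0)\neq0$ is in hand, Lemma~\ref{lemma28} closes the case immediately, and together with Propositions~\ref{prop39}, \ref{prop312} and Lemmas~\ref{lemma313} this exhausts all singular points of $P$, completing the proof that $P$ has only indefinite fold and cusp singularities for generic $\mu$.
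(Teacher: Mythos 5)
Your reduction of $k_1=0$ to the two subcases $\cos\Theta_1=0$ or $\cos\Theta_2=0$ is right, but both subcases are then handled incorrectly. The serious problem is the subcase $\cos\Theta_2=0$: you dismiss it by asserting that $\grad Q(z_0)=0$ contradicts $z_0\in S_1(P)$, but it does not. A point of $S_1(P)$ only requires the $2\times 4$ Jacobian of $(Q,R)$ to drop rank to $1$, which is perfectly compatible with $\grad Q(z_0)=0$ provided $\grad R(z_0)\ne 0$. Moreover this subcase genuinely occurs: along each singular circle $\Theta_2=\frac{p-1}{2}\arg u_0+\arg\mu$ sweeps through values with $\cos\Theta_2=0$ (and then all four $k_i$ vanish, not just $k_1,k_3$ as you state, since $k_2,k_4$ carry factors $\cos\Theta_2,\cos\Theta_4$). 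The paper devotes roughly half of its proof to exactly this situation, swapping the roles of $Q$ and $R$: one first shows $\cos\Theta_1\ne 0$ for generic $\arg\mu$ via $\Theta_1=\frac{p+1}{p-1}\left(\ell\pi+\frac{\pi}{2}-\arg\mu\right)$, so that $\hat k_1=R_{r_1}(z_0)=2|\mu|\cos\Theta_1\sin\Theta_2\ne 0$, then normalizes $\grad R(z_0)=(1,0,0,0)$ and computes the Hessian of $Q$, whose determinant is again a nonzero multiple of $\phi(z_0)$; a separate genericity argument (with $\cos\Theta_2=0$ the value $\Theta_3$ is pinned to finitely many possibilities while the $|\mu|\,|u_0|\sin\Theta_1$ term moves with $\mu$) rules out $\phi(z_0)=0$. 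None of this is supplied by your parenthetical ``an analogous computation handles $\grad R$,'' and your conclusion that the only surviving subcase is $\cos\Theta_1=0$ is false.

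The other subcase also contains an error. When $\cos\Theta_1=0$ and $\cos\Theta_2\ne 0$ you claim $k_1=k_2=0$ and propose to pivot on $k_3$ or $k_4$; but $k_2=-2|\mu||u_0|\sin\Theta_1\cos\Theta_2$ and $\sin\Theta_1=\pm 1$ here, so in fact $k_2\ne 0$. The correct (and simpler) normalization is $\grad Q(z_0)=(0,1,0,0)$ with $\theta_1$ as the pivot, as the paper does; the Hessian of $\hat R$ in $(r_1',r_2',\theta_2')$ then becomes block diagonal with $\hat R_{r_1'r_1'}(z_0)\ne 0$ and a $2\times 2$ block whose determinant is a nonzero multiple of $\phi(z_0)$, so the fold condition reduces to $\phi(z_0)\ne 0$, achieved for generic $|\mu|$. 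Your overall strategy (normalize, compute the Hessian, show $\det H(z_0)\ne 0$ generically, invoke Lemma~\ref{lemma28}) is the right one, but as written the case analysis discards a non-empty case on the basis of a false claim and misidentifies the pivot in the remaining one.
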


\begin{proof}
Since $k_1=2|\mu|\cos\Theta_1\cos\Theta_2$, 
either $\cos\Theta_1$ or $\cos\Theta_2$ is zero.

First we assume that $\cos\Theta_2\ne 0$.
In this case, $k_2\ne 0$ because $\cos\Theta_1=0$ and $\sin\Theta_1=\pm 1$.
Let $(r_1',\theta_1', r_2', \theta_2')$ be new coordinates in a small neighborhood of $z_0$ defined by
\[
(r_1',\theta_1', r_2', \theta_2')=
(r_1, k_1r_1+k_2\theta_1+k_3r_2+k_4\theta_2, r_2, \theta_2).
\]
Then we have
$(Q_{r_1'}(z_0), Q_{\theta_1'}(z_0), Q_{r_2'}(z_0), Q_{\theta_2'}(z_0))=(0,1,0,0)$.
Set $\hat R=R-s Q$, where $s=R_{\theta_1'}(z_0)$, so that 
$\hat R_{\theta_1'}=R_{\theta_1'}-s Q_{\theta_1'}$ vanishes at $z_0$.
The constant $s$ is calculated as
\[
\begin{split}
s&=
\frac{1}{k_2}Q_{\theta_1}(z_0)=\frac{1}{k_2}\{p(|\mu||u_0|^p
\cos(p\arg u_0+\arg\mu)-|\mu||u_0|\cos(-\arg u_0+\arg\mu)\} \\
&=\frac{1}{k_2}|u_0|\{\cos(p\arg u_0+\arg\mu)-\cos(-\arg u_0+\arg\mu)\}\\
&=-\frac{2}{k_2}|u_0|\sin\left(\frac{p-1}{2}\arg u_0+\arg\mu\right)\sin\left(\frac{p+1}{2}\arg u_0\right) \\
&=\frac{\sin\Theta_2}{\cos\Theta_2}.
\end{split}
\]
This coincides with the $s$ in the case $k_1\ne 0$ obtained in Lemma~\ref{lemma36}.
Therefore, $\hat R$ above is exactly same as in the case $k_1\ne 0$.

Since $k_1=0$, by straightforward calculation, we have
\[
\hat R_{r_1'r_2'}=-\frac{k_3}{k_2}\hat R_{r_1\theta_1}\quad\text{and}\quad
\hat R_{r_1'\theta_2'}=-\frac{k_4}{k_2}\hat R_{r_1\theta_1}.
\]
We can check that $\hat R_{r_1\theta_1}(z_0)=(p-1)|\mu|\frac{\cos\Theta_1}{\cos\Theta_2}=0$.
Therefore both $\hat R_{r_1'r_2'}$ and $\hat R_{r_1'\theta_2'}$ vanish at $z_0$
and the Hessian of $\hat R$ with variables $(r_1', r_2', \theta_2')$ at $z_0$ is of the form
\[
H(z_0)=
\begin{pmatrix}
\hat R_{r_1'r_1'}(z_0) & 0 & 0 \\
0 & \hat R_{r_2'r_2'}(z_0) & \hat R_{r_2'\theta_2'}(z_0) \\
0 & \hat R_{r_2'\theta_2'}(z_0) & \hat R_{\theta_2'\theta_2'}(z_0) 
\end{pmatrix}.
\]
By straightforward calculation we have 
\[
\hat R_{r_1'r_1'}(z_0)=\hat R_{r_1r_1}(z_0)=p(p-1)|\mu||u_0|^{p-2}\frac{\sin\Theta_1}{\cos\Theta_2}\ne 0.
\]
We also have
\[
\begin{split}
\hat R_{r_2'r_2'}(z_0)&=\hat R_{r_2r_2}+\frac{k_3^2}{k_2^2}\hat R_{\theta_1\theta_1}
=\frac{1}{k_2^2|v_0|}4|\mu||u_0|\sin\Theta_1\cos\Theta_2\phi'(z_0), \\
\hat R_{r_2'\theta_2'}(z_0)
&=\hat R_{r_2\theta_2}+\frac{k_3k_4}{k_2^2}\hat R_{\theta_1\theta_1}
=\frac{(-1)^\kappa}{k_2^2}4|\mu||u_0|\sin\Theta_1\cos\Theta_2\cos\Theta_3\phi(z_0), \\
\hat R_{\theta_2'\theta_2'}(z_0)
&=\hat R_{\theta_2\theta_2}+\frac{k_4^2}{k_2^2}\hat R_{\theta_1\theta_1}
=\frac{(-1)^{\kappa+1}}{k_2^2}4|\mu||u_0||v_0|\sin\Theta_1\sin\Theta_3\cos\Theta_2\phi(z_0), 
\end{split}
\]
where 
\[
\phi'(z_0)=(-1)^\kappa(q-1)|\mu||u_0|\sin\Theta_1\sin\Theta_3-(p-1)|v_0|\cos^2\Theta_3
\]
and $\phi(z_0)$ is the equation in Proposition~\ref{prop38}.
Then the determinant of the Hessian $H$ is calculated as
\[
\det(H(z_0))=16(q-1)|\mu|^3|u_0|^3\sin\Theta_1\cos\Theta_2\phi(z_0)\hat R_{r_1'r_1'}(z_0).
\]
Therefore $\det(H(z_0))=0$ if and only if $\phi(z_0)=0$.
By the last equation in Lemma~\ref{lemma33} 
and the assumption $\cos\Theta_1=0$, we see that if we fix a value of $\arg\mu$ then
there are only finitely many possible values for $\Theta_3$.
The values $|u_0|$ and $|v_0|$ are determined by $p$ and $q$ as seen in Lemma~\ref{lemma31}.
Therefore if we choose $|\mu|$ generic then $\phi(z_0)=0$ has no solution.
Hence any singular point is a fold for generic $\mu$ by Lemma~\ref{lemma28}.

Next we assume that $\cos\Theta_2=0$.
By Lemma~\ref{lemma33}, we also have $\cos\Theta_4=0$.
Therefore the gradient $(k_1,k_2,k_3,k_4)$ of the real part $Q$ at the singular point $z_0$ of $P$ vanishes.
In this case, we consider the gradient of the imaginary part $R$ at $z_0$;
\[
  (\hat k_1,\hat k_2,\hat k_3,\hat k_4)=(R_{r_1}(z_0),R_{\theta_1}(z_0),R_{r_2}(z_0),R_{\theta_2}(z_0)).
\]
Each element is given as
\[
\begin{array}{lll}
\hat k_1=2|\mu|\cos\Theta_1\sin\Theta_2, & & \hat k_2=-2|\mu||u_0|\sin\Theta_1\sin\Theta_2, \\
\hat k_3=2\cos\Theta_3\sin\Theta_4, & & \hat k_4=-2|v_0|\sin\Theta_3\sin\Theta_4.
\end{array}
\]

Since $\cos\Theta_2=0$, we set $\Theta_2=\ell\pi+\frac{\pi}{2}$ with $\ell\in\Z$.
Substituting this to the defining equation of $\Theta_1$,
we obtain
\begin{equation}\label{eq314}
   \Theta_1=\frac{p+1}{p-1}\left(\ell\pi+\frac{\pi}{2}-\arg\mu\right).
\end{equation}
Therefore, by choosing $\arg\mu$ generic, we may assume that $\cos\Theta_1\ne 0$.

The strategy of the calculation below is exactly same as what we did in Section 3.1.
First we apply the change of coordinates
\[
(r_1',\theta_1', r_2', \theta_2')=
(\hat k_1r_1+\hat k_2\theta_1+\hat k_3r_2+\hat k_4\theta_2, \theta_1, r_2, \theta_2)
\]
so that $(R_{r'_1}(z_0),R_{\theta'_1}(z_0),R_{r'_2}(z_0),R_{\theta'_2}(z_0))=(1,0,0,0)$.
Then we have $Q_{r_1'}(z_0)=\frac{1}{\hat k_1}Q_{r_1}(z_0)=\frac{k_1}{\hat k_1}=0$.
This means that we do not need to apply the change of coordinates of the target space $\R^2$
which we did by setting $\hat R=R-sQ$ between Lemma~\ref{lemma34} and Lemma~\ref{lemma35}.
Then the Hessian $\hat H$ of $Q$ with variables $(\theta_1', r_2', \theta_2')$ is
\[
\hat H=
\begin{pmatrix}
\hat k_2^2\hat A-2\hat k_2\hat B+\hat C & \hat k_3(\hat k_2\hat A-\hat B) & \hat k_4(\hat k_2\hat A-\hat B) \\
\hat k_3(\hat k_2\hat A-\hat B) & \hat k_3^2\hat A+\hat D & \hat k_3\hat k_4\hat A+\hat E \\
\hat k_4(\hat k_2\hat A-\hat B) & \hat k_3\hat k_4\hat A+\hat E & \hat k_4^2\hat A+\hat F
\end{pmatrix},
\]
where
\[
\begin{split}
\hat A&=\frac{1}{\hat k_1^2}Q_{r_1r_1},\quad 
\hat B=\frac{1}{\hat k_1}Q_{r_1\theta_1},\quad 
\hat C=Q_{\theta_1\theta_1} \\
\hat D&=Q_{r_2r_2},\quad 
\hat E=Q_{r_2\theta_2}, \quad 
\hat F=Q_{\theta_2\theta_2},
\end{split}
\]
and its determinant is 
\[
\det H=(\hat k_4^2\hat D-2\hat k_3\hat k_4\hat E+\hat k_3^2\hat F)(\hat A\hat C-\hat B^2)
+(\hat k_2^2\hat A-2\hat k_2\hat B+\hat C)(\hat D\hat F-\hat E^2).
\]
By direct calculation, we obtain
\[
\det \hat H(z_0)=\frac{1}{\hat k_1^2}4(p-1)(q-1)|\mu|^2\sin\Theta_2 \phi(z_0),
\]
where $\phi(z_0)$ is the equation in Proposition~\ref{prop38}.
Since $\cos\Theta_2=0$, it is enough to prove that $\phi(z_0)=0$ has 
no solution for generic $\mu$.
Note that $|u_0|$ and $|v_0|$ are fixed by Lemma~\ref{lemma31}.
Set $\Theta_2=\ell\pi+\frac{\pi}{2}$ with $\ell\in\Z$ as before.
By the first equation in Lemma~\ref{lemma33}, $\Theta_4=(\ell-\kappa)\pi+\frac{\pi}{2}$.
Substituting them to the defining equations of $\Theta_i$'s,
we obtain equation~\eqref{eq314} and
\[
   \Theta_3=\frac{q+1}{q-1}\left((\ell-\kappa)\pi+\frac{\pi}{2}\right).
\]
The number of the possible values of the first term of $\phi(z_0)$ is finite,
while the second term can vary according to $\mu$. Therefore $\phi(z_0)=0$ has no solution for generic $\arg\mu$.
Hence any singular point is a fold for generic $\mu$ by Lemma~\ref{lemma28}.
\end{proof}

\begin{rem}\label{generic_ab}
In summary, for $P(u,v;\mu)$ to be an excellent map, $\mu$ is chosen in the above proofs such that
\begin{itemize}
\item $S(|\mu|,\arg\mu)\ne 0$\;\,(required in the proof of Proposition~\ref{prop312}),
\item $\arg \mu$ is not in a finite set in $S^1$\;\,(required in the case $k_1\ne 0$ and $\hat R_{\theta_1'\theta_1'}(z_0)=0$), and
\item $|\mu|$ is not in a finite set in $\R$\;\,(required in the case $k_1=0$).
\end{itemize}
These conditions can be written by using polar coordinates of $a$ and $b$, according to the change of variables 
explained in the beginning of Section~3.
\end{rem}

\section{Proofs of Theorem~\ref{thm2} and the second assertion in Theorem~\ref{thm1}}

\subsection{Case $ab\ne 0$}

First we study the case $ab\ne 0$. As in the previous section,
we set $P(u,v;\mu)=\mu(u^p+\bar u)+v^q+\bar v$ with $p,q\geq 2$ and $\mu\in\C\setminus\{0\}$.
Then, by Lemma~\ref{lemma31}, 
the set of singular points of $f$ consists of $r$ parallel curves $C_k$, $k=0,\cdots, r-1$,
on the torus $\{(u,v)\in\C^2\mid |u|=A, |v|=B\}$,
each of which is parametrized, with parameter $e^{i\theta}\in S^1$, as
\begin{equation}\label{eq41}
(u,v)=\left(Ae^{\left(\frac{q-1}{r}\theta+c_k\right)i}, Be^{\frac{p-1}{r}\theta i}\right),
\end{equation}
where $r=\gcd(p-1,q-1)$, $A=1/p^{1/(p-1)}$, $B=1/q^{1/(q-1)}$ and $c_k=\frac{1}{p-1}(-2\arg\mu+2\pi k)$.

Assume that $P(u,v;\mu)$ is an excellent map and set the map $P_k:C_k\to\C$ as
\[
\begin{split}
P_k(\theta)&=P\left(Ae^{\left(\frac{q-1}{r}\theta+c_k\right)i}, Be^{\frac{p-1}{r}\theta i}\right)\\
&=\mu\left(A^pe^{\left(\frac{p(q-1)}{r}\theta+pc_k\right)i}+Ae^{-\left(\frac{q-1}{r}\theta+c_k\right)i}\right)
+B^qe^{\frac{q(p-1)}{r}\theta i}+Be^{-\frac{p-1}{r}\theta i}.
\end{split}
\]
Since $P$ is an excellent map, the set of cusps of $P$ on $C_k$ corresponds
to the zero points of $dP_k/d\theta=0$. The left hand side is calculated as
\[
\frac{dP_k}{d\theta}=-2e^{\frac{(p-1)(q-1)}{2r}\theta i}\Phi(\theta)
\]
with
\[
\Phi(\theta)=(-1)^k|\mu|\frac{q-1}{r}A\sin\left(\frac{(p+1)(q-1)}{2r}\theta+\frac{p+1}{2}c_k\right)
+\frac{p-1}{r}B\sin\left(\frac{(p-1)(q+1)}{2r}\theta\right),
\]
where we used the formula
\begin{equation}\label{formula41}
e^{ix}-e^{iy}=2ie^{\frac{x+y}{2}i}\sin\frac{x-y}{2}
\end{equation}
and relations $pA^p=qB^q=1$.

Now we set 
\[
T(\theta;|\mu|)=(-1)^k|\mu|\sin(n\theta+c_k')+C\sin(m\theta),
\]
where $m=\frac{(p-1)(q+1)}{2r}$, $n=\frac{(p+1)(q-1)}{2r}$, $c_k'=\frac{p+1}{2}c_k$ and
$C=\frac{(p-1)B}{r}\frac{r}{(q-1)A}>0$,
and observe how the number of roots of $T(\theta;|\mu|)=0$ changes according to the change of $|\mu|$.
Note that $m>n$ if and only if $p>q$.
Since $T(\theta;|\mu|)$ is a smooth function for variables $(\theta,|\mu|)$,
the number of roots changes only at the points satisfying
\[
\begin{split}
T&=(-1)^k|\mu|\sin(n\theta+c_k')+C\sin(m\theta)=0 \quad\text{and}\\
\frac{dT}{d\theta}&=(-1)^k|\mu|n\cos(n\theta+c_k')+Cm\cos(m\theta)=0.
\end{split}
\]

\begin{lemma}\label{lemma41}
If $p>q$ then
the number of solutions of $T(\theta;|\mu|)=0$ is monotone decreasing according to the parameter $|\mu|\in (0,\infty)$.
\end{lemma}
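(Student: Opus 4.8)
The plan is to follow the zero set of $T$ inside the cylinder $S^1\times(0,\infty)$ with coordinates $(\theta,|\mu|)$ and to show that every point of this set at which $T$ and $T_\theta$ vanish simultaneously is a strict local maximum of $|\mu|$ along the set; since $\theta$ runs over the compact circle, this forces the roots of $T(\cdot;|\mu|)=0$ to disappear in pairs as $|\mu|$ increases and never to be created, which is the assertion. Concretely, fix $\arg\mu$ and $k$, and separate the roots of $T(\cdot;|\mu|)=0$ into the common zeros of $\sin(n\theta+c_k')$ and $\sin(m\theta)$ — finitely many $\theta$'s that are roots for \emph{every} $|\mu|$ — and the remaining roots, which lie in $\{\sin(n\theta+c_k')\ne 0\}$ and there satisfy $|\mu|=g(\theta)$ with $g(\theta):=-(-1)^kC\,\sin(m\theta)/\sin(n\theta+c_k')$. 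The function $g$ does not depend on $|\mu|$; it extends smoothly across the common zeros and has simple poles (where $g\to\pm\infty$) exactly at the non-common zeros of $\sin(n\theta+c_k')$. Thus the number of roots of $T(\cdot;|\mu|)=0$ is a fixed constant plus the number of solutions of $g(\theta)=|\mu|$, and it suffices to show that the level count of $g$ is non-increasing for positive levels.

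The heart of the matter is the sign of $g''$ at its critical points. At a critical point $\theta_0$ that is not a common zero, put $|\mu|_0:=g(\theta_0)>0$, so that $T(\theta_0;|\mu|_0)=T_\theta(\theta_0;|\mu|_0)=0$ while $T_{|\mu|}(\theta_0;|\mu|_0)=(-1)^k\sin(n\theta_0+c_k')\ne 0$. Using $T=0$ to eliminate $\sin(m\theta_0)$ gives $T_{\theta\theta}(\theta_0)=(-1)^k|\mu|_0(m^2-n^2)\sin(n\theta_0+c_k')$, and differentiating $T(\theta;g(\theta))=0$ twice, with $g'(\theta_0)=-T_\theta/T_{|\mu|}=0$ and $T_{|\mu||\mu|}\equiv 0$, yields
\[
g''(\theta_0)=-\frac{T_{\theta\theta}(\theta_0)}{T_{|\mu|}(\theta_0)}=-|\mu|_0\,(m^2-n^2)<0,
\]
which is exactly where the hypothesis $p>q$, equivalently $m>n$, is used. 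At a common zero $\theta_0$ the local expansion $g(\theta_0+x)=g(\theta_0)\bigl(1+\tfrac{n^2-m^2}{6}\,x^2+\cdots\bigr)$ gives the same conclusion whenever $g(\theta_0)>0$. Hence every critical value of $g$ in $(0,\infty)$ is attained only at nondegenerate local maxima, so by an elementary Morse-type argument on $S^1$ minus the poles the level count of $g$ is piecewise constant and non-increasing on $(0,\infty)$; restoring the common zeros — and checking, via the expansion of $T$ around such a point, that the total number of roots of $T$ there drops from $3$ to $1$ as $|\mu|$ crosses $g(\theta_0)$ rather than increasing — completes the proof.

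I expect the main obstacle to be precisely this degenerate configuration where $\sin(n\theta+c_k')$ and $\sin(m\theta)$ share a zero: there $g$ is not literally the defining function of the zero set, and one must expand $T$ directly around the common zero to verify that no root is manufactured. Beyond that, one has to record the routine facts that, for $|\mu|>0$, $T(\cdot;|\mu|)$ is a nonzero trigonometric polynomial with only finitely many critical values and that its zero set cannot escape to $|\mu|=0$ or $|\mu|=\infty$ within a bounded $|\mu|$-window, so that the level-count argument applies uniformly for all $|\mu|\in(0,\infty)$ and not merely for generic $\mu$. The actual content, however, is entirely in the displayed sign computation.
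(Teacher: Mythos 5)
Your proof is correct and follows essentially the same route as the paper: both reduce the claim to showing that at every point where $T=T_\theta=0$ the zero set of $T$ carries a local maximum of $|\mu|$, via the identical key computation $T_{\theta\theta}(\theta_0)=(-1)^k|\mu|(m^2-n^2)\sin(n\theta_0+c_k')$ compared in sign with $T_{|\mu|}=(-1)^k\sin(n\theta_0+c_k')$, with the degenerate case $\sin(n\theta_0+c_k')=\sin(m\theta_0)=0$ split off and shown to turn a triple root into a simple one. The only real difference is packaging: you route everything through the implicit function $g(\theta)=-(-1)^kC\sin(m\theta)/\sin(n\theta+c_k')$ and its Taylor expansion at the common zeros, whereas the paper works directly with the hypersurfaces $U=0$, $U'=0$ and the third derivative $T_{\theta\theta\theta}$, but the content and the case decomposition coincide.
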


\begin{proof}
Suppose that $\theta_0$ and $|\mu_0|$ satisfy $T(\theta_0;|\mu_0|)=\frac{dT}{d\theta}(\theta_0;|\mu_0|)=0$.
Set $U(\theta,|\mu|,t)=T(\theta;|\mu|)-t$ and consider the hypersurface given by $U=0$.
Note that $(\theta_0,|\mu_0|,0)$ is a point on this surface.
Then the gradient of $U$ at $(\theta_0,|\mu_0|,0)$ is
\[
\text{grad} U(\theta_0,|\mu_0|,0)=(0,(-1)^k\sin(n\theta_0+c_k'),-1).
\]

First we study the case $(-1)^k\sin(n\theta_0+c_k')\ne 0$.
Assume that $(-1)^k\sin(n\theta_0+c_k')>0$.
Since $(-1)^k|\mu|\sin(n\theta_0+c_k')+C\sin(m\theta_0)=0$, we have
\[
\begin{split}
\frac{d^2 T}{d\theta^2}(\theta_0;|\mu_0|)
&=-(-1)^k|\mu|n^2\sin(n\theta_0+c_k')-Cm^2\sin(m\theta_0) \\
&=(-1)^k|\mu|(m^2-n^2)\sin(n\theta_0+c_k')>0.
\end{split}
\]
This means that the graph $\{U=0\}$ on the $(\theta,t)$-plane $\{|\mu|=0\}$ is downward convex.
Observing the position of the hypersurface $U=0$ near $(\theta_0,|\mu_0|,0)$ 
using the gradient, 
we can conclude that the number of the solutions decreases according to the parameter $|\mu|$.
The assertion in the case $(-1)^k\sin(n\theta_0+c_k')<0$ follows by the same argument.

Next we study the case $\sin(n\theta_0+c_k')=0$.
Since $\frac{dT}{d\theta}(\theta_0;|\mu_0|)=0$, we have
\[
\begin{split}
\frac{d^3 T}{d\theta^3}(\theta_0;|\mu_0|)
&=-(-1)^k|\mu|n^3\cos(n\theta_0+c_k')-Cm^3\cos(m\theta_0) \\
&=(-1)^k|\mu|(m^2-n^2)n\cos(n\theta_0+c_k')\ne 0.
\end{split}
\]
This means that $T(\theta;|\mu_0|)=0$ has a root at $\theta=\theta_0$ with multiplicity $3$.
Set $U'(\theta,|\mu|,t)=\frac{\partial T}{\partial \theta}(\theta,|\mu|)-t$ and consider the hypersurface $U'=0$
near the point $(\theta_0,|\mu_0|,0)$.
Then the gradient of $U'$ at $(\theta_0,|\mu_0|,0)$ is
\[
\text{grad} U'(\theta_0,|\mu_0|,0)=(0,(-1)^kn\cos(n\theta_0+c_k'),-1).
\]
If $(-1)^kn\cos(n\theta_0+c_k')>0$ then $\frac{d^3 T}{d\theta^3}(\theta_0;|\mu_0|)>0$.
By the same argument as before,
the number of the solutions decreases near $(\theta_0,|\mu_0|,T(\theta_0;|\mu_0|))$ according to the parameter $|\mu|$.
It also decreases even if $(-1)^kn\cos(n\theta_0+c_k')<0$ by the same argument.
These mean that the number of solutions of $T(\theta;|\mu|)=0$ decreases near $(\theta_0,|\mu_0|,T(\theta_0;|\mu_0|))$ 
according to $|\mu|$.
\end{proof}

\begin{lemma}\label{lemma42}
Suppose $p=q$.
Then the number of the solutions of $T(\theta;|\mu|)=0$ is $\frac{p^2-1}{r}$
except the finite number of points on $|\mu|=1$ satisfying $\sin c_k'=0$.
\end{lemma}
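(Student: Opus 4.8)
The plan is to use the hypothesis $p=q$ to collapse $T(\theta;|\mu|)$ into a single sinusoid of fixed frequency, whose amplitude and phase depend on $(|\mu|,\arg\mu)$. First I would substitute $p=q$ into the quantities defining $T$. Then $r=\gcd(p-1,q-1)=p-1$, so
\[
n=\frac{(p+1)(q-1)}{2r}=\frac{p+1}{2}=\frac{(p-1)(q+1)}{2r}=m,
\]
and since $A=p^{-1/(p-1)}=q^{-1/(q-1)}=B$ we also get $C=\frac{(p-1)B}{(q-1)A}=1$. Hence, writing $n=\frac{p+1}{2}$,
\[
T(\theta;|\mu|)=(-1)^k|\mu|\sin(n\theta+c_k')+\sin(n\theta).
\]

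Next I would put this in amplitude--phase form: expanding $\sin(n\theta+c_k')$ by the addition formula and collecting the coefficients of $\sin(n\theta)$ and $\cos(n\theta)$ gives $T(\theta;|\mu|)=\rho\,\sin(n\theta+\delta)$ with
\[
\rho^2=\bigl(1+(-1)^k|\mu|\cos c_k'\bigr)^2+|\mu|^2\sin^2 c_k'=1+2(-1)^k|\mu|\cos c_k'+|\mu|^2
\]
and $\delta=\delta(|\mu|,\arg\mu)$ a suitable phase. From the sum-of-squares form, $\rho=0$ forces $\sin c_k'=0$ (as $|\mu|>0$), hence $\cos c_k'=\pm1$, and then $1+(-1)^k|\mu|\cos c_k'=0$ forces $|\mu|=1$. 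Since $c_k'=\frac{p+1}{2}c_k=\frac{p+1}{p-1}(\pi k-\arg\mu)$ depends only on $\arg\mu$ and $k$, the condition $\sin c_k'=0$ holds for only finitely many $\arg\mu$ modulo $2\pi$; thus $\rho$ vanishes precisely on the finitely many points of $\{|\mu|=1\}$ with $\sin c_k'=0$ --- the exceptional locus in the statement --- and $\rho>0$ everywhere else.

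Finally, off that locus the equation $T(\theta;|\mu|)=0$ is equivalent to $\sin(n\theta+\delta)=0$, and I would count the roots on the circle $C_k$, parametrised by $\theta$ in the fundamental interval $[0,2\pi)$ through~\eqref{eq41}. As $\theta$ runs over $[0,2\pi)$ the argument $n\theta+\delta$ sweeps a half-open interval of length $2\pi n=(p+1)\pi$, which contains exactly $p+1$ integer multiples of $\pi$; hence $T(\theta;|\mu|)=0$ has precisely $p+1=\frac{(p-1)(p+1)}{p-1}=\frac{p^2-1}{r}$ solutions, as claimed. The only step requiring genuine care is the case of even $p$, where $n=\frac{p+1}{2}$ is a half-integer --- so $T$ is merely anti-periodic of period $2\pi$ in $\theta$ --- and one must check that the $p+1$ zeros of $\sin(n\theta+\delta)$ in $[0,2\pi)$ descend to $p+1$ distinct points of the embedded circle $C_k$; this holds because the exponents $\pm\theta,\pm p\theta$ occurring in $P_k$ are integers, so the parametrisation~\eqref{eq41} of $C_k$ genuinely has period $2\pi$. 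Summing $p+1$ over the $r=p-1$ curves $C_0,\dots,C_{r-1}$ then recovers the total $p^2-1$ that enters Theorem~\ref{thm2}.
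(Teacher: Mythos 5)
Your proof is correct, but it takes a different route from the paper's. The paper argues by deformation in the parameter $|\mu|$: it observes that the number of roots of $T(\theta;|\mu|)=0$ can only change at parameter values where $T$ and $dT/d\theta$ vanish simultaneously, eliminates $\theta$ from that pair of equations to get $(-1)^k|\mu|n\sin c_k'=0$ and hence $|\mu|=1$, notes that the complement of this exceptional set in the $\mu$-plane is connected so the count is constant there, and finally evaluates the count in the limit $|\mu|\to 0$, where $T$ degenerates to $\sin(n\theta)$. You instead exploit the coincidence of frequencies ($m=n$, $C=1$) to collapse $T$ into a single sinusoid $\rho\sin(n\theta+\delta)$ with $\rho^2=1+2(-1)^k|\mu|\cos c_k'+|\mu|^2$, check that $\rho$ can vanish only when $\sin c_k'=0$ and $|\mu|=1$, and then count the $2n=p+1$ zeros of $\sin(n\theta+\delta)$ on $[0,2\pi)$ directly. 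Your argument is more elementary and more informative (it locates the zeros and avoids both the connectedness step and the limiting argument), whereas the paper's deformation argument is the one that extends to $p\ne q$ (Lemma~\ref{lemma41}), which is presumably why it is phrased that way. Two small remarks: your amplitude $\rho$ actually vanishes only on the subset of the exceptional locus where moreover $(-1)^k\cos c_k'=-1$; this is harmless since the lemma only requires $\rho>0$ off that locus, and it is consistent with the paper's later case analysis ($k+\ell$ odd versus even). Your attention to the anti-periodicity of $T$ when $p$ is even is a point the paper glosses over, and your resolution (the zero set, unlike $T$ itself, is $2\pi$-periodic, and the parametrisation~\eqref{eq41} of $C_k$ has period $2\pi$) is right.
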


\begin{proof}
The condition $p=q$ implies that $m=n$ and $C=1$. Therefore the number of solutions changes at the point $|\mu|$ 
satisfying the following two equations:
\[
\begin{split}
T(\theta;|\mu|)&=(-1)^k|\mu|\sin(n\theta+c_k')+\sin(n\theta)=0 \\
\frac{dT}{d\theta}(\theta;|\mu|)&=(-1)^k|\mu|n\cos(n\theta+c_k')+n\cos(n\theta)=0.
\end{split}
\]
From these equations, we have
\[
\begin{split}
0&=(-1)^k|\mu|n\sin(n\theta+c_k')\cos(n\theta)-(-1)^k|\mu|n\cos(n\theta+c_k')\sin(n\theta)\\
&=(-1)^k|\mu|n\sin c_k'.
\end{split}
\]
By substituting this to $T(\theta;|\mu|)=0$ and $\frac{dT}{d\theta}(\theta;|\mu|)=0$,
we obtain $|\mu|=1$. This means that the number of solutions does not change except at the points with 
$|\mu|=1$ and $\sin c_k'=0$.
Since the complement of these points in the $\mu$-plane is connected,
the number of solutions is constant outside these points.
If $|\mu|$ is sufficiently small then the number of solutions is equal to the solution of $\sin(n\theta)=0$.
Since $n=\frac{(p+1)(q-1)}{2r}$, the number of solutions is $\frac{p^2-1}{r}$.
\end{proof}

\begin{proof}[Proof of Theorem~\ref{thm2} in case $ab\ne 0$]
Suppose that $ab\ne 0$. We use the notation in Section~3.

We first proof the assertion in the case $p>q$.
If $|\mu|$ is sufficiently small then the number of cusps of the excellent map $P$ on $C_k$ is equal to 
the solution of $\sin(m\theta)=0$, which is $\frac{(p-1)(q+1)}{r}$.
If $|\mu|$ is sufficiently large then it is equal to 
the solution of $\sin(n\theta+c_k')=0$, which is $\frac{(p+1)(q-1)}{r}$.
Then, by Lemma~\ref{lemma41}, the number of cusps on $C_k$ is between $\frac{(p-1)(q+1)}{r}$
and $\frac{(p+1)(q-1)}{r}$.
Since the singular set of $P$ has $r$ components, the assertion follows.

Next we consider the case $p=q$. In this case, we have $m=n$ and $C=1$.
Lemma~\ref{lemma42} proves the assertion unless $|\mu|=1$ and $\sin c_k'=0$.
Suppose that $|\mu|=1$ and $\sin c_k'=0$.
By the second condition we set $c_k'=\ell\pi$ with $\ell\in\Z$.
If $k+\ell$ is odd then $T(\theta;1)=0$ for any $e^{i\theta}\in S^1$. Therefore $P$ is not an excellent map.
If $k+\ell$ is even, then $T(\theta;1)=2\sin(n\theta)=2\sin\left(\frac{p^2-1}{r}\theta\right)$.
Hence if $P$ is an excellent map then the number of cusps of $P$ on $C_k$ is $\frac{p^2-1}{r}$ for any $k=0,1,\cdots,r-1$.
Since the set of singular points of $P$ has $r$ components, the total number of cusps of $P$ is $p^2-1$.
This completes the proof.
\end{proof}

\subsection{Case $ab=0$}

\begin{proof}[Proof of Theorem~\ref{thm2} in case $ab=0$]
Suppose that either $a$ or $b$ is zero.
Without loss of generality, we may assume that $a\ne 0$ and $b=0$.
In this case, by Lemma~\ref{lemma31}, the singular set of $f$ is parametrized as $(u,v)=(Ae^{i\theta},0)$.
By changing the coefficients of $f$ as before, we obtain the form $P=\mu(u^p+\bar u)+v^q$.
The real and imaginary part $f$ and $g$ of $P$ is given as
\[
\begin{split}
f&=|\mu||u|^p\cos(p\arg u+\arg\mu)+|\mu||u|\cos(-\arg u+\arg\mu)+\text{Re}(v^q), \\
g&=|\mu||u|^p\sin(p\arg u+\arg\mu)+|\mu||u|\sin(-\arg u+\arg\mu)+\text{Im}(v^q).
\end{split}
\]
Following the calculation in Section~3 in this setting, we obtain
\[
\begin{array}{lll}
k_1=2|\mu|\cos\Theta_1\cos\Theta_2, & & k_2=-2|\mu||u_0|\sin\Theta_1\cos\Theta_2, \\
k_3=q\,\text{Re}(v^{q-1}), & & k_4=q\,\text{Im}(v^{q-1}).
\end{array}
\]
The strategy is same as in Section~3.1, though we use the coordinates $(r_1,\theta_1,x_2,y_2)$ instead of
$(r_1,\theta_1,r_2,\theta_2)$, where $v=x_2+iy_2$.
We then apply the change of coordinates as in Section~3.1 into $(r_1',\theta_1',x_2',y_2')$ and 
calculate the Hessian of $\hat R$ at $z_0$ with variables $(\theta_1',x_2',y_2')$ as in Lemma~\ref{lemma35}.

Suppose that $q\geq 3$. Since $v_0=0$ at the singular point $z_0=(u_0,v_0)$, we have $D(z_0)=E(z_0)=F(z_0)=0$.
Therefore we have $\det H(z_0)=0$.
If $P$ is an excellent map, then the right $4\times 3$ minor of the Hessian of $\hat R$ with variables 
$(r_1',\theta_1',x_2',y_2')$ has rank $3$. Since any singularity $z_0$ of $P$ satisfies $\det H(z_0)=0$,
any singularity is a cusp, which is a contradiction. Hence $P$ is also not an excellent map.

Suppose that $q=2$. Since $\gcd(p-1,q-1)=1$, the singular set of $Q$ is connected and parametrized as
$(Ae^{i\theta},0)$ with $e^{i\theta}\in S^1$. 
Applying formula~\eqref{formula41} and $pA^{p-1}=p|u_0|^{p-1}=1$ in the first equation in Lemma~\ref{lemma31}, 
we have
\[
\begin{split}
\frac{d}{d\theta}P(Ae^{i\theta},0)
&=\mu\frac{d}{d\theta}(A^pe^{p\theta i}+Ae^{-\theta i})
=i \mu(pA^pe^{p\theta i}-Ae^{-\theta i})\\
&=-2\mu A e^{\frac{p-1}{2}\theta i}\sin\frac{p+1}{2}\theta.
\end{split}
\]
Therefore if $P$ is an excellent map then it has $p+1$ cusps. Thus the assertion follows.
\end{proof}

\subsection{Proof of the second assertion in Theorem~\ref{thm1}}

We now prove the second assertion in Theorem~\ref{thm1}, which states that
any fold singularity appearing in the proof of the first assertion is indefinite.
Since any excellent map in Theorem~\ref{thm1} has at least one cusp by Theorem~\ref{thm2},
it is enough to show that the fold singularities in a neighborhood of any cusp are indefinite.
We only need to check the case where $k_1\ne 0$ and $\hat R_{\theta_1'\theta_1'}(z_0)\ne 0$ discussed in Section~3.2
because a cusp appears in the proof only in this case.

For a singular point $z_0$ of $P$, let $\tau(z_0)$ be the absolute index of $z_0$,
which is defined as follows: Fix an orientation along the set of singularities.
Then the Hessian of $P$ at $z_0$ defines a symmetric, bilinear form on $L_{z_0}\otimes L_{z_0}$.
Let $i$ be the index of this form. Then the absolute index $\tau(z_0)$ is defined by the maximal number of $i$ and 
the rank of this Hessian minus $i$, see~\cite[p.273]{levine}.

Let $z_0$ be a cusp of $P$. The Hessian at $z_0$ had been calculated in Lemma~\ref{lemma310}
and we see that the rank is $2$ and the index is $1$.
Therefore the absolute index of $z_0$ is $1$.
Now we apply Lemma (2) in~\cite[p.274]{levine}.
Since the dimension $n$ of the source manifold is $4$, the equality $\tau(z_0)=(n-2)/2$ is satisfied.
Therefore the absolute index of a fold in a small neighborhood of $z_0$ is $2$.
Since the rank of the Hessian at a fold is $3$, we see that the index at the fold is neither $0$ nor $3$,
i.e., the fold is indefinite. This completes the proof.\qed

\section{Proof of Theorem~\ref{cor4}}


\begin{proof}[Proof of Theorem~\ref{cor4}]
As in the previous sections, we set $P(u,v;\mu)=\mu (u^2+\bar{u})+v^2+\bar{v}$.
We follow the observation in the beginning of Section~4.1 and~4.2 with the condition $p=q=2$.
Then we can see that $S(P)$ consists of one circle $C_0$.
This circle is parametrized as in equation~\eqref{eq41} with $p=q=2$, $r=1$ and $k=0$.
Then the map $P_0=P|C_0$ is given as
\[
 \begin{split}
   P_0(\theta)&=\mu
  \left(\dfrac{1}{4}e^{(2\theta-4\arg\mu)i}
  +\dfrac{1}{2}e^{-(\theta-2\arg\mu)i} \right)+
   \dfrac{1}{4}e^{2\theta i}+\dfrac{1}{2}e^{-\theta i} \\
&=\dfrac{|\mu|}{4}e^{(2\theta-3\arg\mu)i}
  +\dfrac{|\mu|}{2}e^{-(\theta-3\arg\mu)i}+
  \dfrac{1}{4}e^{2\theta i}+\dfrac{1}{2}e^{-\theta i} \\
  &=\frac{1}{4}\left(|\mu|e^{-3i\arg\mu}+1\right) 
e^{2i\theta}+\frac{1}{2}\left(|\mu|e^{3i\arg\mu}+1 \right) e^{-i\theta}.
\end{split}
\]
Set $K=r_\mu e^{-3i\arg\mu}+1$, then 
\[
 \begin{split}
    P(\theta)
    &=\frac{1}{4}\left(K e^{2i\theta}+2\bar{K} e^{-i\theta}\right) 
     =\frac{|K|}{4}\left(e^{i(2\theta+\arg K)}+2 e^{-i(\theta+\arg K)}\right)\\
    &=\frac{|K|}{4}e^{-\frac{i}{3}\arg K}h\left(\theta+\frac{2}{3}\arg K\right),
   \end{split}
\]
where $h(\theta)=e^{2i\theta}+2 e^{-i\theta}$ is the function in the assertion.
Obviously, $h:S^1\to\R^2$ is injective. Hence $P_0$ is also.
Solving $\frac{dh}{d\theta}=0$,
we can confirm that the cusps appear at $\theta=0$, $\frac{2}{3}\pi$ and $\frac{4}{3}\pi$.
Hence the number of cusps is $3$.
\end{proof}

\section{Examples}

In this section, we give two examples of linear deformations.

\begin{ex}
We consider the map $P(u,v;\mu)=\mu(u^3+\bar u)+v^2+\bar v$, which is a deformation of $u^3+v^2$.
If $\mu$ is generic then $P(u,v;\mu)$ is an excellent map as shown in Theorem~\ref{thm1}.
Figure ~\ref{fig1} is the set of critical values of $P(u,v;\mu)$ with $\arg\mu=0$.
The left figure is in the case $|\mu|<\frac{3\sqrt 3}{2}$ and the right one is when $|\mu|>\frac{3\sqrt 3}{2}$.
By Theorem~\ref{thm2}, if it is an excellent map then the number of cusps is between $4$ and $6$. 
The singularity appearing at $|\mu|=\frac{3\sqrt 3}{2}$ looks like a ``beak to beak''.
\begin{figure}[htbp]
\begin{center}
  \includegraphics[scale=1.0]{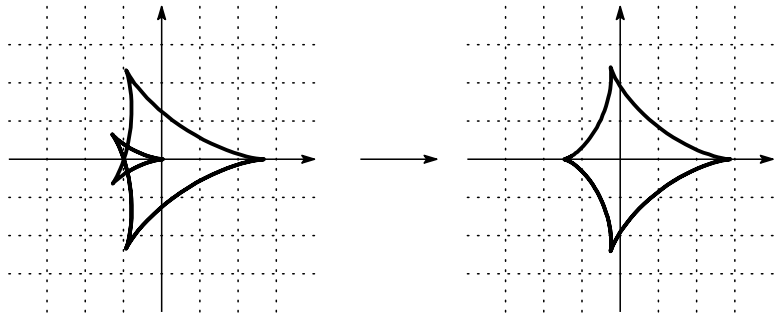}
  \caption{Linear deformations of $u^3+v^2$ into excellent maps.\label{fig1}}
\end{center}
\end{figure}
\end{ex}

\begin{ex}
We consider the map $P(u,v;\mu)=\mu(u^4+\bar u)+v^2+\bar v$, which is a deformation of $u^4+v^2$.
Figure ~\ref{fig2} is the set of critical values of $P(u,v;\mu)$ with $\arg\mu=0$.
The left figure is in the case $|\mu|<c$ and the right one is when $|\mu|>c$, where $c=2.615...$
By Theorem~\ref{thm2}, if it is an excellent map then the number of cusps is between $5$ and $9$. 
The singularities appearing at $|\mu|=c$ look like ``swallowtails''.
\begin{figure}[htbp]
\begin{center}
  \includegraphics[scale=1.0]{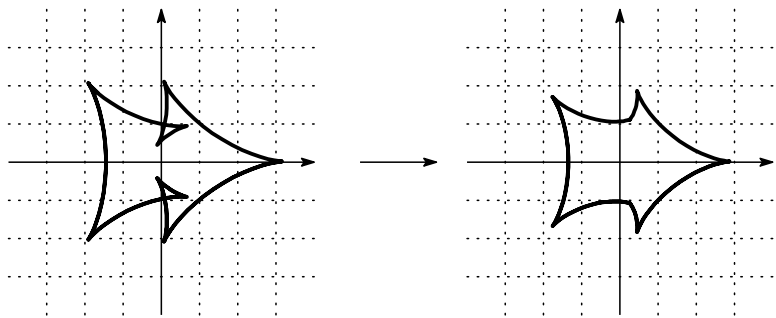}
  \caption{Linear deformation of $u^4+v^2$ into excellent maps.\label{fig2}}
\end{center}
\end{figure}
\end{ex}


%


\end{document}